\documentclass[22pt,epsf]{article}
\usepackage{color}
\setlength{\oddsidemargin}{-0.08in}
\setlength{\textheight}{9.0in}
\setlength{\textwidth}{6.5in}
\setlength{\topmargin}{-0.5in}
\usepackage{indentfirst}
\setlength{\parindent}{2em}

\newcommand{\bed}{\begin{displaymath}}
\newcommand{\eed}{\end{displaymath}}
\newcommand{\bea}{\bed\begin{array}{rl}}
\newcommand{\eea}{\end{array}\eed}
\newcommand{\barray}{\begin{array}{ll}}
\newcommand{\earray}{\end{array}}

\usepackage{color,latexsym,amsfonts,amssymb,amsmath,amsthm}
\usepackage{amsfonts,amsmath,epsfig,subfigure,cite}
\usepackage{graphicx}
\usepackage{epsfig}
\usepackage{float}
\usepackage{caption}
\usepackage{mathrsfs}
\usepackage{multirow}
\captionsetup{font={scriptsize}}

\newtheorem{theorem}{Theorem}[section]
\newtheorem{lemma}[theorem]{Lemma}
\newtheorem{remark}{Remark}[section]

\newtheorem{corollary}[theorem]{Corollary}

\newtheorem{assumption}{Assumption}[section]
\newtheorem{definition}{Definition}[section]

\numberwithin{equation}{section}

\bibliographystyle{plain}
\allowdisplaybreaks[4]
\begin{document}

\title{Approximation of Invariant Measures for Stochastic Differential Equations with Piecewise Continuous Arguments via Backward Euler Method\thanks{This work is supported by the NNSFC (NOs.  91630312, 11711530017 and 11871068) and National Postdoctoral Program for Innovative Talents (NO. BX20180347)}}
\author{Chuchu Chen,~~Jialin Hong,~~Yulan Lu \thanks{Corresponding author. Email: chenchuchu@lsec.cc.ac.cn,~hjl@lsec.cc.ac.cn,~yulanlu@lsec.cc.ac.cn
\vspace{6pt}}\\
\footnotesize{LSEC, ICMSEC, Academy of Mathematics and Systems Science, Chinese Academy of Sciences,}\vspace{-6pt}\\
\footnotesize{Beijing 100049, People's Republic of China}\\
\footnotesize{School of Mathematical Sciences, University of Chinese Academy of Sciences, }\vspace{-6pt}\\
\footnotesize{Beijing 100049, People's
Republic of China}}
\maketitle
\begin{abstract}
For the stochastic differential equation (SDE) which has piecewise continuous arguments (PCAs), is driven by multiplicative noises and its drift coefficients are dissipative, we show that the solution at integer time is  a Markov chain and admits a unique invariant measure. In order to inherit numerically the invariant measure of SDE with PCAs, we apply the backward Euler (BE) method to the equation, and prove that the numerical solution at integer time is not only Markovian but also reproduces a unique numerical invariant measure. We present the time-independent weak error analysis for the method  under certain hypothesis. Further, we show that  the numerical invariant measure converges to the original one with order 1. Numerical experiments verify the theoretical analysis.


\vskip 0.3 in \noindent {\bf Keywords:}
Invariant measure,  Markov property, Weak convergence, Backward Euler method, Stochastic differential equations with piecewise continuous arguments

\vskip 0.3 in \noindent {\bf Mathematics Subject Classifications (2010):}  60H35,  37M25, 65C30
 \end{abstract}
\section{Introduction}

Differential equations with piecewise continuous arguments (PCAs), which play an important role in biomedicine, physics, neural networks, control theory, etc,  represent a hybrid of continuous and discrete dynamical systems and thus combine properties of both differential and difference equations\cite{Wiener1993}.  A typical differential equation with PCAs is of the form
\begin{equation}\label{DEPCA}
X^\prime (t)=f(t,X(t),X(\alpha(t))),
\end{equation}
where $\alpha(t)$ (e.g., $\alpha(t)=[t]$) has intervals of constancy, and $[\cdot]$ denotes the greatest-integer function.
The discontinuity of $\alpha(t)$ may make \eqref{DEPCA} exhibit complex and extraordinary dynamical behavior, such as stability, oscillation, ergodicity, periodicity and chaos.  In practice, stochastic factors like environment noise or accidental events may greatly influence a system. Thus, stochastic differential equations (SDEs) with PCAs attract lots of attention. For the well-posedness, mean-square stability and almost sure stability of SDEs with PCAs, we refer to \cite{zhangling,Lu2017,Mao2016} and references therein.


There have been many works on the numerical approximations of SDEs with PCAs (see e.g., \cite{zhangling,Lu2017,Xie2019,mm,Lu2019}).
Under the local Lipschitz and the general Khasminskii-type conditions, authors in \cite{zhangling} prove the convergence of explicit Euler method in probability for SDEs with PCAs. Recently, the strong convergence of the split-step $\theta$ method is proved under a coupled monotone condition in \cite{Lu2017}, and the convergence rate is obtained with some polynomial growth conditions in \cite{Lu2019}.  For the time-dependent SDEs with PCAs, the strong convergence of the one-leg $\theta$ method is investigated in \cite {Xie2019}. In terms of stability, the split-step $\theta$ method and the one-leg $\theta$ method are proved to inherit the mean-square stability of the original  equations under some dissipative conditions in \cite{Lu2017} and \cite{Xie2019}, respetively.
Based on the convergence of the Euler method in finite time intervals, the equivalence between the mean-square exponential stability of a kind of retarded SDEs with PCAs and that of its Euler method with sufficiently small step-size is established in \cite{mm}.

As far as we know, besides stability, the invariant measure  also plays significant roles in describing  the long-time behavior of a dynamical system. For SDEs and stochastic partial differential equations (SPDEs), there have been plenty of works on invariant measure. Da Prato \cite{Prato1}  provides two common approaches which ensure that the semigroup generated by the solutions of SDEs or SPDEs admits a unique invariant measure, i.e., it is ergodic.  One is the  Krylov-Bogoliubov theorem together with Doob's theorem, which is efficient to deal with equations with non-degenerate noises. Another one is the remote start or dissipative method which usually deals with equations with general noises.  As for approximations of invariant measures,  one important topic is to construct numerical schemes to inherit the invariant measure and to give the error estimates between the numerical invariant measure and the original one, see, e.g., \cite{Talay1990,Chen2016} by Kolmogrov equation, \cite{Mattingly2010} via Poisson equation, \cite{Brehier2014,Cui2018} by Malliavin calculus, \cite{Hong2017} by generating functions.
For the error analysis of the invariant measures, we also mention that \cite{Vilmart2014} provides new sufficient conditions for a numerical method to approximate the invariant measure of an ergodic SDE with high order of accuracy, independently of the weak order of the method.

However,  to our best knowledge, there is no result on the invariant measure of both SDEs with PCAs and their numerical approximations. In this paper, our aim is to  make a contribution on that of the following SDE with PCAs
\begin{eqnarray}\label{eq}
\begin{cases}
dX(t)=f(X(t),X([t]))dt+g(X(t),X([t]))dB(t)\qquad t\geq 0,\\
 X(0)=x
\end{cases}
\end{eqnarray}
and its backward Euler (BE) method. Here $x\in \mathbb{R}^d $ is the initial value, $f: \mathbb{R}^d\times \mathbb{R}^d\rightarrow \mathbb{R}^d$, $g: \mathbb{R}^d\times \mathbb{R}^d\rightarrow \mathbb{R}^{d\times r}$  and $B(t)$ is an $r$-dimensional Brownian motion defined on a filtered complete  probability space   $(\Omega,\mathcal{F},\{\mathcal{F}_t\}_{t\geq 0}, \mathbb{P})$.
In this paper, we  concern with the following questions.
\begin{itemize}
  \item [(I)] Does the solution of Eq. \eqref{eq} admit an invariant measure? If it does, is it unique?
  \item [(II)] If Eq. \eqref{eq} admits a unique invariant measure, does the BE method reproduce a unique numerical invariant measure?
  \item [(III)] Does the numerical invariant measure, if it exists, converge to the original one ?
\end{itemize}

In order to answer the questions above, we assume that the drift coefficient $f$ is dissipative and the diffusion coefficient $g$ satisfies the global Lipschitz condition.
For stochastic functional differential equations (SFDEs) with either continuous or discrete delay arguments, it is well known that their solutions are non-Markovian because of the dependence on their history. However, the segment process of SFDEs with continuous arguments is proved to be Markovian \cite{Mohammed1984}. The invariant measure of SFDEs with continuous arguments has been studied extensively (see \cite{Bao2014,Prato1} and references therein). For \eqref{eq}, we prove that the solution $\{X(k)\}_{k\in\mathbb{N}}$ at integer time is a time-homogeneous Markov chain  under above assumptions. This reveals the influence of the discrete arguments and reflects the characteristic of  difference dynamics of \eqref{eq}. By proving the exponential convergence  of $\{X(k)\}_{k\in\mathbb{N}}$ in distribution and the continuous dependence on initial values of $\{X(k)\}_{k\in\mathbb{N}}$ under the dissipative condition,  we then obtain that the Markov chain $\{X(k)\}_{k\in\mathbb{N}}$ is exponentially ergodic with a unique invariant measure $\pi$.

Taking the divergence of explicit Euler method without the linear growth condition on drift coefficients into consideration, we apply the implicit BE method to discretize Eq. \eqref{eq}. Denoting $Y_k$ the BE approximation of $X(k)$ (i.e. $X(t)$ at integer time $t=k$), we show that $\{Y_k\}_{k\in\mathbb{N}}$ also possesses the time-homogenous Markov property. Then we prove that $\{Y_k\}_{k\in\mathbb{N}}$ is uniformly bounded in mean square sense and continuously dependent on initial values, which guarantees the existence and uniqueness of the numerical invariant measure $\pi^{\delta}$. Moreover, the transition probability measure of  $\{Y_k\}_{k\in\mathbb{N}}$  converges exponentially to $\pi^{\delta}$ as $k$ tends to infinity, i.e. the BE method preserves the exponential ergodicity of Eq. \eqref{eq}. The error between $\pi$ and $\pi^{\delta}$ is estimated via deducing the weak error between $X(k)$ and $Y_k$, which is required not only to be independent of $k$ but also to decay exponentially. The main difficulty  is to deriving several uniform priori estimations via Malliavin calculus.  Based on the weak error analysis, we show that $\pi^{\delta}$ converges to $\pi$ with order 1 which coincides with the weak convergence order of the BE method.

This paper is organized as follows. In Section 2, some notations are introduced and the solution of Eq. \eqref{eq} at integer time is proved to be a time-homogeneous Markov chain as well as  exponentially ergodic with a unique invariant measure. In Section 3, we apply the BE method to Eq. \eqref{eq} and prove that  the BE approximation at integer time preserves the exponential ergodicity with a unique numerical invariant measure. The time-independent weak error
of the solutions together with the error between invariant measures are given in Section 4. In Section 5, numerical experiments are presented to verify the theoretical results.

\section{Notations and Invariant Measures of the Solution}
To begin with, we introduce some notations. 
 Let $(\mathbb{R}^d,\langle \cdot,\cdot\rangle,\|\cdot\|)$ be a $d$-dimensional real Euclidean space. Given a matrix $A\in\mathbb{R}^{d\times r}$, its trace norm is defined as $\|A\|:=\sqrt{\text{trace}(A^TA)}$.  
Assume that $C^{1,2}(\mathbb{R}_+\times\mathbb{R}^d;\mathbb{R}_+)$ denotes the family of all real-valued functions $V(t,x)$ defined on $\mathbb{R}_+\times\mathbb{R}^d$ such that they are continuously twice differentiable in $x$ and once in $t$.
$B(x,r)$ denotes the open ball in $\mathbb{R}^d$ with center $x$ and radius $r>0$. Denote by $C_b(\mathbb{R}^d)$ (resp. $B_b(\mathbb{R}^d)$) the Banach space of all uniformly continuous and bounded mappings (resp. Borel bounded mappings) $\varphi: \mathbb{R}^d\rightarrow\mathbb{R}$
endowed with the norm $\|\varphi\|_0=\sup_{x\in\mathbb{R}^d}|\varphi(x)|$. For any $k\in \mathbb{N}$, $C_b^k(\mathbb{R}^d)$ is the subspace of $C_b(\mathbb{R}^d)$ consisting of all functions with bounded partial derivatives $D_x^{i}\varphi(x)$ for $1\leq i\leq k$ and with the norm $\|\varphi\|_k=\|\varphi\|_0+\sum_{i=1}^k\sup_{x\in\mathbb{R}^d}\|D_x^{i}\varphi(x)\|$. The notation $\mathscr{P}(\mathbb{R}^d)$ denotes the family of all probability measures on $(\mathbb{R}^d,\mathcal{B}(\mathbb{R}^d))$.
 For $a,b\in\mathbb{R}$, we denote $\max(a,b)$ and $\min(a,b)$ by $a\vee b$ and $a\wedge b$, respectively. We define $\inf\emptyset=\infty$ and denote by  $\mathbf{1}_{D}$ the indicative function of a set $D$.

Now, we make the following assumptions on the drift and diffusion coefficients.
   \begin{assumption}\label{assumption1}
      For any $R>0$, there exists a positive constant $K_R$ such that
        \begin{equation}\label{local}
          \left\| f(x_1,y_1)-f(x_2,y_2)\right\| ^2\vee\left\| g(x_1,y_1)-g(x_2,y_2)\right\| ^2\leq K_R(\|x_1-x_2\|^2+\|y_1-y_2\|^2)
        \end{equation}
        for any $x_1 ,y_1,x_2$, $y_2\in \mathbb{R}^d$ with $\left\| x_1\right\| \vee\left\| y_1\right\| \vee\left\| x_2\right\| \vee\left\| y_2\right\| \leq R$.
   \end{assumption}

\begin{assumption}\label{assumption2}
     There exist $\lambda_1, \lambda_2, \lambda_3>0$ such that for any $x_1 ,y_1,x_2$, $y_2\in \mathbb{R}^d$,
        \begin{equation}\label{monotone}
          \left\langle x_1-x_2,f(x_1,y)-f(x_2,y)\right\rangle \leq-\lambda_1\|x_1-x_2\|^2,
        \end{equation}
         \begin{equation}\label{muy}
       \left\| f(x,y_1)-f(x,y_2)\right\| ^2\leq \lambda_2\left\| y_1-y_2\right\| ^2
        \end{equation}
and
       \begin{equation}\label{sigma}
       \left\| g(x_1,y_1)-g(x_2,y_2)\right\| ^2\leq \lambda_3(\left\| x_1-x_2\right\| ^2+\left\|  y_1-y_2\right\| ^2).
      \end{equation}
   \end{assumption}
From Assumption \ref{assumption2}, for any $x,y\in\mathbb{R}^d$, we have
\begin{equation}\label{f-1}
\begin{split}
2\left\langle x,f(x,y)\right\rangle
=&2\left\langle  x-0,f(x,y)-f(0,y)\right\rangle +2\left\langle  x,f(0,y)\right\rangle
\\
\leq&-2\lambda_1\left\| x\right\| ^2+\|x\|^2+\|f(0,y)-f(0,0)+f(0,0)\|^2
\\
\leq&-(2\lambda_1-1)\|x\|^2+2\lambda_2\|y\|^2+2\|f(0,0)\|^2
\end{split}
\end{equation}
and
\begin{equation}\label{g-1}
\begin{split}
\left\| g(x,y)\right\| ^2
\leq&2\left\| g(x,y)-g(0,0)\right\| ^2+2\left\| g(0,0)\right\| ^2
\\
\leq&2\lambda_3\|x\|^2+2\lambda_3\|y\|^2+2\|g(0,0)\|^2.
\end{split}
\end{equation}

  Under Assumptions \ref{assumption1} and \ref{assumption2},  Eq. \eqref{eq}  admits a unique global solution $X(t)$ (see \cite[Theorem 3.1]{zhangling}). To demonstrate the dynamics of $\{X(k)\}_{k\in\mathbb{N}}$, for any $x\in\mathbb{R}^d$ and $B\in\mathcal{B}(\mathbb{R}^d)$, we define 
  $$P(x,B)=\mathbb{P}\{X(1)\in B|X(0)=x\}\quad \mathrm{and} \quad P_k(x,B)=\mathbb{P}\{X(k)\in B|X(0)=x\}.$$
Unless otherwise specified, we write $X^{k,x}(t)$ in lieu of  $X(t)$ to highlight the initial value $X(k)=x$.  Let us first verify that  $\{X(k)\}_{k\in\mathbb{N}}$ is indeed a Markov chain.

\begin{theorem}\label{Markov Property}
Suppose that Assumptions \ref{assumption1} and \ref{assumption2} hold. Then $\{X(k)\}_{k\in\mathbb{N}}$ is a time-homogeneous Markov chain with the transition probability kernel $P(x,B)$.
\end{theorem}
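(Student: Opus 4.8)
The plan is to exploit the fact that on each unit interval $[k,k+1)$ the argument $X([t])$ is frozen at the value $X(k)$, so that Eq. \eqref{eq} reduces to a classical (non-delayed) It\^o SDE. First I would fix a deterministic $x\in\mathbb{R}^d$ and consider, on $[0,1]$, the auxiliary equation
\begin{equation*}
d\xi^x(s)=f(\xi^x(s),x)\,ds+g(\xi^x(s),x)\,dW(s),\qquad \xi^x(0)=x,
\end{equation*}
driven by a standard $r$-dimensional Brownian motion $W$, where the second argument is held fixed at $x$. Under Assumptions \ref{assumption1} and \ref{assumption2} this coefficient pair is locally Lipschitz and dissipative, so the equation has a unique global strong solution; strong (pathwise) uniqueness then provides a Borel measurable solution map $F:\mathbb{R}^d\times C([0,1];\mathbb{R}^r)\to\mathbb{R}^d$ with $\xi^x(1)=F(x,W)$, and $F$ does not depend on the interval index because $f$ and $g$ carry no explicit time dependence.

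Next I would transfer this representation to the genuine process. On $[k,k+1)$ the solution of Eq. \eqref{eq} satisfies
\begin{equation*}
X(t)=X(k)+\int_k^t f(X(s),X(k))\,ds+\int_k^t g(X(s),X(k))\,dB(s),
\end{equation*}
an SDE whose initial datum and whose frozen second argument are both the $\mathcal{F}_k$-measurable random variable $X(k)$, driven by the shifted motion $\widetilde B_k(\cdot):=B(k+\cdot)-B(k)$. Since $\widetilde B_k$ is again a standard Brownian motion independent of $\mathcal{F}_k$, a standard freezing/substitution argument—solve first for every deterministic parameter value, then substitute the independent $\mathcal{F}_k$-measurable value $X(k)$, using pathwise uniqueness to identify the two solutions—yields
\begin{equation*}
X(k+1)=F\big(X(k),\widetilde B_k\big),
\end{equation*}
with the \emph{same} map $F$ for every $k$.

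Given this representation, the Markov property is routine. For $\varphi\in B_b(\mathbb{R}^d)$, since $X(k)$ is $\mathcal{F}_k$-measurable and $\widetilde B_k$ is independent of $\mathcal{F}_k$, the freezing lemma for conditional expectations gives
\begin{equation*}
\mathbb{E}\big[\varphi(X(k+1))\,\big|\,\mathcal{F}_k\big]=h(X(k)),\qquad h(x):=\mathbb{E}\big[\varphi\big(F(x,\widetilde B_k)\big)\big],
\end{equation*}
and because $\sigma(X(0),\dots,X(k))\subseteq\mathcal{F}_k$, conditioning further on the past changes nothing, which is exactly the Markov property. Time-homogeneity follows because $F$ is independent of $k$ and the laws of the increments $\widetilde B_k$ all coincide, so $h$ does not depend on $k$; taking $\varphi=\mathbf{1}_B$ identifies the one-step kernel as $P(x,B)=\mathbb{P}\{F(x,\widetilde B_k)\in B\}=\mathbb{P}\{X(1)\in B\mid X(0)=x\}$, matching the stated definition.

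I expect the main obstacle to be the rigorous construction of the measurable map $F$ together with the substitution step $X(k+1)=F(X(k),\widetilde B_k)$: one must handle that the frozen parameter and the initial condition are the \emph{same} random variable $X(k)$, invoke pathwise uniqueness to guarantee that solving-then-substituting coincides with the genuine solution of Eq. \eqref{eq} on $[k,k+1)$, and verify joint Borel measurability of $F$ in its two arguments (for instance via a Yamada--Watanabe type measurable-selection argument, or a jointly measurable Picard approximation). The remaining independence and freezing-lemma steps are standard once this representation is secured.
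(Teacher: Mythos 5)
Your proposal is correct and follows the same underlying strategy as the paper's proof: represent the state after time $k$ as a jointly measurable function of $X(k)$ and the post-$k$ Brownian increments, use uniqueness of solutions to justify substituting the random initial datum, apply the freezing lemma for conditional expectations, and deduce time-homogeneity from the stationarity and independence of Brownian increments. Where you genuinely diverge is in how the measurable representation is obtained. You freeze one unit interval at a time, observe that on $[k,k+1)$ Eq.~\eqref{eq} is a classical (non-delayed) It\^o SDE in which the frozen parameter and the initial value are the same point $x$, and invoke a path-space solution map $F:\mathbb{R}^d\times C([0,1];\mathbb{R}^r)\to\mathbb{R}^d$ built by Yamada--Watanabe or measurable Picard iteration; multi-step statements then follow by composing the one-step maps. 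The paper instead treats the PCA flow over an arbitrary number of intervals at once, defining $\Psi(y,\omega)=X^{k,y}(k+k',\omega)$, and proves joint $\mathcal{B}(\mathbb{R}^d)\otimes\mathcal{G}_{k+k',k}$-measurability directly: Assumption~\ref{assumption2} and Gronwall's inequality give the $L^2$ estimate \eqref{bounded in probability}, hence continuity in probability with respect to the initial value, and Theorem 3.1 of \cite{Gusak2010Theory} upgrades this to a jointly measurable modification. Your route buys access to off-the-shelf solution-map machinery for classical SDEs (at the price of making that borrowed machinery rigorous, which you correctly flag as the main obstacle), while the paper's route stays self-contained, needing only the dissipativity estimate it must prove anyway; both rest on the same independence-plus-uniqueness mechanism, so neither is materially more general than the other.
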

\begin{proof}
We divide this proof into two parts.

\textbf{(i) Time-homogeneity.} For $k,k'\in\mathbb{N}$, if $X(k')=x$, then 
\begin{equation*}\label{time-homogeneous1}
\begin{split}
X^{k',x}(k+k')=&x+\int_{k'}^{k+k'}f(X^{k',x}(s),X^{k',x}([s]))ds+\int_{k'}^{k+k'}g(X^{k',x}(s),X^{k',x}([s]))dB(s)
\\
=&x+\int_0^{k}\!\!f(X^{k',x}(u+k'),X^{k',x}([u]+k'))du+\int_{0}^{k}\!\!g(X^{k',x}(u+k'),X^{k',x}([u]+k'))d\widetilde{B}(u),
\end{split}
\end{equation*}
where $\widetilde{B}(u)=B(u+k')-B(k')$, $u\geq0$. In addition, if $X(0)=x$, then
$$X^{0,x}(k)=x+\!\int_0^{k}\!f(X^{0,x}(u),X^{0,x}([u]))du+\!\int_{0}^{k}\!g(X^{0,x}(u),X^{0,x}([u]))dB(u).$$
Since $\widetilde{B}(u)$ and $B(u)$ have the same distribution, by the weak uniqueness of the solution for Eq. \eqref{eq}, we obtain that
$X^{k',x}(k+k')$ and $X^{0,x}(k)$ are identical in probability law. Hence
$$\mathbb{P}\{X(k+k')\in B|X(k')=x\}=\mathbb{P}\{X(k)\in B|X(0)=x\}$$
for any $B\in\mathcal{B}(\mathbb{R}^d)$ and $x\in\mathbb{R}^d$, which means that $\{X^{0,x}(k)\}_{k\in\mathbb{N}}$ is time-homogeneous.

\textbf{(ii) Markov property.}
Define $\mathcal{G}_{t,s}=\sigma\{B(u)-B(s),~s\leq u\leq t\}\cup\mathcal{N}$, where $s,t>0$ and $\mathcal{N}$ denotes the collection of all $\mathbb{P}$-null sets in $\mathcal{F}$. The property of Brownian motion yields that $\mathcal{F}_s$ is independent of $\mathcal{G}_{t,s}$. For $k\in\mathbb{N}$, let $B^k(t):=B(t)-B(k)$,  $t\geq k$. Then $B^k(t)$ is $\mathcal{F}_t\cap\mathcal{G}_{t,k}$-measurable. Replacing $B(t)$ by $B^k(t)$ in Eq. \eqref{eq}, we get the unique solution $\{X^{k,y}(t)\}_{t\geq k}$, which is adapted to $\{\mathcal{F}_t\cap\mathcal{G}_{t,k}\}_{t\geq k}$. Thus,  $X^{k,y}(k+k')$ is independent of $\mathcal{F}_k$ for any $k,k'\in\mathbb{N}$ and $y\in\mathbb{R}^d$.

For any fixed $k,k'\in\mathbb{N}$, $y\in\mathbb{R}^d$, define $\Psi: \mathbb{R}^d\times \Omega\rightarrow\mathbb{R}^d$, $(y,\omega)\mapsto X^{k,y}(k+k',\omega)$. We claim that $\Psi$ is $\mathcal{B}(\mathbb{R}^d)\otimes\mathcal{G}_{k+k',k}$-measurable. Applying It\^{o}'s formula to $\mathbb{E}\left\| X^{k,z}(t)-X^{k,y}(t)\right\|^2$, $t\geq k$, we obtain
\begin{align*}
\mathbb{E}\left\| X^{k,z}(t)-X^{k,y}(t)\right\|^2
=&\left\| z-y\right\|^2
+\mathbb{E}\int_{k}^{t} \left\| g(X^{k,z}(s),X^{k,z}([s]))-g(X^{k,y}(s),X^{k,y}([s]))\right\| ^2ds\\
&+2\mathbb{E}\int_{k}^{t}\left\langle X^{k,z}(s)-X^{k,y}(s), f(X^{k,z}(s),X^{k,z}([s]))-f(X^{k,y}(s),X^{k,z}([s]))\right\rangle  ds\\
&+2\mathbb{E}\int_{k}^{t}\left\langle X^{k,z}(s)-X^{k,y}(s), f(X^{k,y}(s),X^{k,z}([s]))-f(X^{k,y}(s),X^{k,y}([s]))\right\rangle  ds.
\end{align*}
According to Assumption \ref{assumption2}, the equation above yields
\begin{equation*}
\begin{split}
\mathbb{E}\left\| X^{k,z}(t)-X^{k,y}(t)\right\|^2
\leq&\left\| z-y\right\|^2
-(2\lambda_1-1-\lambda_3)\mathbb{E}\int_{k}^{t}\left\|  X^{k,z}(s)-X^{k,y}(s)\right\|^2ds\\
&+(\lambda_2+\lambda_3)\mathbb{E}\int_{k}^{t}\left\|  X^{k,z}([s])-X^{k,y}([s])\right\|^2ds\\
\leq&\left\| z-y\right\|^2
+\lambda\int_{k}^{t}\sup_{k\leq u\leq s}\mathbb{E}\left\|  X^{k,z}(u)-X^{k,y}(u)\right\|^2ds,
\end{split}
\end{equation*}
where $\lambda:=|2\lambda_1-1-\lambda_3|+\lambda_2+\lambda_3$. By Gronwall's inequality, we have
\begin{equation}\label{bounded in probability}
\mathbb{E}\left\| X^{k,z}(t)-X^{k,y}(t)\right\|^2\leq e^{\lambda (t-k)}\left\| z-y\right\|^2,
\end{equation}
which implies that $\Psi$ is continuous in probability with respect to $y$, i.e., for any $\varepsilon>0$
$$\mathbb{P}\left\lbrace \omega\in\Omega: \left\| X^{k,z}(t,\omega)-X^{k,y}(t,\omega)\right\| >\varepsilon\right\rbrace \rightarrow 0,\qquad \mathrm{as}~z\rightarrow y.$$ Theorem 3.1 in \cite{Gusak2010Theory} implies that there is a modification $\widetilde{\Psi}$ of  $\Psi$ that is $\mathcal{B}(\mathbb{R}^d)\otimes\mathcal{G}_{k+k',k}$-measurable.
Therefore, $\varphi(X^{0,x}(k+k'))$  is $\mathcal{B}(\mathbb{R}^d)\otimes\mathcal{G}_{k+k',k}$-measurable for any $\varphi\in B_b(\mathbb{R}^d)$, where we used the uniqueness of the solution to Eq. \eqref{eq}, i.e.,
\begin{equation*}
X^{0,x}(k+k')=X^{k,X^{0,x}(k)}(k+k')=\widetilde{\Psi}(X^{0,x}(k)),\qquad a.s.
\end{equation*}

Combining the fact that $X^{0,x}(k)$ is $\mathcal{F}_k$-measurable, we have
$$ \mathbb{E}\left[ \varphi(X^{k,X^{0,x}(k)}(k+k'))|\mathcal{F}_k\right] =\mathbb{E}\left[ \varphi(X^{k,y}(k+k'))|\mathcal{F}_k\right]\Big|_{y=X^{0,x}(k)}$$
and
 \begin{equation*}
 \begin{split}
\mathbb{E}\left[ \varphi(X(k+k'))|\mathcal{F}_k\right]
  = \mathbb{E}\left[ \varphi(X^{k,y}(k+k'))\right]\Big|_{y=X(k)}
=\mathbb{E}\left[ \varphi(X(k+k'))|X(k)\right].
\end{split}
\end{equation*}
The proof is completed.
\end{proof}

\begin{theorem}\label{IM of eq}
  Under Assumptions \ref{assumption1} and \ref{assumption2}, if $\lambda_1-\lambda_2-2\lambda_3-1>0$, then the Markov chain $\{X(k)\}_{k\in\mathbb{N}}$ admits a unique invariant measure $\pi$ and there exist $C_1,\nu>0$ independent of $k$ and $x$ such that
  \begin{equation}\label{ exp-con-eq}
\left|\mathbb{E}\varphi(X^{0,x}(k))-\int_{\mathbb{R}^d}\varphi(x)\pi(dx)\right|\leq C_1e^{-\nu k}(1+\|x\|^2),\quad \forall ~\varphi \in C^1_b(\mathbb{R}^d).
\end{equation}
   \end{theorem}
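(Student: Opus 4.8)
The plan is to establish exponential ergodicity by the classical dissipative (remote-start) method, which requires two ingredients: a uniform-in-$k$ moment bound and a contraction-in-distribution estimate. The contraction estimate is essentially already in hand. Indeed, the argument leading to \eqref{bounded in probability} can be sharpened: under the strengthened dissipativity $\lambda_1-\lambda_2-2\lambda_3-1>0$, I would redo the It\^o computation for $\mathbb{E}\|X^{0,z}(t)-X^{0,y}(t)\|^2$ but keep the sign of the dissipative term, obtaining a differential inequality of the form
\begin{equation*}
\frac{d}{dt}\mathbb{E}\|X^{0,z}(t)-X^{0,y}(t)\|^2 \leq -(2\lambda_1-1-\lambda_3)\mathbb{E}\|X^{0,z}(t)-X^{0,y}(t)\|^2 +(\lambda_2+\lambda_3)\mathbb{E}\|X^{0,z}([t])-X^{0,y}([t])\|^2.
\end{equation*}
Evaluating this over a unit interval $[k,k+1]$ and using that $[t]=k$ is constant there, I would integrate to get a one-step recursion $a_{k+1}\leq \theta a_k$ for $a_k:=\mathbb{E}\|X^{0,z}(k)-X^{0,y}(k)\|^2$ with a contraction factor $\theta<1$ guaranteed precisely by the hypothesis $\lambda_1-\lambda_2-2\lambda_3-1>0$. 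Iterating yields the geometric decay
\begin{equation*}
\mathbb{E}\|X^{0,z}(k)-X^{0,y}(k)\|^2 \leq e^{-2\nu k}\|z-y\|^2
\end{equation*}
for some $\nu>0$, i.e. a Wasserstein-type contraction of the transition semigroup at integer times.

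Next I would derive the uniform moment bound. Applying It\^o's formula to $\|X^{0,x}(t)\|^2$ and invoking the a priori estimates \eqref{f-1} and \eqref{g-1}, I expect a differential inequality
\begin{equation*}
\frac{d}{dt}\mathbb{E}\|X^{0,x}(t)\|^2 \leq -(2\lambda_1-1-2\lambda_3)\mathbb{E}\|X^{0,x}(t)\|^2 +(2\lambda_2+2\lambda_3)\mathbb{E}\|X^{0,x}([t])\|^2 + C,
\end{equation*}
where $C$ collects the constant terms $2\|f(0,0)\|^2+2\|g(0,0)\|^2$. Again integrating over a unit interval with $[t]$ frozen gives a recursion $b_{k+1}\leq \rho\, b_k + C'$ with $\rho<1$, so that $\sup_k \mathbb{E}\|X^{0,x}(k)\|^2 \leq C_2(1+\|x\|^2)$ uniformly in $k$. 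Together with the contraction this shows $\{P_k(x,\cdot)\}$ is Cauchy in the Wasserstein-$2$ metric, hence converges to a limit measure $\pi$; the Chapman--Kolmogorov relation forces $\pi$ to be invariant, and the contraction forces uniqueness (any two invariant measures would have to coincide, since the distance between them is contracted by $\theta<1$ at each step).

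Finally, to pass from $L^2$ convergence of the laws to the weak estimate \eqref{ exp-con-eq} for $\varphi\in C_b^1(\mathbb{R}^d)$, I would write, using invariance of $\pi$ and a coupling/synchronous realization with initial law $\pi$,
\begin{equation*}
\left|\mathbb{E}\varphi(X^{0,x}(k))-\int_{\mathbb{R}^d}\varphi\,d\pi\right| = \left|\int_{\mathbb{R}^d}\bigl(\mathbb{E}\varphi(X^{0,x}(k))-\mathbb{E}\varphi(X^{0,y}(k))\bigr)\pi(dy)\right|,
\end{equation*}
then bound the integrand by $\|\varphi\|_1\,\mathbb{E}\|X^{0,x}(k)-X^{0,y}(k)\| \leq \|\varphi\|_1 e^{-\nu k}\|x-y\|$ via the Lipschitz property of $\varphi$ and Jensen's inequality applied to the contraction estimate. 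Integrating $\|x-y\|\leq \|x\|+\|y\|$ against $\pi$, and controlling $\int\|y\|\,\pi(dy)$ by the finite second moment of $\pi$ inherited from the uniform bound, produces the desired factor $e^{-\nu k}(1+\|x\|^2)$. I expect the main obstacle to be the bookkeeping in the two recursions — in particular verifying that the interplay between the frozen argument $X([t])$ on each unit interval and the dissipative decay genuinely yields contraction factors strictly below $1$ under the stated constant $\lambda_1-\lambda_2-2\lambda_3-1>0$, rather than merely boundedness; getting the constants to line up so that $\rho,\theta<1$ simultaneously is the delicate point, whereas the measure-theoretic conclusions (existence, uniqueness, and the weak bound) follow routinely once the two estimates are established.
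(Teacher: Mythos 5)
Your proposal is correct, and it reaches the theorem by a genuinely different construction from the paper's. Both arguments rest on the same two estimates --- the uniform second-moment bound and the synchronous-coupling contraction $\mathbb{E}\|X^{0,x}(k)-X^{0,y}(k)\|^2\leq \theta^k\|x-y\|^2$ --- each obtained by the identical It\^o computation followed by a unit-interval recursion with the delay argument frozen, and both recursions do close under the stated hypothesis (your worry about the constants is unfounded: $\theta<1$ needs only $2\lambda_1-\lambda_2-2\lambda_3-1>0$ and $\rho<1$ needs only $2\lambda_1-2\lambda_2-4\lambda_3-1>0$, both strictly weaker than $\lambda_1-\lambda_2-2\lambda_3-1>0$). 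The difference lies in how $\pi$ is produced. The paper uses the remote-start (dissipative) method: it extends the Brownian motion to negative times, shows that $\{X^{-k,x}(0)\}_{k\in\mathbb{N}}$ is Cauchy in $L^2(\Omega,\mathcal{F},\mathbb{P};\mathbb{R}^d)$, defines $\pi$ as the law of the limit random variable $\eta$, and reads the exponential bound directly from $\mathbb{E}\|X^{-k,x}(0)-\eta\|^2\leq Ce^{k\log\bar{r}(1)}(1+\|x\|^2)$ together with the equality in law of $X^{-k,x}(0)$ and $X^{0,x}(k)$. You instead run the chain forward and treat $P_k(x,\cdot)$ as a Cauchy sequence in the Wasserstein-$2$ metric, obtaining $\pi$ as the fixed point of the induced contraction on $\mathcal{P}_2(\mathbb{R}^d)$, and you derive the final bound from the invariance identity $\int\varphi\,d\pi=\int\mathbb{E}\varphi(X^{0,y}(k))\,\pi(dy)$ plus the finite moments of $\pi$; this even gives a slightly sharper bound, linear rather than quadratic in $\|x\|$. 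What the paper's route buys is that it never leaves $L^2(\Omega)$ --- no completeness of $(\mathcal{P}_2,W_2)$, no continuity of $\mu\mapsto\mu P$ is invoked --- whereas your route is more modular and is essentially the scheme the paper itself uses for the numerical chain in Theorem \ref{existence of BE} (there via tightness and Prokhorov rather than Wasserstein completeness). One caveat in your uniqueness step: the contraction argument identifies $\pi$ as the unique invariant measure only within $\mathcal{P}_2(\mathbb{R}^d)$, since an arbitrary invariant measure $\tilde{\pi}$ could a priori have infinite second moment, making $W_2(\tilde{\pi},\pi)$ infinite and the contraction inequality vacuous. The gap closes with ingredients you already have: $W_2$-convergence gives $P_k\varphi(x)\rightarrow\int\varphi\,d\pi$ for every $x$ and every $\varphi\in C_b(\mathbb{R}^d)$, so dominated convergence applied to $\int\varphi\,d\tilde{\pi}=\int P_k\varphi\,d\tilde{\pi}$ forces $\tilde{\pi}=\pi$ --- which is precisely the paper's own uniqueness argument.
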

\begin{proof}
{\bf{(i) Existence of invariant measures.}} Let $\alpha:=2\lambda_1-2\lambda_3-1$, $\beta:=2(\lambda_2+\lambda_3)$ and $\gamma:=2(\|f(0,0)\|^2+\|g(0,0)\|^2)$, then $\alpha>0$ and $\frac{\beta}{\alpha}<1$ since $\lambda_1-\lambda_2-2\lambda_3-1=\frac{1}{2}(\alpha-\beta-1)>0$.
Let $\{\tilde{B}(t)\}_{t\geq0}$ be another Brownian motion, independent of $\{B(t)\}_{t\geq0}$, defined on $(\Omega,\mathcal{F},\mathbb{P})$ and define
$$\bar{B}(t)=\begin{cases}B(t),\qquad t\geq0,\\
\tilde{B}(-t),~\quad t<0
\end{cases}$$
with the filtration $\bar{\mathcal{F}}_t:=\sigma{\{\bar{B}(s), s\leq t\}}$, $t\in\mathbb{R}$. For any $k\in\mathbb{N}$ and $x\in\mathbb{R}^d$, we consider the following equation
\begin{equation}\label{eq1}
\begin{cases}
dX(t)=f(X(t),X([t]))dt+g(X(t),X([t]))d\bar{B}(t),\quad t\geq -k,\\
X(-k)=x.
\end{cases}
\end{equation}
It can be verified that \eqref{eq1} admits a unique solution under Assumptions \ref{assumption1} and \ref{assumption2}. In what follows, we show the existence of invariant measure through three steps.

\textbf{Step 1. }  A priori estimate

For any $k\in\mathbb{N}$ and $t>-k$, applying It\^o's formula to $e^{\alpha t}\|X^{-k,x}(t)\|^2$, \eqref{f-1}-\eqref{g-1} lead to
\begin{equation*}\label{ito}
\begin{split}
e^{\alpha t}\mathbb{E}\|X^{-k,x}(t)\|^2=&e^{\alpha [t]}\mathbb{E}\|X^{-k,x}([t])\|^2+\alpha\mathbb{E}\int_{[t]}^te^{\alpha s}\|X^{-k,x}(s)\|^2ds
\\
&+\mathbb{E}\int_{[t]}^te^{\alpha s}\left(2\left\langle X^{-k,x}(s),f(X^{-k,x}(s),X^{-k,x}([s]))\right\rangle
+\|g(X^{-k,x}(s),X^{-k,x}([s]))\|^2\right)ds
\\
\leq&\left(e^{\alpha [t]}+\frac{\beta}{\alpha}\left(e^{\alpha t}-e^{\alpha [t]}\right)\right)\mathbb{E}\|X^{-k,x}([t])\|^2+\frac{\gamma}{\alpha}\left(e^{\alpha t}-e^{\alpha [t]}\right).
\end{split}
\end{equation*}
Hence
\begin{equation}\label{ito}
\begin{split}
\mathbb{E}\|X^{-k,x}(t)\|^2\leq&\left(\frac{\beta}{\alpha}+\left(1-\frac{\beta}{\alpha}\right)e^{-\alpha\{t\}}\right)\mathbb{E}\|X^{-k,x}([t])\|^2+\frac{\gamma}{\alpha}\left(1-e^{-\alpha \{t\}}\right),
\end{split}
\end{equation}
where $\{t\}=t-[t]$. Let $r(\{t\})=\frac{\beta}{\alpha}+\left(1-\frac{\beta}{\alpha}\right)e^{-\alpha\{t\}}$ and $F=\frac{\gamma}{\alpha}$, then $0<r(\{t\})<1$ and
\begin{equation*}
\begin{split}
\mathbb{E}\|X^{-k,x}(t)\|^2\leq& r(\{t\})\mathbb{E}\|X^{-k,x}([t])\|^2+F
\\
\leq&r(\{t\})r(1)\mathbb{E}\|X^{-k,x}([t]-1)\|^2+r(\{t\})F+F
\\
\leq&\cdots
\\
\leq&r(\{t\})e^{([t]+k)\log r(1)}\|x\|^2+\frac{1-r(1)^{[t]+k}}{1-r(1)}r(\{t\})F+F
\\
\leq&\frac{1}{r(1)}e^{(t+k)\log r(1)}\|x\|^2+\frac{1}{1-r(1)}F+F.
\end{split}
\end{equation*}
Since $\log r(1)<0$, there exists a positive constant $C$ independent of $k$ and $t$ such that
\begin{equation}\label{ito2}
\sup_{k\in\mathbb{N}}\mathbb{E}\|X^{-k,x}(t)\|^2\leq \frac{1}{r(1)}\|x\|^2+\frac{1}{1-r(1)}F+F\leq C(1+\|x\|^2).
\end{equation}

\textbf{Step 2.} For any $k_1,k_2\in\mathbb{N}$, $-k_1<-k_2\leq t<\infty$, let $Z(t)=X^{-k_1,x}(t)-X^{-k_2,x}(t)$, then
\begin{equation*}
\begin{split}
Z(t)=&Z(-k_2)
+\int_{-k_2}^t\left(f(X^{-k_1,x}(s),X^{-k_1,x}([s]))-f(X^{-k_2,x}(s),X^{-k_2,x}([s]))\right)ds\\
&+\int_{-k_2}^t\left(g(X^{-k_1,x}(s),X^{-k_1,x}([s]))-g(X^{-k_2,x}(s),X^{-k_2,x}([s]))\right)dB(s).
\end{split}
\end{equation*}
Similarly to Step 1, applying It\^o's formula to $e^{\alpha t}\mathbb{E}\|Z(t)\|^2$,  Assumption \ref{assumption2} leads to
\begin{equation*}
\begin{split}
\mathbb{E}\|Z(t)\|^2\leq\bar{r}(\{t\})\mathbb{E}\left\|Z([t])\right\|^2,
\end{split}
\end{equation*}
where $\bar{r}(\{t\})=\frac{\beta}{2\alpha}+\left(1-\frac{\beta}{2\alpha}\right)e^{-\alpha \{t\}}\in(0,1)$.  Furthermore, we derive
\begin{equation*}
\begin{split}
\mathbb{E}\|Z(t)\|^2\leq&\frac{1}{\bar{r}(1)}e^{(t+k_2)\log\bar{r}(1)}\mathbb{E}\left\|X^{-k_1,x}(-k_2)-x\right\|^2\leq Ce^{(t+k_2)\log\bar{r}(1)}\left(1+\|x\|^2\right).\end{split}
\end{equation*}
In particular,
\begin{equation*}\label{Cauchy}
\mathbb{E}\left\|X^{-k_1,x}(0)-X^{-k_2,x}(0)\right\|^2\leq Ce^{k_2\log\bar{r}(1)}\left(1+\|x\|^2\right),
\end{equation*}
which implies that $\{X^{-k,x}(0)\}_{k\in\mathbb{N}}$ is a Cauchy sequence in $L^2(\Omega,\mathcal{F},\mathbb{P};\mathbb{R}^d)$. Therefore, there exists $\eta^x\in L^2(\Omega,\mathcal{F},\mathbb{P};\mathbb{R}^d)$ such that
\begin{equation}\label{Feller3}
\lim_{k\rightarrow +\infty}\mathbb{E}\left\|X^{-k,x}(0)-\eta^x \right\|^2=0.
\end{equation}
Moreover, following the similar procedure, we obtain
\begin{equation}\label{independence on initial value}
\mathbb{E}\left\|X^{-k,x}(0)-X^{-k,y}(0)\right\|^2\leq\frac{1}{\bar{r}(1)}e^{k\log\bar{r}(1)}\|x-y\|^2.
\end{equation}
Combining \eqref{Feller3} and \eqref{independence on initial value}, we get
\begin{equation*}
\begin{split}
\mathbb{E}\left\|\eta^x-\eta^y\right\|^2=&\mathbb{E}\left\|\eta^x-X^{-k,x}(0)+X^{-k,x}(0)-X^{-k,y}(0)+X^{-k,y}(0)-\eta^y\right\|^2\\
\leq&3\lim_{k\rightarrow\infty}\left(\mathbb{E}\left\|\eta^x-X^{-k,x}(0)\right\|^2+\mathbb{E}\left\|X^{-k,x}(0)-X^{-k,y}(0)\right\|^2+\mathbb{E}\left\|X^{-k,y}(0)-\eta^y\right\|^2\right)\\
=&0.
\end{split}
\end{equation*}
This means that $\eta^x$ is independent of the initial value $x$, which is thus denoted by $\eta$. Furthermore,
\begin{equation}\label{Cauchy}
\mathbb{E}\left\|X^{-k_2,x}(0)-\eta\right\|^2=\lim_{k_1\rightarrow+\infty}\mathbb{E}\left\|X^{-k_2,x}(0)-X^{-k_1,x}(0)\right\|^2\leq Ce^{k_2\log\bar{r}(1)}\left(1+\|x\|^2\right),
\end{equation}
which indicates that $X^{-k,x}(0)$ converges to $\eta$ in distribution as $k\rightarrow \infty$. Since $X^{-k,x}(0)$ and $X^{0,x}(k)$ possess the same distribution, by the definition of convergence in distribution, the transition probabilities $P_k(x,\cdot)=\mathbb{P}\{X(k)\in \cdot |X(0)=x\}$ weakly converges to $\mathbb{P}\circ\eta^{-1}(\cdot)$ as $k\rightarrow \infty$.

\textbf{Step 3.} Denoting by $\pi:=\mathbb{P}\circ\eta^{-1}$ the probability measure induced by $\eta$,  we claim that $\pi$ is an invariant measure. In fact, for any $B\in\mathcal{B}(\mathbb{R}^d)$, the Chapman-Kolmogorov equation  leads to
\begin{equation*}
\begin{split}
\pi(B)=&\int_{\mathbb{R}^d}\mathbf{1}_{B}(y)\pi(dy)=\lim_{k\rightarrow+\infty}\int_{\mathbb{R}^d}\mathbf{1}_{B}(y)P_{k+1}(x,dy)\\
=&\lim_{k\rightarrow+\infty}\int_{\mathbb{R}^d}\int_{\mathbb{R}^d}\mathbf{1}_{B}(y)P(z,dy)P_{k}(x,dz)\\
=&\lim_{k\rightarrow+\infty}\int_{\mathbb{R}^d}P(z,B)P_{k}(x,dz)=\int_{\mathbb{R}^d}P(z,B)\pi(dz).
\end{split}
\end{equation*}

{\bf{(ii) Uniqueness of the invariant measure.}} Since $P_k(x,\cdot)$ weakly converges to $\pi$ as $k\rightarrow \infty$, for any $B\in\mathcal{B}(\mathbb{R}^d)$, $x\in\mathbb{R}^d$ and $k\in\mathbb{N}$, we get
$$\pi(B)=\int_{\mathbb{R}^d}\mathbf{1}_{B}(y)\pi(dy)=\lim_{k\rightarrow+\infty}\int_{\mathbb{R}^d}\mathbf{1}_{B}(y)P_k(x,dy)=\lim_{k\rightarrow+\infty}P_k(x,B).$$
Assume that $\tilde{\pi}\in\mathscr{P}(\mathbb{R}^d)$ is another invariant measure of $\{X(k)\}_{k\in\mathbb{N}}$, then for any $B\in\mathcal{B}(\mathbb{R}^d)$ and $k\in\mathbb{N}$, 
$$\tilde{\pi}(B)=\int_{\mathbb{R}^d}P_k(x,B)\tilde{\pi}(dx).$$
Letting $k\rightarrow \infty$, we obtain
$$\tilde{\pi}(B)=\lim_{k\rightarrow\infty}\int_{\mathbb{R}^d}P_k(x,B)\tilde{\pi}(dx)=\pi(B),$$
which implies that $\pi$ is the unique invariant measure of $\{X(k)\}_{k\in\mathbb{N}}$.

{\bf{(iii) }} For any $\varphi\in C_b^1(\mathbb{R}^d)$, \eqref{Cauchy} and $\pi=\mathbb{P}\circ\eta^{-1}$ lead to
 \begin{equation*}
 \begin{split}
\left|\mathbb{E}\varphi(X^{0,x}(k))-\int_{\mathbb{R}^d}\varphi(x)\pi(dx)\right|
\leq&\mathbb{E}\left|\varphi(X^{0,x}(k))-\varphi(\eta)\right|
\leq\|\varphi\|_1\cdot\mathbb{E}\left\|X^{0,x}(k)-\eta\right\|
\\
\leq&C_1\mathrm{e}^{-\nu k}(1+\|x\|^2),
\end{split}
\end{equation*}
where $C_1=\frac{\|\varphi\|_1}{\sqrt{\bar{r}(1)}}$ and $\nu=-\frac{1}{2}\log\bar{r}(1)$. The proof is completed.
\end{proof}

Besides a priori estimate in Theorem \ref{IM of eq}, we also present the uniform boundedness of $X(t)$ in $p$th ($p\geq1$) moment, which is crucial
to estimating the time-independent weak error of numerical methods.

\begin{lemma}\label{uniform-bounded-2p}
 Let Assumptions \ref{assumption1}-\ref{assumption2} hold and $p\geq 1$. If $\lambda_1-\lambda_2-2\lambda_3-1>4\lambda_3(p-1)$, then there exists a positive constant $C:=C(x,\lambda_1,\lambda_2,\lambda_3,p)>0$ independent of $t$ such that
 \begin{equation}\label{uniformly bounded-3}
 \mathbb{E}\|X(t)\|^{2p}\leq C.
 \end{equation}
  \end{lemma}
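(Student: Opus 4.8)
The plan is to mimic the a priori estimate in Step~1 of the proof of \thmref{IM of eq}, but carried out for the $2p$-th power instead of the square, exploiting again that the delayed argument $X([t])$ stays frozen on each interval $[[t],t]$. First I would apply It\^o's formula to $\|X(t)\|^{2p}$ on each such interval. Writing $\phi(x)=\|x\|^{2p}$, the drift part of the generator is $2p\|x\|^{2p-2}\langle x,f(x,y)\rangle$, which I would control by the dissipativity estimate \eqref{f-1}, while the It\^o correction, using $\nabla^2\phi=2p\|x\|^{2p-2}I+4p(p-1)\|x\|^{2p-4}xx^{T}$ and $\|g^{T}x\|^2\le\|g\|^2\|x\|^2$ together with \eqref{g-1}, is
\[
\tfrac12\tr\!\big(g g^{T}\nabla^2\phi\big)=p\|x\|^{2p-2}\|g(x,y)\|^2+2p(p-1)\|x\|^{2p-4}\|g(x,y)^{T}x\|^2\le p(2p-1)\|x\|^{2p-2}\|g(x,y)\|^2.
\]
The crucial point is that the Hessian produces the factor $p(2p-1)$, so the diffusion penalty on $\|x\|^{2p}$ is $2p(2p-1)\lambda_3$ rather than the $2p\cdot(2\lambda_3)$ of the quadratic case; the surplus $4(p-1)\lambda_3$ is precisely the quantity appearing on the right-hand side of the hypothesis.

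Collecting terms, I expect a bound of the form $\mathcal L\|x\|^{2p}\le -A_p\|x\|^{2p}+B_p\|x\|^{2p-2}\|y\|^2+D_p\|x\|^{2p-2}$ with $A_p=p[(2\lambda_1-1)-2(2p-1)\lambda_3]$ and $B_p=p[2\lambda_2+2(2p-1)\lambda_3]$, where $y=X([t])$. Next I would apply Young's inequality, $\|x\|^{2p-2}\|y\|^2\le\frac{p-1}{p}\|x\|^{2p}+\frac1p\|y\|^{2p}$ and $\|x\|^{2p-2}\le\varepsilon\|x\|^{2p}+C_\varepsilon$ with $\varepsilon$ small so the constant term does not spoil the decay, to reduce the generator to the frozen form $\mathcal L\|x\|^{2p}\le-\tilde\alpha_p\|x\|^{2p}+\tilde\beta_p\|y\|^{2p}+F_p$. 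Here $\|y\|^{2p}=\|X([t])\|^{2p}$ is constant on $[[t],t]$. The hypothesis $\lambda_1-\lambda_2-2\lambda_3-1>4\lambda_3(p-1)$ is what ensures $\tilde\alpha_p>0$ and $\tilde\beta_p/\tilde\alpha_p<1$, i.e. that the frozen term is strictly dominated by the decay.

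With this in hand, integrating the exponentially weighted process $e^{\tilde\alpha_p t}\|X(t)\|^{2p}$ over $[[t],t]$ and dividing by $e^{\tilde\alpha_p t}$ yields a one-step recursion $\mathbb E\|X(t)\|^{2p}\le r_p(\{t\})\,\mathbb E\|X([t])\|^{2p}+F_p$ with a contraction factor $r_p(\{t\})\in(0,1)$ of the same shape as $r(\{t\})$ in \thmref{IM of eq}, where $\{t\}=t-[t]$. Iterating this recursion over the integer times $[t],[t]-1,\dots,0$ produces a geometric series whose sum is bounded uniformly in $t$, which delivers \eqref{uniformly bounded-3} with a constant $C$ depending on $x$ and $\lambda_1,\lambda_2,\lambda_3,p$ but not on $t$.

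I expect the main obstacle to be the generator computation for the non-quadratic Lyapunov function $\|x\|^{2p}$: one must evaluate the Hessian correctly, bound $\|g^{T}x\|^2$ sharply by $\|g\|^2\|x\|^2$ to recover the factor $p(2p-1)$, and then balance the Young splittings so that the resulting contraction rate stays below the threshold guaranteed by the hypothesis. Once $r_p(1)<1$ is established, the iteration is routine and parallels Step~1 of \thmref{IM of eq}.
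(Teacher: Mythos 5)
Your proposal is correct, but it takes a genuinely different route from the paper's proof of this lemma. The paper proceeds by induction on $p$: after It\^o's formula on $[0,t]$ and the same drift/Hessian estimates (your factor $p(2p-1)$ and your constants are exactly right), it absorbs the lower-order term $\|X(s)\|^{2p-2}$ using the induction hypothesis $\mathbb{E}\|X(s)\|^{2(p-1)}\leq C$, arrives at an integral inequality involving $\sup_{0\leq r\leq s}\mathbb{E}\|X(r)\|^{2p}$, and closes the argument by invoking the comparison Lemmas 8.1 and 8.2 of It\^o--Nisio, checking $\alpha_3(p)>\beta_3(p)>0$ under the stated hypothesis. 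You instead avoid induction altogether: the lower-order term is absorbed by the Young splitting $\|x\|^{2p-2}\leq\varepsilon\|x\|^{2p}+C_\varepsilon$ with $\varepsilon$ small, and the delayed sup-term is handled by the frozen-argument exponential-weighting recursion on $[[t],t]$ followed by a geometric iteration over integer times --- i.e.\ the direct generalization of Step~1 of Theorem~\ref{IM of eq}, which the paper itself only uses for the base case $p=1$. In fact your constants match the paper's: $\tilde\beta_p=\beta_3(p)=2\lambda_2+2\lambda_3(2p-1)$ and $\tilde\alpha_p=\alpha_3(p)-\varepsilon D_p$, and since the hypothesis gives $\alpha_3(p)-\beta_3(p)=p\left[2\lambda_1-2\lambda_2-1-4\lambda_3(2p-1)\right]>p$, any $\varepsilon<p/D_p$ makes your contraction condition $\tilde\beta_p/\tilde\alpha_p<1$ hold. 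What each approach buys: the paper's is shorter given the It\^o--Nisio lemmas are already cited elsewhere, and its induction keeps the constant term sharp; yours is self-contained (no external comparison lemmas, no induction), makes the contraction structure at integer times explicit --- which is the same structure used elsewhere in the paper for ergodicity --- and as a by-product gives exponential decay of the dependence on the initial condition, not just a uniform bound.
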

  \begin{proof}
 Theorem \ref{IM of eq} (i) implies that the assertion \eqref{uniformly bounded-3} holds for the case $p=1$.  Thus, it suffices to consider the case $p>1$, which is proved by the  induction.

 We assume that  there exists $C>0$ independent of $t$ such that \eqref{uniformly bounded-3} holds for all $p'=1,2,\cdots,p-1$, then we show $\mathbb{E}\|X(t)\|^{2p}\leq C$.
  Applying It\^o's formula to $\mathbb{E}\|X(t)\|^{2p}$,  \eqref{f-1} and \eqref{g-1} lead to
   \begin{equation}\label{uniformly bounded-4}
   \begin{split}
 \mathbb{E}\|X(t)\|^{2p}\leq&\|x\|^{2p}+2p\mathbb{E}\int_0^t\|X(s)\|^{2p-2}\langle X(s),f(X(s),X([s]))\rangle ds\\
 &+p(2p-1)\mathbb{E}\int_0^t\|X(s)\|^{2p-2}\|g(X(s),X([s]))\|^2ds\\
 \leq&\|x\|^{2p}-p(2\lambda_1-2\lambda_3-1-4\lambda_3(p-1))\mathbb{E}\int_0^t\|X(s)\|^{2p}ds\\
 &+2p\left(\lambda_2+\lambda_3(2p-1)\right)\mathbb{E}\int_0^t\|X(s)\|^{2p-2}\|X([s])\|^{2}ds\\
 &+2p\left(\|f(0,0)\|^2+(2p-1)\|g(0,0)\|^2\right)\mathbb{E}\int_0^t\|X(s)\|^{2p-2}ds.
  \end{split}
 \end{equation}
Using Young's inequality and the assumption $\mathbb{E}\|X(t)\|^{2(p-1)}\leq C$, we obtain
 \begin{equation*}\label{uniformly bounded-5}
   \begin{split}
 \mathbb{E}\|X(t)\|^{2p}\leq&\|x\|^{2p}-\alpha_3(p)\int_0^t\mathbb{E}\|X(s)\|^{2p}ds
 +\int_0^t\left(\gamma_3(p)+\beta_3(p)\mathbb{E}\|X([s])\|^{2p}\right)ds,
  \end{split}
 \end{equation*}
 where $\alpha_3(p)=2\lambda_1p-2\lambda_2p+2\lambda_2-p-2\lambda_3(2p-1)^2$,
  $\gamma_3(p)=2p\left(\|f(0,0)\|^2+(2p-1)\|g(0,0)\|^2\right)C$ and $\beta_3(p)=2\lambda_2+2\lambda_3(2p-1)$.  In addition,
 \begin{equation}\label{uniformly bounded-6}
   \begin{split}
 \mathbb{E}\|X(t)\|^{2p}\leq&\|x\|^{2p}-\!\alpha_3(p)\!\int_0^t\mathbb{E}\|X(s)\|^{2p}ds
 +\!\int_0^t\!\left(\gamma_3(p)+\beta_3(p)\sup_{0\leq r\leq s}\mathbb{E}\|X(r)\|^{2p}\right)ds.
  \end{split}
 \end{equation}
According to \cite[Lemma 8.1]{Ito}, we have
 $$ \sup_{0\leq s\leq t}\mathbb{E}\|X(s)\|^{2p}\leq\|x\|^{2p} +\int_0^t e^{-\alpha_3(p)(t-s)}\left(\gamma_3(p)+\beta_3(p)\sup_{0\leq r\leq s}\mathbb{E}\|X(r)\|^{2p}\right)ds.$$
Due to $2\lambda_1-2\lambda_2-4\lambda_3-1>8\lambda_3(p-1)$, it can be verified that $\alpha_3(p)>\beta_3(p)>0$. Thus, \cite[Lemma 8.2]{Ito} leads to
 \begin{equation*}
   \mathbb{E}\|X(t)\|^{2p}\leq\frac{\gamma_3(p)+\alpha_3(p)\|x\|^{2p}}{\alpha_3(p)-\beta_3(p)}=:C.
 \end{equation*}
The proof is completed.
   \end{proof}
\section{Invariant Measures of the Backward Euler Method}
 Let $\delta=\frac{1}{m}$ be the given step-size with integer $m\geq 1$. Grid points $t_n$ are defined as $t_n=n\delta,~n=0,1,\cdots.$ The backward Euler (BE) method for \eqref{eq} is given by	
   \begin{equation*}
      X_{n+1}= X_{n}+ \delta f(X_{n+1},X_{[n\delta]m})+g(X_{n},X_{[n\delta]m})\Delta B_{n},
   \end{equation*}
   where $X_0=x$, $\Delta B_{n}=B(t_{n+1})-B(t_{n})$, $X_n$ is the approximation to $X(t_n)$ and $X_{[n\delta]m}$ is the approximation to $X([t_n])$. Since, for arbitrary $n=0,1,2,\cdots$, there exist $k\in\mathbb{N}$ and $l=0,1,2,...m-1$ such that $n=km+l$, the BE method can be written as
 \begin{equation}\label{Xn}
      X_{km+l+1}= X_{km+l}+ \delta f(X_{km+l+1},X_{km})+g(X_{km+l},X_{km})\Delta B_{km+l}.
   \end{equation}

 Under the condition \eqref{monotone},   the implicit BE method admits a unique solution $\{X_{km+l}: l=0,1,\cdots,m-1,k\in\mathbb{N}\}$ for all step-sizes. 
 Rewrite \eqref{Xn} as
 \begin{equation}\label{Xn1}
      X_{km+l+1}- \delta f(X_{km+l+1},X_{km})= X_{km+l}+g(X_{km+l},X_{km})\Delta B_{km+l}.
   \end{equation}
For any $a\in\mathbb{R}^d$ and $\delta\in(0,1)$, define the mapping $G: \mathbb{R}^d\rightarrow\mathbb{R}^d$, $x\mapsto x-\delta f(x,a)$. Then $G$ admits its inverse function $G^{-1}: \mathbb{R}^d\rightarrow\mathbb{R}^d$. Moreover, the numerical solution $X_{km+l+1}$ satisfies
\begin{equation}\label{Xn2}
      X_{km+l+1}= G^{-1}(X_{km+l}+g(X_{km+l},X_{km})\Delta B_{km+l})
   \end{equation}
for all $k\in\mathbb{N}$ and $l=0,1,2,\cdots,m-1$.

In order to investigate that whether the BE method inherits the Markov property and admits a unique numerical invariant measure, we denote by $Y_k:=X_{km}$ the solution of BE method at $t=k$, $k\in\mathbb{N}$ and define
  $$P^\delta(x,B)=\mathbb{P}\{Y(1)\in B|Y(0)=x\}\quad \mathrm{and}\quad P_k^\delta(x,B)=\mathbb{P}\{Y(k)\in B|Y(0)=x\},$$
where  $x\in\mathbb{R}^d$ and $B\in\mathcal{B}(\mathbb{R}^d)$. Similarly to $X^{0,x}(k)$, we write $Y_k^{0,x}$ in lieu of  $Y_k$ to highlight the initial value $Y_{0}=x$. Now, let us proceed to show the Markov property of $\{Y_k\}_{k\in\mathbb{N}}$.

  \begin{theorem}\label{Markov chain-nm}
Assume that Assumptions \ref{assumption1} and \ref{assumption2} hold. Then  $\{Y_k\}_{k\in\mathbb{N}}$ is a time-homogeneous Markov chain with the transition probability $P^\delta(x,B)$.
   \end{theorem}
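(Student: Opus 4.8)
The plan is to mirror the structure of the proof of \thmref{Markov Property}, but to exploit the fact that the piecewise continuous argument freezes the delayed term on each integer block of $m$ substeps. The key observation is that over the block carrying $Y_k=X_{km}$ to $Y_{k+1}=X_{(k+1)m}$, the recursion \eqref{Xn2} reads
\[
X_{km+l+1}=G^{-1}\big(X_{km+l}+g(X_{km+l},Y_k)\,\Delta B_{km+l}\big),\qquad l=0,1,\dots,m-1,
\]
where $G(x)=x-\delta f(x,Y_k)$ depends on the whole past only through $Y_k$. Consequently there is a single map $\Phi_\delta\colon\mathbb{R}^d\times(\mathbb{R}^r)^m\to\mathbb{R}^d$, obtained by composing the $m$ substep maps with the common frozen argument $Y_k$, such that
\[
Y_{k+1}=\Phi_\delta\big(Y_k,\Delta B_{km},\Delta B_{km+1},\dots,\Delta B_{km+m-1}\big).
\]

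First I would record that $\Phi_\delta$ is Borel measurable, indeed continuous. For fixed $a$, the monotonicity condition \eqref{monotone} gives $\langle G(x_1)-G(x_2),x_1-x_2\rangle\geq(1+\delta\lambda_1)\|x_1-x_2\|^2$, so $G^{-1}$ is globally Lipschitz with constant $(1+\delta\lambda_1)^{-1}$ and depends continuously on the frozen argument; combined with the local Lipschitz continuity of $f$ and $g$ from Assumption \ref{assumption1}, each substep map $(x,a,w)\mapsto G^{-1}(x+g(x,a)w)$ is jointly continuous, and a finite composition of continuous maps is continuous. This is the discrete analogue of the measurability step that, in the continuous-time proof, required passing to a modification via \cite{Gusak2010Theory}; here it is immediate because only finitely many operations are involved, so none of the continuity-in-probability subtleties arise.

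Next I would verify the two assertions using the elementary independence structure of the Brownian increments. For time-homogeneity, the increments $(\Delta B_{km},\dots,\Delta B_{km+m-1})$ are i.i.d. $N(0,\delta I_r)$ for every $k$, so their joint law does not depend on $k$; since the same $\Phi_\delta$ governs every block, the one-step kernel $P^\delta(x,B)=\mathbb{P}\{\Phi_\delta(x,\Delta B_0,\dots,\Delta B_{m-1})\in B\}$ is independent of $k$. For the Markov property, the vector $(\Delta B_{km},\dots,\Delta B_{km+m-1})$ is independent of $\mathcal{F}_{t_{km}}=\mathcal{F}_k$, whereas $Y_k$ is $\mathcal{F}_k$-measurable; applying the standard independence (freezing) lemma to $h=\varphi\circ\Phi_\delta$ with $\varphi\in B_b(\mathbb{R}^d)$ yields
\[
\mathbb{E}\big[\varphi(Y_{k+1})\mid\mathcal{F}_k\big]=\mathbb{E}\big[\varphi(\Phi_\delta(y,\Delta B_{km},\dots,\Delta B_{km+m-1}))\big]\Big|_{y=Y_k}=\mathbb{E}\big[\varphi(Y_{k+1})\mid Y_k\big],
\]
which is precisely the Markov property with kernel $P^\delta$. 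Iterating block by block identifies the $k$-step kernel as $P_k^\delta$.

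The main obstacle I anticipate is the clean bookkeeping needed to define $\Phi_\delta$ and to confirm that the delayed argument genuinely stays frozen at $Y_k$ throughout the block $km\le n<(k+1)m$: this is exactly what decouples the block from $\mathcal{F}_k$ and makes the independence lemma applicable. Once the piecewise continuous freezing is made explicit and the Lipschitz continuity of $G^{-1}$ (hence of $\Phi_\delta$) is in hand, both time-homogeneity and the Markov property reduce to the fact that the fresh increments are i.i.d. and independent of the past, so the remaining work is routine verification rather than any substantial new estimate.
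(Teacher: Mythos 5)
Your proposal is correct, and its skeleton coincides with the paper's proof: your map $\Phi_\delta$ is exactly the paper's $G_m$, obtained by composing the $m$ substep maps \eqref{Xn2} over one integer block (the freezing of the delayed argument at $Y_k$ throughout the block, which you emphasize, is the same fact the paper extracts from uniqueness of the implicit scheme), and both arguments conclude by noting that the block of increments $(\Delta B_{km},\dots,\Delta B_{km+m-1})$ has a law independent of $k$ (time-homogeneity) and is independent of $\mathcal{F}_k$ while $Y_k$ is $\mathcal{F}_k$-measurable, so the freezing lemma gives the Markov property. The one point where you genuinely diverge is the measurability of the composed map. The paper settles it by repeating Step (ii) of \thmref{Markov Property}: continuity in probability of $y\mapsto Y^{k,y}_{k+1}$ together with the modification theorem of \cite{Gusak2010Theory}, so joint measurability is obtained only after passing to a modification. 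You instead derive it deterministically: the monotonicity condition \eqref{monotone} yields $\left\langle G(x_1)-G(x_2),x_1-x_2\right\rangle\geq(1+\delta\lambda_1)\left\|x_1-x_2\right\|^2$, hence $\left\|G(x_1)-G(x_2)\right\|\geq(1+\delta\lambda_1)\left\|x_1-x_2\right\|$, so $G^{-1}$ is globally Lipschitz, and (using \eqref{muy} and \eqref{sigma}) each substep map $(x,a,w)\mapsto G^{-1}(x+g(x,a)w)$ is jointly continuous, whence $\Phi_\delta$ is continuous as a finite composition. This is a real simplification: it is self-contained, avoids all modification/null-set bookkeeping, and exploits precisely what distinguishes the discrete scheme from the SDE, namely that only finitely many operations are involved. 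What the paper's route buys is structural parallelism with the continuous-time theorem, where the stochastic argument is unavoidable; what yours buys is a shorter, purely deterministic measurability step.
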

 \begin{proof} \textbf{(i) Time-homogeneity.}
For $k\in\mathbb{N}$, if $Y_k=x$, i.e., $X_{km}=x$, then from \eqref{Xn2}, it follows
$$X_{km+1}^{km,x}=G^{-1}(x+g(x,x)\Delta B_{km}).$$
Define the mapping $G_1: \mathbb{R}^d\times\mathbb{R}^m\rightarrow\mathbb{R}^d$, $(y,z)\mapsto G^{-1}(y+g(y,x)z)$, then $X_{km+1}^{km,x}=G_1(x,\Delta B_{km})$.  In addition, if $Y_0=X_0=x$, then
$$X_{1}^{0,x}=G^{-1}(x+g(x,x)\Delta B_{0})=G_1(x,\Delta B_{0}).$$
Since $\Delta B_{km}$ and $\Delta B_{0}$ are identical in probability law, $X_{km+1}^{km,x}$ and $X_1^{0,x}$ possess the same distribution.

Again, by \eqref{Xn2}, we have
\begin{equation*}
\begin{split}
X_{km+2}^{km,x}=&G^{-1}(X_{km+1}^{km,x}+g(X_{km+1}^{km,x},x)\Delta B_{km+1})
\\
=&G^{-1}(G_1(x,\Delta B_{km}))+g(G_1(x,\Delta B_{km}),x)\Delta B_{km+1})
\end{split}
\end{equation*}
and
\begin{equation*}
\begin{split}
X_{2}^{0,x}=&G^{-1}(X_{1}^{0,x}+g(X_{1}^{0,x},x)\Delta B_{1})=G^{-1}(G_1(x,\Delta B_{0}))+g(G_1(x,\Delta B_{0}),x)\Delta B_{1}).
\end{split}
\end{equation*}
Therefore, there exists a function $G_2:\mathbb{R}^d\times\mathbb{R}^m\times\mathbb{R}^m\rightarrow\mathbb{R}^d$ such that
$$X_{km+2}^{km,x}=G_2(x,\Delta B_{km},\Delta B_{km+1})$$
and
$$X_{2}^{0,x}=G_2(x,\Delta B_{0},\Delta B_{1}).$$
Since $(\Delta B_{km}$, $\Delta B_{km+1})$ and $(\Delta B_{0}$, $\Delta B_{1})$ have the same distribution, $X_{km+2}^{km,x}$ and $X_2^{0,x}$  are identical in probability law.

By the same procedure as above,  there exists a function $G_m$ such that
\begin{equation}\label{Y_k+1}
Y_{k+1}^{k,x}=X_{(k+1)m}^{km,x}=G_m(x,\Delta B_{km},\Delta B_{km+1},\cdots,\Delta B_{km+m-1})
\end{equation}
and
$$Y_1^{0,x}=X_{m}^{0,x}=G_m(x,\Delta B_{0},\Delta B_{1},\cdots,\Delta B_{m-1}).$$
Since $(\Delta B_{km}, \Delta B_{km+1}, \cdots, \Delta B_{km+m-1})$ and $(\Delta B_{0},\Delta B_{1},\cdots,\Delta B_{m-1})$ have the same distribution, $Y_{k+1}^{k,x}$ and $Y_1^{0,x}$  are also identical in probability law. Hence
$$\mathbb{P}\{Y_{k+1}\in B|Y_{k}=x\}=\mathbb{P}\{Y_{1}\in B|Y_0=x\}$$
for any $B\in\mathcal{B}(\mathbb{R}^d)$. Further, for any $k,k'\in\mathbb{N}$, we have
$$\mathbb{P}\{Y_{k+k'}\in B|Y_{k'}=x\}=\mathbb{P}\{Y_{k}\in B|Y_0=x\},$$
which implies the time-homogeneous property.

\textbf{(ii) Markov property.}
By the uniqueness of the numerical solution of  \eqref{Xn}, we have
$$Y^{0,x}_{k+1}=X^{0,x}_{(k+1)m}=X^{km,X^{0,x}_{km}}_{(k+1)m}=Y^{k,Y^{0,x}_{k}}_{k+1},\qquad a.s.$$
For $k\in\mathbb{N}$, define $\bar{\mathcal{G}}_{k+1,k}:=\sigma\{\Delta B_{km+l},l=0,1,2,\cdots,m-1\}$. Then $\bar{\mathcal{G}}_{k+1,k}$
is independent of $\mathcal{F}_k$.  From \eqref{Y_k+1}, we know that $Y_{k+1}^{k,y}$ is $\bar{\mathcal{G}}_{k+1,k}$-measurable, and thus is independent of $\mathcal{F}_k$. Using similar techniques as Step 2 in Theorem \ref{Markov Property},  we obtain that $Y^{k,\cdot}_{k+1}$ is $\mathcal{B}(\mathbb{R}^d)\otimes \bar{\mathcal{G}}_{k+1,k}$-measurable. Since $Y^{0,x}_k$ is $\mathcal{F}_k$-measurable,   \begin{equation*}
 \begin{split}
\mathbb{E}\left[ \varphi(Y_{k+1})|\mathcal{F}_k\right]=&\mathbb{E}\left[ \varphi(Y^{k,Y_k}_{k+1})|\mathcal{F}_k\right]
= \mathbb{E}\left[ \varphi(Y^{k,y}_{k+1})\right]\Big|_{y=Y_k}
=\mathbb{E}\left[ \varphi(Y_{k+1})|Y_k\right],
\end{split}
\end{equation*}
 which is the required Markov property. Further, the same procedure yields
\begin{equation*}
 \begin{split}
\mathbb{E}\left[ \varphi(Y_{k+k'})|\mathcal{F}_k\right]=\mathbb{E}\left[ \varphi(Y_{k+k'})|Y_k\right].
\end{split}
\end{equation*}
 The proof is completed.
 \end{proof}

Before we present the existence and uniqueness of the invariant measure of $\{Y_{k}\}_{k\in\mathbb{N}}$, we prepare several lemmas, including the mean square boundedness  and the dependence on initial data of the numerical solution.
\begin{lemma}\label{Inequality 1}
For a nonnegative sequence $Z_{km+l+1}$, if there exist $\alpha>\beta>0$, $\gamma>0$ such that $1-\alpha \delta>0$ and
\begin{equation}\label{zz}
Z_{km+l+1}\leq(1-\alpha \delta)Z_{km+l}+\beta \delta Z_{km}+\gamma\delta
\end{equation}
for $k\in\mathbb{N}$, $l=0,1,\cdots,m-1$, then
\begin{equation}\label{inequality 2}
  Z_{km+l+1}\leq\left(\frac{\beta}{\alpha}+\left(1-\frac{\beta}{\alpha}\right)e^{-\alpha(l+1)\delta}\right)Z_{km}+\frac{\gamma}{\alpha}.
\end{equation}
\end{lemma}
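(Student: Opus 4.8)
The plan is to treat \eqref{zz} as a discrete Gronwall inequality within each fixed block of indices. First I would fix $k\in\mathbb{N}$ and regard $A:=Z_{km}\geq 0$ as a constant throughout the block $l=0,1,\dots,m-1$. Setting $q:=1-\alpha\delta$, the hypothesis $1-\alpha\delta>0$ together with $\alpha>0$ gives $0<q<1$, and the recursion \eqref{zz} collapses to the scalar linear form $Z_{km+l+1}\leq q\,Z_{km+l}+c$ with the single constant $c:=\beta\delta A+\gamma\delta$. The whole problem is then to iterate this one-step contraction.

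Next I would unroll the recursion in $l$. Iterating $l+1$ times and summing the resulting geometric series yields
\begin{equation*}
Z_{km+l+1}\leq q^{\,l+1}A+c\sum_{j=0}^{l}q^{\,j}=q^{\,l+1}A+c\,\frac{1-q^{\,l+1}}{1-q}.
\end{equation*}
Since $1-q=\alpha\delta$, the coefficient simplifies as $\frac{c}{1-q}=\frac{\beta\delta A+\gamma\delta}{\alpha\delta}=\frac{\beta}{\alpha}A+\frac{\gamma}{\alpha}$, so after regrouping the terms proportional to $A$ I obtain the intermediate bound
\begin{equation*}
Z_{km+l+1}\leq\left(\frac{\beta}{\alpha}+\Big(1-\frac{\beta}{\alpha}\Big)q^{\,l+1}\right)A+\frac{\gamma}{\alpha}\big(1-q^{\,l+1}\big).
\end{equation*}
This already has the target shape, with $q^{\,l+1}=(1-\alpha\delta)^{l+1}$ playing the role of the exponential factor.

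Finally I would convert the geometric factor into the exponential one via the elementary inequality $1-x\leq e^{-x}$, which gives $(1-\alpha\delta)^{l+1}\leq e^{-\alpha(l+1)\delta}$. The step that needs care is checking that this substitution preserves the inequality in the correct direction: because $\alpha>\beta>0$ forces $1-\frac{\beta}{\alpha}>0$ and because $A=Z_{km}\geq0$ by the nonnegativity assumption, enlarging $q^{\,l+1}$ to $e^{-\alpha(l+1)\delta}$ only increases the coefficient of $A$, as desired. For the constant term, since $q^{\,l+1}>0$ and $\gamma>0$ I simply bound $\frac{\gamma}{\alpha}(1-q^{\,l+1})\leq\frac{\gamma}{\alpha}$. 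Recalling $A=Z_{km}$ then produces exactly \eqref{inequality 2}. There is no real analytic obstacle here; the only places demanding attention are the bookkeeping in the geometric summation and the sign conditions $\alpha>\beta$ and $Z_{km}\geq 0$ that guarantee the exponential majorization runs the right way.
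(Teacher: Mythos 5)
Your proof is correct and follows essentially the same route as the paper's: unroll the one-step recursion into a geometric series, simplify using $1-q=\alpha\delta$, and then majorize $(1-\alpha\delta)^{l+1}$ by $e^{-\alpha(l+1)\delta}$, using $1-\frac{\beta}{\alpha}>0$ and $Z_{km}\geq0$ to keep the inequality in the right direction. The only difference is cosmetic: your write-up is cleaner, and in fact the paper's final display contains a sign typo ($e^{\alpha(l+1)\delta}$ where $e^{-\alpha(l+1)\delta}$ is meant) that your argument gets right.
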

\begin{proof}
From $1-\alpha h>0$ and \eqref{zz}, it follows
\begin{equation}\label{inequality 21}
\begin{split}
Z_{km+l+1}
 \leq&(1-\alpha \delta)^{l+1}Z_{km}+\frac{\beta}{\alpha}\left(1-(1-\alpha\delta)^{l+1}\right)Z_{km}+\frac{\gamma}{\alpha}\left(1-(1-\alpha\delta)^{l+1}\right)
  \\
=&\left(\frac{\beta}{\alpha}+\left(1-\frac{\beta}{\alpha}\right)(1-\alpha \delta)^{l+1}\right)Z_{km}+\frac{\gamma}{\alpha}\left(1-(1-\alpha\delta)^{l+1}\right)
\\
\leq&\left(\frac{\beta}{\alpha}+\left(1-\frac{\beta}{\alpha}\right)e^{\alpha(l+1) \delta}\right)Z_{km}+\frac{\gamma}{\alpha},
 \end{split}
\end{equation}
where in the last step we use  $1-\frac{\beta}{\alpha}>0$ and $1-\alpha\delta>0$.
This completes the proof.
\end{proof}

\begin{lemma}\label{BE-uniform-bounded-2p}
 Let conditions in Lemma \ref{uniform-bounded-2p} hold. Then there exists $C:=C(x,\lambda_1,\lambda_2,\lambda_3,p)>0$ independent of $\delta$, $k$ and $l$ such that
  \begin{equation}\label{BE-uniformly bounded-0}
 \mathbb{E}\|X_{km+l+1}\|^{2p}\leq C
 \end{equation}
for $k\in\mathbb{N}$, $l=0,1,\cdots,m-1$ and $\delta\in(0,\delta_0)$ with $\delta_0$ being sufficiently small.
  \end{lemma}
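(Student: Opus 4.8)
The plan is to derive a one-step recursion for $\mathbb{E}\|X_{km+l+1}\|^{2p}$ of exactly the form \eqref{zz} required by Lemma \ref{Inequality 1}, and then iterate. I would proceed by induction on $p$, paralleling the continuous-time argument in Lemma \ref{uniform-bounded-2p}. The base case $p=1$ should follow from the mean-square boundedness that will be established (the case $p=1$ of this very lemma, or directly), so the substance is the inductive step: assuming $\sup\mathbb{E}\|X_{km+l}\|^{2(p-1)}\leq C$, show $\sup\mathbb{E}\|X_{km+l+1}\|^{2p}\leq C$.

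First I would rewrite the implicit scheme \eqref{Xn1} as $X_{km+l+1}-\delta f(X_{km+l+1},X_{km}) = X_{km+l}+g(X_{km+l},X_{km})\Delta B_{km+l}$, take inner products to isolate $\|X_{km+l+1}\|^2$, and raise to the $p$th power. Concretely, writing $a=X_{km+l+1}$, $b=X_{km+l}+g(X_{km+l},X_{km})\Delta B_{km+l}$, the relation $a=b+\delta f(a,X_{km})$ gives
\begin{equation*}
\|a\|^2 = \langle a,b\rangle + \delta\langle a, f(a,X_{km})\rangle.
\end{equation*}
Using the dissipativity \eqref{monotone} together with \eqref{f-1}, the term $\delta\langle a, f(a,X_{km})\rangle$ contributes a favorable $-\lambda_1\delta\|a\|^2$ (strictly, $-(2\lambda_1-1)\delta\|a\|^2/2$) plus controllable terms in $\|X_{km}\|^2$ and constants, which after a Young inequality yields a bound of the shape $\|a\|^2(1+c\delta)\le \|b\|^2 + (\text{good terms})$, hence $\|X_{km+l+1}\|^2 \le (1-\alpha_0\delta)\|b\|^2 + \cdots$ for small $\delta$. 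Raising to the $p$th power, expanding $\|b\|^2=\|X_{km+l}\|^2+2\langle X_{km+l},g\,\Delta B_{km+l}\rangle+\|g\,\Delta B_{km+l}\|^2$, and taking expectations, the martingale cross-term vanishes while $\mathbb{E}\|g\,\Delta B_{km+l}\|^2=\delta\,\mathbb{E}\|g\|^2$ and the higher moments of $\Delta B_{km+l}$ produce factors of $\delta^j$ with $j\ge 1$. The lower-order moments $\mathbb{E}\|X_{km+l}\|^{2p-2j}$ arising from the binomial expansion are bounded by the induction hypothesis, so all of them collapse into the constant term $\gamma\delta$ and the coefficient $\beta\delta$ in \eqref{zz}.

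The main obstacle, and the step demanding real care, is bookkeeping the constants so that the resulting recursion genuinely satisfies $\alpha>\beta>0$ and $1-\alpha\delta>0$ under the hypothesis $\lambda_1-\lambda_2-2\lambda_3-1>4\lambda_3(p-1)$, uniformly in $k,l,\delta$. The gain coefficient $\alpha$ comes from the dissipative drift and must dominate $\beta$, which collects all $\|X_{km}\|^{2p}$-type contributions from both $f$ (via \eqref{muy}) and $g$ (via \eqref{sigma}); the constant $2p-1$ multiplying $\lambda_3$ in the diffusion expansion is precisely what forces the $4\lambda_3(p-1)$ threshold, mirroring $\alpha_3(p),\beta_3(p)$ in Lemma \ref{uniform-bounded-2p}. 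A subtlety absent in the continuous case is that the implicitness couples $\|X_{km+l+1}\|$ on both sides, so I must absorb the $\delta\|X_{km+l+1}\|^2$ term to the left and verify the resulting factor $(1+c\delta)^{-1}$ expands as $1-c\delta+\bigO(\delta^2)$ without spoiling the sign of $\alpha$ for $\delta\in(0,\delta_0)$. Once the recursion \eqref{zz} is in hand, Lemma \ref{Inequality 1} gives $\mathbb{E}\|X_{km+l+1}\|^{2p}\le r\,\mathbb{E}\|X_{km}\|^{2p}+\gamma/\alpha$ with $r=\beta/\alpha+(1-\beta/\alpha)e^{-\alpha(l+1)\delta}\in(0,1)$; iterating over $k$ as in \eqref{ito2} yields a geometric bound whose limit is finite and independent of $\delta,k,l$, completing the induction.
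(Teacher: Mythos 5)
Your proposal is correct and follows the same overall architecture as the paper's proof: establish the case $p=1$ by taking the inner product of \eqref{Xn} with $X_{km+l+1}$ and invoking \eqref{f-1}--\eqref{g-1}, induct on the moment order, reduce each inductive step to a one-step recursion of the form \eqref{zz}, and conclude via Lemma \ref{Inequality 1} together with iteration over $k$. Where you genuinely deviate is in how the inductive step is executed. The paper keeps the implicit energy identity \eqref{Xn-3}, multiplies it by $\|X_{km+l+1}\|^{2(p'-1)}$, and estimates the resulting terms $R_1,R_2,R_3$ with specifically tuned Young parameters $\epsilon_1,\epsilon_2$ (this is where its constants $\alpha_4,\beta_4$ and the smallness restriction on $\delta$ arise), stepping through $p'=2,3,\dots,p$ one power at a time. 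You instead solve the implicit relation pathwise first, obtaining
\begin{equation*}
\|X_{km+l+1}\|^2\leq \bigl(1+(2\lambda_1-1)\delta\bigr)^{-1}\left(\|b\|^2+2\lambda_2\delta\|X_{km}\|^2+2\delta\|f(0,0)\|^2\right),\qquad b:=X_{km+l}+g(X_{km+l},X_{km})\Delta B_{km+l},
\end{equation*}
and then raise to the $p$th power and expand, using the martingale property and the Gaussian moments of $\Delta B_{km+l}$. This buys you something: the expansion reproduces exactly the continuous-time constants $\alpha_3(p),\beta_3(p)$ of Lemma \ref{uniform-bounded-2p} (the coefficient $p+2p(p-1)=p(2p-1)$ on the diffusion terms emerges from the binomial expansion just as from It\^o's formula), so the threshold $\lambda_1-\lambda_2-2\lambda_3-1>4\lambda_3(p-1)$ enters identically and the $\epsilon$-juggling of the paper disappears. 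One bookkeeping caveat in your write-up: the terms carrying two or more powers of the noise are \emph{not} lower-order in $X$, since by \eqref{g-1} the factor $\|g\|^{2j}$ grows like $\|X_{km+l}\|^{2j}+\|X_{km}\|^{2j}$ and the total order stays $2p$; these terms therefore do not collapse into $\gamma\delta$ via the induction hypothesis as you state, but are $O(\delta^2)$ perturbations of the coefficients $\alpha,\beta$, harmless precisely because $\delta<\delta_0$. The induction hypothesis is genuinely needed only for the inhomogeneous terms carrying $\|f(0,0)\|^2$ and $\|g(0,0)\|^2$ against $(2p-2)$th moments, which is also how the paper uses it; this imprecision does not affect the validity of your argument.
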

  \begin{proof}
{\bf{Case 1.}} If $p=1$, then taking the inner product of
 \eqref{Xn} with $X_{km+l+1}$, we get
    \begin{equation}\label{Xn-3}
     \begin{split}
   &\left\|X_{km+l+1}\right\|^2-\left\|X_{km+l}\right\|^2+\left\|X_{km+l+1}-X_{km+l}\right\|^2\\
     =& 2\delta\left\langle X_{km+l+1}, f(X_{km+l+1},X_{km})\right\rangle+2\left\langle X_{km+l+1},g(X_{km+l},X_{km})\Delta B_{km+l}\right\rangle.
      \end{split}
   \end{equation}
From  \eqref{f-1} and \eqref{g-1}, it follows
\begin{equation}\label{boundedness-BE1}
\begin{split}
\mathbb{E}\left\|X_{km+l+1}\right\|^2\leq&\mathbb{E}\left\|X_{km+l}\right\|^2
+\delta\mathbb{E}\left\|g(X_{km+l},X_{km})\right\|^2
+2\delta\mathbb{E} \left\langle X_{km+l+1},f(X_{km+l+1},X_{km})\right\rangle
\\
\leq&(1+2\lambda_3\delta)\mathbb{E}\left\|X_{km+l}\right\|^2-\delta(2\lambda_1-1)\mathbb{E}\left\|X_{km+l+1}\right\|^2
\\
&+2\delta(\lambda_2+\lambda_3)\mathbb{E}\left\|X_{km}\right\|^2+2\delta(\|f(0,0)\|^2+\|g(0,0)\|^2).
\end{split}
\end{equation}
Let $\alpha_1:=\frac{2\lambda_1-2\lambda_3-1}{1+\left(2\lambda_1-1\right)\delta}$, $\beta_1:=\frac{2(\lambda_2+\lambda_3)}{1+\left(2\lambda_1-1\right)\delta}$ and $\gamma_1:=\frac{2(\|f(0,0)\|^2+\|g(0,0)\|^2)}{1+\left(2\lambda_1-1\right)\delta}$. Then
\begin{equation}\label{boundedness-BE2}
\begin{split}
\mathbb{E}\left\|X_{km+l+1}\right\|^2
\leq&(1-\alpha_1\delta)\mathbb{E}\left\|X_{km+l}\right\|^2
+\beta_1\delta\mathbb{E}\left\|X_{km}\right\|^2
+\gamma_1\delta.
\end{split}
\end{equation}
Since $\lambda_1-\lambda_2-2\lambda_3-1>0$, we have $0<\alpha_1\delta<1$ and $\frac{\beta_1}{\alpha_1}<1$ for any $\delta\in(0,1)$. By Lemma \ref{Inequality 1}, \eqref{boundedness-BE2} yields
\begin{equation}\label{boundedness-BE21}
\begin{split}
\mathbb{E}\left\|X_{km+l+1}\right\|^2
\leq&\left(\frac{\beta_1}{\alpha_1}+\left(1-\frac{\beta_1}{\alpha_1}\right)e^{-\alpha_1\delta(l+1)}\right)\mathbb{E}\left\|X_{km}\right\|^2
+\frac{\gamma_1}{\alpha_1}=:r_1(l)\mathbb{E}\left\|X_{km}\right\|^2
+\frac{\gamma_1}{\alpha_1}.
\end{split}
\end{equation}
Here $0<r_1(l)<1$ for $l=0,1,2,\cdots,m-1$. If $l=m-1$, then
$$\mathbb{E}\left\|X_{(k+1)m}\right\|^2\leq r_1(m-1)\mathbb{E}\left\|X_{km}\right\|^2+\frac{\gamma_1}{\alpha_1}.$$
Let $F_1:=\left(\frac{\beta_1}{\alpha_1}+\left(1-\frac{\beta_1}{\alpha_1}\right)e^{-(2\lambda_1-2\lambda_3-1)}\right)^{-1}$ and $F_2:=\left(1-\frac{\beta_1}{\alpha_1}-\left(1-\frac{\beta_1}{\alpha_1}\right)e^{-\frac{2\lambda_1-2\lambda_3-1}{2\lambda_1}}\right)^{-1}$. Then
\begin{equation}\label{boundedness-BE3}
\begin{split}
\mathbb{E}\left\|X_{km+l+1}\right\|^2
\leq&r_1(l)(r_1(m-1))^k\left\|x\right\|^2+\frac{\gamma_1}{\alpha_1}+\frac{\gamma_1}{\alpha_1}r_1(l)\left(1+r_1(m-1)+\cdots+(r(m-1))^k\right)
\\
\leq&\frac{1}{r_1(m-1)}e^{(km+l+1)\delta\log r_1(m-1)}\|x\|^2+\frac{\gamma_1}{\alpha_1}\cdot\frac{2-r_1(m-1)}
{1-r_1(m-1)}
\\
\leq&F_1\|x\|^2+\frac{\gamma_1}{\alpha_1}\left(1+F_2\right)=:C,
\end{split}
\end{equation}
where we use  $\delta\in(0,1)$ and $0<r_1(l)<1$ for $l=0,1,2,\cdots,m-1$.

 {\bf{Case 2.}}  For $p>1$, we show the assertion \eqref{BE-uniformly bounded-0} by  induction. Since $\lambda_1-\lambda_2-2\lambda_3-1>0$,  Case 1 implies that $ \mathbb{E}\|X_{km+l+1}\|^{2p'}\leq C$ holds with $p'=1$. 
   Multiplying \eqref{Xn-3} by $\left\|X_{km+l+1}\right\|^2$ and taking expectation yield $LHS=RHS$, where
     \begin{equation*}
     \begin{split}
   LHS=&\mathbb{E}\left\|X_{km+l+1}\right\|^4-\mathbb{E}\left\|X_{km+l+1}\right\|^2\left\|X_{km+l}\right\|^2+\mathbb{E}\left\|X_{km+l+1}\right\|^2\left\|X_{km+l+1}-X_{km+l}\right\|^2\\
   =&\frac{1}{2}\mathbb{E}\left(\left\|X_{km+l+1}\right\|^4-\left\|X_{km+l}\right\|^4+\left(\left\|X_{km+l+1}\right\|^2-\left\|X_{km+l}\right\|^2\right)^2\right)\\
   &+\mathbb{E}\left\|X_{km+l+1}\right\|^2\left\|X_{km+l+1}-X_{km+l}\right\|^2
      \end{split}
   \end{equation*}
   and
   \begin{align*}
   RHS
     =& 2\delta\mathbb{E}\left\|X_{km+l+1}\right\|^2\left\langle X_{km+l+1}, f(X_{km+l+1},X_{km})\right\rangle\\
    &+2\mathbb{E}\left\|X_{km+l+1}\right\|^2\left\langle X_{km+l+1},g(X_{km+l},X_{km})\Delta B_{km+l}\right\rangle\\
     =& 2\delta\mathbb{E}\left\|X_{km+l+1}\right\|^2\left\langle X_{km+l+1}, f(X_{km+l+1},X_{km})\right\rangle\\
     &+2\mathbb{E}\left\|X_{km+l+1}\right\|^2\left\langle X_{km+l+1}-X_{km+l},g(X_{km+l},X_{km})\Delta B_{km+l}\right\rangle\\
     &+2\mathbb{E}\left\|X_{km+l+1}\right\|^2\left\langle X_{km+l},g(X_{km+l},X_{km})\Delta B_{km+l}\right\rangle\\
     =:&R_1+R_2+R_3.
   \end{align*}
For term $R_1$, by \eqref{f-1} and $2ab\leq a^2+b^2$, we have
 \begin{align*}
R_1
\leq&-\left(2\lambda_1-1\right)\delta\mathbb{E}\left\|X_{km+l+1}\right\|^4+2\lambda_2\delta\mathbb{E}\left\|X_{km+l+1}\right\|^2\left\|X_{km+l}\right\|^2+2\|f(0,0)\|^2\delta\mathbb{E}\left\|X_{km+l+1}\right\|^2\\
\leq&-\left(2\lambda_1-1-\lambda_2\right)\delta\mathbb{E}\left\|X_{km+l+1}\right\|^4+\lambda_2\delta\mathbb{E}\left\|X_{km+l}\right\|^4+2\|f(0,0)\|^2\delta\mathbb{E}\left\|X_{km+l+1}\right\|^2.
   \end{align*}
For terms $R_2$ and $R_3$,   according to \eqref{g-1} and $2ab\leq \epsilon a^2+\frac{1}{\epsilon}b^2$, $\epsilon>0$, we obtain
   \begin{align*}
R_2
\leq&\mathbb{E}\|X_{km+l+1}\|^2\|X_{km+l+1}-X_{km+l}\|^2
+\mathbb{E}\|X_{km+l}\|^2\|g(X_{km+l},X_{km})\Delta B_{km+l}\|^2\\
&+\mathbb{E}\left(\|X_{km+l+1}\|^2-\|X_{km+l}\|^2\right)\|g(X_{km+l},X_{km})\Delta B_{km+l}\|^2\\
\leq&\mathbb{E}\|X_{km+l+1}\|^2\|X_{km+l+1}-X_{km+l}\|^2+\frac{\epsilon_1}{4}\mathbb{E}\left(\|X_{km+l+1}\|^2-\|X_{km+l}\|^2\right)^2\\
&+\frac{1}{\epsilon_1}\mathbb{E}\|g(X_{km+l},X_{km})\Delta B_{km+l}\|^4+\mathbb{E}\|X_{km+l}\|^2\|g(X_{km+l},X_{km})\Delta B_{km+l}\|^2\\
\leq&\mathbb{E}\|X_{km+l+1}\|^2\|X_{km+l+1}-X_{km+l}\|^2+\frac{\epsilon_1}{4}\mathbb{E}\left(\|X_{km+l+1}\|^2-\|X_{km+l}\|^2\right)^2\\
&+\frac{3}{\epsilon_1}\delta^2\mathbb{E}(2\lambda_3\|X_{km+l}\|^2+2\lambda_3\|X_{km}\|^2+2\|g(0,0)\|^2)^2\\
&+2\lambda_3\delta\mathbb{E}\|X_{km+l}\|^4+2\lambda_3\delta\mathbb{E}\|X_{km+l}\|^2\|X_{km}\|^2+2\|g(0,0)\|^2\delta\mathbb{E}\|X_{km+l}\|^2\\
=&\mathbb{E}\|X_{km+l+1}\|^2\|X_{km+l+1}-X_{km+l}\|^2+\frac{\epsilon_1}{4}\mathbb{E}\left(\|X_{km+l+1}\|^2-\|X_{km+l}\|^2\right)^2\\
&+\left(\frac{24\lambda_3^2\delta^2}{\epsilon_1}+3\lambda_3\delta\right)\mathbb{E}\|X_{km+l}\|^4+\left(\frac{24\lambda_3^2h^2}{\epsilon_1}+\lambda_3\delta\right)\mathbb{E}\|X_{km}\|^4+\frac{12\delta^2}{\epsilon_1}\|g(0,0)\|^4\\
&+\frac{24\lambda_3\delta^2}{\epsilon_1}\|g(0,0)\|^2\mathbb{E}(\|X_{km+l}\|^2+\|X_{km}\|^2)+2\|g(0,0)\|^2\delta\mathbb{E}\|X_{km+l}\|^2
\end{align*}
and
\begin{align*}
R_3
=&2\mathbb{E}(\|X_{km+l+1}\|^2-\|X_{km+l}\|^2)\langle X_{km+l},g(X_{km+l},X_{km})\rangle\Delta B_{km+l}\\
\leq&\epsilon_2\mathbb{E}(\|X_{km+l+1}\|^2-\|X_{km+l}\|^2)^2+\frac{1}{\epsilon_2}\mathbb{E}(\langle X_{km+l},g(X_{km+l},X_{km})\rangle\Delta B_{km+l})^2\\
\leq&\epsilon_2\mathbb{E}(\|X_{km+l+1}\|^2-\|X_{km+l}\|^2)^2+\frac{3\lambda_3\delta}{\epsilon_2}\mathbb{E}\|X_{km+l}\|^4\\
&+\frac{\lambda_3\delta}{\epsilon_2}\mathbb{E}\|X_{km}\|^4+\frac{2\|g(0,0)\|^2\delta}{\epsilon_2}\mathbb{E}\|X_{km+l}\|^2.
\end{align*}
Substituting $R_1$, $R_2$ and $R_3$ into $RHS$, we get
\begin{align}
RHS\leq&-(2\lambda_1-1-\lambda_2)\delta\mathbb{E}\|X_{km+l+1}\|^4
+\mathbb{E}\|X_{km+l+1}\|^2\|X_{km+l+1}-X_{km+l}\|^2\notag \\
&+\left(\frac{\epsilon_1}{4}+\epsilon_2\right)\mathbb{E}\left(\|X_{km+l+1}\|^2-\|X_{km+l}\|^2\right)^2+\left(\frac{24\lambda_3^2\delta^2}{\epsilon_1}+3\lambda_3\delta
+\frac{3\lambda_3\delta}{\epsilon_2}\right)\mathbb{E}\|X_{km+l}\|^4\label{R-BE-3}\\
&+\left(\frac{24\lambda_3^2\delta^2}{\epsilon_1}+\lambda_3\delta+\lambda_2\delta+\frac{\lambda_3\delta}{\epsilon_2}\right)\mathbb{E}\|X_{km}\|^4+C\delta\notag ,
\end{align}
where $\mathbb{E}\|X_{km+l+1}\|^2\leq C$, $k\in\mathbb{N}$, $l=0,1,\cdots,m-1$ are used and $C$ is independent of $\delta$, $k$ and $l$.
Let $\epsilon_2+\frac{\epsilon_1}{4}=\frac{1}{2}$, $\alpha_4=\frac{2(2\lambda_1-1-\lambda_2)-\frac{48\lambda_3^2\delta}{\epsilon_1}-6\lambda_3-\frac{6\lambda_3}{\epsilon_2}}{1+2(2\lambda_1-1-\lambda_2)\delta}$ and $\beta_4=\frac{\frac{48\lambda_3^2\delta}{\epsilon_1}+2\lambda_3+2\lambda_2+\frac{2\lambda_3}{\epsilon_2}}{1+2(2\lambda_1-1-\lambda_2)\delta}$. Recall that $LHS=RHS$,
\begin{align*}
(1+2(2\lambda_1-1-\lambda_2)\delta)\mathbb{E}\|X_{km+l+1}\|^4\leq&\left(1+\frac{48\lambda_3^2\delta^2}{\epsilon_1}+6\lambda_3\delta+\frac{6\lambda_3\delta}{\epsilon_2}\right)\mathbb{E}\|X_{km+l}\|^4\\
&+\left(\frac{48\lambda_3^2\delta^2}{\epsilon_1}+2\lambda_3\delta+2\lambda_2\delta+\frac{2\lambda_3\delta}{\epsilon_2}\right)\mathbb{E}\|X_{km}\|^4+C\delta,
\end{align*}
which implies
\begin{equation*}
\begin{split}
\mathbb{E}\|X_{km+l+1}\|^4
\leq&(1-\alpha_4 \delta)\mathbb{E}\|X_{km+l}\|^4+\beta_4 \delta\mathbb{E}\|X_{km}\|^4+C\delta.
\end{split}
\end{equation*}
Up to now, it suffices to show that
$$\alpha_4-\beta_4=\frac{2\left(2\lambda_1-1-2\lambda_2-4\lambda_3\right)-\frac{96\lambda_3^2\delta}{\epsilon_1}-\frac{8\lambda_3}{\epsilon_2}}{1+2(2\lambda_1-1-\lambda_2)\delta}>0.$$
Since $1+2(2\lambda_1-1-\lambda_2)\delta>0$,  we just need
 $$2\lambda_1-1-2\lambda_2-4\lambda_3>\frac{48\lambda_3^2\delta}{\epsilon_1}+\frac{4\lambda_3}{\epsilon_2}.$$
Note that the condition $\lambda_1-1-\lambda_2-2\lambda_3>4(p-1)\lambda_3$ equals to $2\lambda_1-1-2\lambda_2-4\lambda_3>8(p-1)\lambda_3+1$, and $8(p-1)\lambda_3+1>8\lambda_3+1$.  We choose $\epsilon_2=\frac{8\lambda_3}{16\lambda_3+1}$, then
$$\frac{4\lambda_3}{\epsilon_2}=8\lambda_3+\frac{1}{2},$$
 and there exists $\delta_1>0$ such that
 $$\frac{48\lambda_3^2\delta}{\epsilon_1}\leq\frac{1}{2},\qquad \forall ~\delta\in(0,\delta_1),$$
which leads to
$$\frac{48\lambda_3^2\delta}{\epsilon_1}+\frac{4\lambda_3}{\epsilon_2}\leq 8\lambda_3+1<2\lambda_1-1-2\lambda_2-4\lambda_3,$$
i.e., $\alpha_4-\beta_4>0$. Therefore, there exists $C>0$ independent of $\delta$, $k$ and $l$ such that
 $$\mathbb{E}\|X_{km+l+1}\|^{4}\leq C$$
 for all $k\in\mathbb{N}$ and $l=0,1,\cdots,m-1$. This implies that $\mathbb{E}\|X_{km+l+1}\|^{2p'}\leq C$ holds with $p'=2$.

 By repeating the same procedure as the case $p'=2$, there exist $\delta_0>0$ and $C>0$ independent of $\delta$, $k$ and $l$ such that $\mathbb{E}\|X_{km+l+1}\|^{2p'}\leq C$ for $p'=3,4,\cdots, p$, which completes the proof.
  \end{proof}
\begin{corollary}\label{Boundedness of BE method}
Under Assumptions \ref{assumption1} and \ref{assumption2}, if $\lambda_1-\lambda_2-2\lambda_3-1>0$, then, for any $\delta\in(0,1)$, there exists a constant $C_2>0$ independent of $\delta$ and $k$ such that $\{Y^{0,x}_{k}\}_{k\in\mathbb{N}}$ satisfies
\begin{equation}\label{boundedness-BE}
\begin{split}
\sup_{k\in\mathbb{N}}\mathbb{E}\left\|Y^{0,x}_{k+1}\right\|^2\leq C_2(1+\|x\|^2).
\end{split}
\end{equation}
\end{corollary}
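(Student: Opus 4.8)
The plan is to recognize that this corollary is precisely the $p=1$ instance of \lemref{BE-uniform-bounded-2p}, read off at the integer grid points. Since $Y_{k+1}=X_{(k+1)m}=X_{km+(m-1)+1}$, the claimed bound is the specialization to $l=m-1$ of the second-moment estimate already produced in Case~1 of that lemma's proof, so no new mechanism is needed; the whole task reduces to tracking constants and confirming their independence of $\delta$ and $k$.

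First I would recall the one-step recursion obtained by pairing the scheme \eqref{Xn} with $X_{km+l+1}$ and invoking the dissipativity estimate \eqref{f-1} together with the growth estimate \eqref{g-1}, namely \eqref{boundedness-BE2},
\[
\mathbb{E}\left\|X_{km+l+1}\right\|^2\leq(1-\alpha_1\delta)\mathbb{E}\left\|X_{km+l}\right\|^2+\beta_1\delta\,\mathbb{E}\left\|X_{km}\right\|^2+\gamma_1\delta ,
\]
with $\alpha_1,\beta_1,\gamma_1$ as defined there. The hypothesis $\lambda_1-\lambda_2-2\lambda_3-1>0$ ensures $0<\alpha_1\delta<1$ and $\beta_1/\alpha_1<1$ for every $\delta\in(0,1)$, so \lemref{Inequality 1} applies and yields the within-block bound \eqref{boundedness-BE21}. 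Specializing to $l=m-1$ and using $X_{km}=Y_k$, $X_{(k+1)m}=Y_{k+1}$ gives the block-to-block contraction
\[
\mathbb{E}\left\|Y_{k+1}\right\|^2=\mathbb{E}\left\|X_{(k+1)m}\right\|^2\leq r_1(m-1)\,\mathbb{E}\left\|Y_{k}\right\|^2+\frac{\gamma_1}{\alpha_1},\qquad r_1(m-1)\in(0,1).
\]
Iterating this geometric recursion in $k$ and summing the resulting geometric series produces a uniform-in-$k$ bound exactly as in \eqref{boundedness-BE3}.

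The only genuine obstacle is to check that the final constant does not blow up as $\delta\to 0$. Although $\alpha_1,\beta_1,\gamma_1$ each carry the factor $1+(2\lambda_1-1)\delta$ in their denominators, this factor cancels in every quantity that survives into the final estimate: one has $\beta_1/\alpha_1=\frac{2(\lambda_2+\lambda_3)}{2\lambda_1-2\lambda_3-1}$ and $\gamma_1/\alpha_1=\frac{2(\|f(0,0)\|^2+\|g(0,0)\|^2)}{2\lambda_1-2\lambda_3-1}$, both $\delta$-free, while the contraction factor $r_1(m-1)$ stays bounded away from $1$ uniformly in $\delta$ (this uniform gap is exactly what the constants $F_1$ and $F_2$ in \eqref{boundedness-BE3} encode). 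Collecting the coefficient of $\|x\|^2$ and the additive term and bounding both by their maximum then gives $\mathbb{E}\|Y_{k+1}\|^2\leq C_2(1+\|x\|^2)$ with $C_2=\max\{F_1,\tfrac{\gamma_1}{\alpha_1}(1+F_2)\}$ independent of $\delta$ and $k$, which is the assertion. Taking the supremum over $k$ completes the argument.
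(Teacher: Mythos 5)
Your proposal is correct and follows essentially the same route as the paper: the corollary is indeed just the $p=1$ case of Lemma~\ref{BE-uniform-bounded-2p} (its Case~1, via \eqref{boundedness-BE2}, Lemma~\ref{Inequality 1}, and the geometric iteration in \eqref{boundedness-BE3}) specialized to $l=m-1$, which is why the paper states it without a separate proof. Your bookkeeping of the constants is also sound: $\beta_1/\alpha_1$ and $\gamma_1/\alpha_1$ are $\delta$-free, $r_1(m-1)$ is bounded away from $1$ uniformly in $\delta\in(0,1)$ because $\alpha_1\geq\frac{2\lambda_1-2\lambda_3-1}{2\lambda_1}$, and $C_2=\max\{F_1,\tfrac{\gamma_1}{\alpha_1}(1+F_2)\}$ is independent of $\delta$ and $k$ as required.
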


Denote $\alpha_2:=\frac{2\lambda_1-\lambda_3-1}{1+(2\lambda_1-1)\delta}$,
$\beta_2:=\frac{\lambda_2+\lambda_3}{1+(2\lambda_1-1)\delta}$ and $\bar{r}_1(l):=\frac{\beta_2}{\alpha_2}+\left(1-\frac{\beta_2}{\alpha_2}\right)e^{-\alpha_2(l+1)\delta}$, $l=0,1,2,\cdots, m-1$. By $\lambda_1-\lambda_2-2\lambda_3-1>0$, we get $0<\alpha_2\delta<1$, $\frac{\beta_2}{\alpha_2}<1$ and thus $0<\bar{r}_1(l)<1$ for $l=0,1,2,\cdots, m-1$ and $\delta\in(0,1)$. In what follows, we show the continuous dependence on initial data of $\{Y_k\}_{k\in\mathbb{N}}$.
\begin{lemma}\label{Dependence on  initial value of BE method}
Let Assumptions \ref{assumption1} and \ref{assumption2} hold. If $\lambda_1-\lambda_2-2\lambda_3-1>0$, then, for any $\delta\in(0,1)$ and any two initial values $x,y\in\mathbb{R}^d$ with $x\neq y$, the solutions generated by the BE method satisfy
\begin{equation}\label{Dependence on  initial value-BE}
\begin{split}
\mathbb{E}\left\|Y^{0,x}_{k+1}-Y^{0,y}_{k+1}\right\|^2
\leq\frac{1}{\bar{r}_1(m-1)}e^{(k+1)\log\bar{r}_1(m-1)}\|x-y\|^2.
\end{split}
\end{equation}
\end{lemma}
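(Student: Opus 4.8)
The plan is to reproduce, for the difference of two trajectories, the one-step contraction established in Case~1 of Lemma~\ref{BE-uniform-bounded-2p}, and then feed it into Lemma~\ref{Inequality 1}. First I would set $U_n:=X^{0,x}_n-X^{0,y}_n$, so that $U_0=x-y$, and subtract the two backward Euler recursions \eqref{Xn} to get
\[
U_{km+l+1}=U_{km+l}+\delta\,\Delta f+\Delta g\,\Delta B_{km+l},
\]
with $\Delta f:=f(X^{0,x}_{km+l+1},X^{0,x}_{km})-f(X^{0,y}_{km+l+1},X^{0,y}_{km})$ and $\Delta g:=g(X^{0,x}_{km+l},X^{0,x}_{km})-g(X^{0,y}_{km+l},X^{0,y}_{km})$. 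Taking the inner product with $U_{km+l+1}$ and using $2\langle a,a-b\rangle=\|a\|^2-\|b\|^2+\|a-b\|^2$, exactly as in \eqref{Xn-3}, yields
\[
\|U_{km+l+1}\|^2-\|U_{km+l}\|^2+\|U_{km+l+1}-U_{km+l}\|^2=2\delta\langle U_{km+l+1},\Delta f\rangle+2\langle U_{km+l+1},\Delta g\,\Delta B_{km+l}\rangle.
\]

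For the drift I would split $\Delta f$ into its first-variable and second-variable increments: the dissipativity \eqref{monotone} (with the common second argument $X^{0,x}_{km}$) gives $2\delta\langle U_{km+l+1},f(X^{0,x}_{km+l+1},X^{0,x}_{km})-f(X^{0,y}_{km+l+1},X^{0,x}_{km})\rangle\le-2\lambda_1\delta\|U_{km+l+1}\|^2$, while \eqref{muy} and $2ab\le a^2+b^2$ bound the remaining inner product by $\delta\|U_{km+l+1}\|^2+\lambda_2\delta\|U_{km}\|^2$. The genuine obstacle, as in the boundedness proof, is the implicitness: since the backward Euler step makes $U_{km+l+1}$ depend on $\Delta B_{km+l}$, the noise inner product is not a mean-zero martingale increment and cannot simply be dropped. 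I would resolve this by writing $U_{km+l+1}=U_{km+l}+(U_{km+l+1}-U_{km+l})$; then $\mathbb{E}\langle U_{km+l},\Delta g\,\Delta B_{km+l}\rangle=0$ because $U_{km+l}$ and $\Delta g$ are $\mathcal{F}_{km+l}$-measurable whereas $\Delta B_{km+l}$ is independent with zero mean, and $2\langle U_{km+l+1}-U_{km+l},\Delta g\,\Delta B_{km+l}\rangle\le\|U_{km+l+1}-U_{km+l}\|^2+\|\Delta g\,\Delta B_{km+l}\|^2$ is absorbed into the $\|U_{km+l+1}-U_{km+l}\|^2$ on the left, leaving $\delta\,\mathbb{E}\|\Delta g\|^2$ after $\mathbb{E}\|\Delta g\,\Delta B_{km+l}\|^2=\delta\,\mathbb{E}\|\Delta g\|^2$. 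Finally \eqref{sigma} gives $\|\Delta g\|^2\le\lambda_3(\|U_{km+l}\|^2+\|U_{km}\|^2)$.

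Collecting these estimates, taking expectations, cancelling the $\mathbb{E}\|U_{km+l+1}-U_{km+l}\|^2$ terms, and moving $(2\lambda_1-1)\delta\,\mathbb{E}\|U_{km+l+1}\|^2$ to the left produces
\[
\big(1+(2\lambda_1-1)\delta\big)\mathbb{E}\|U_{km+l+1}\|^2\le(1+\lambda_3\delta)\mathbb{E}\|U_{km+l}\|^2+(\lambda_2+\lambda_3)\delta\,\mathbb{E}\|U_{km}\|^2,
\]
which, upon dividing by $1+(2\lambda_1-1)\delta$, is exactly $\mathbb{E}\|U_{km+l+1}\|^2\le(1-\alpha_2\delta)\mathbb{E}\|U_{km+l}\|^2+\beta_2\delta\,\mathbb{E}\|U_{km}\|^2$ with the $\alpha_2,\beta_2$ fixed just before the lemma. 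Since $\lambda_1-\lambda_2-2\lambda_3-1>0$ already guarantees $0<\alpha_2\delta<1$ and $\beta_2<\alpha_2$, Lemma~\ref{Inequality 1} with $\gamma=0$ gives $\mathbb{E}\|U_{km+l+1}\|^2\le\bar{r}_1(l)\,\mathbb{E}\|U_{km}\|^2$.

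To conclude I would specialize to $l=m-1$, which yields the macro-step contraction $\mathbb{E}\|U_{(k+1)m}\|^2\le\bar{r}_1(m-1)\,\mathbb{E}\|U_{km}\|^2$, and iterate it $k+1$ times down to $U_0=x-y$ to obtain $\mathbb{E}\|Y^{0,x}_{k+1}-Y^{0,y}_{k+1}\|^2\le\bar{r}_1(m-1)^{k+1}\|x-y\|^2$; since $\bar{r}_1(m-1)\in(0,1)$ this is dominated by $\frac{1}{\bar{r}_1(m-1)}e^{(k+1)\log\bar{r}_1(m-1)}\|x-y\|^2$, which is the claimed bound. All moments involved are finite by Corollary~\ref{Boundedness of BE method} and Lemma~\ref{BE-uniform-bounded-2p}, so every manipulation above is legitimate. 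I expect the implicit-noise handling in the second paragraph to be the only real difficulty; the rest is the same bookkeeping as in the a priori estimates.
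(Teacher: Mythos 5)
Your proposal is correct and follows essentially the same route as the paper: the same splitting of the drift increment via \eqref{monotone} and \eqref{muy}, the same use of \eqref{sigma} for the diffusion, the identical one-step recursion $\mathbb{E}\|U_{km+l+1}\|^2\le(1-\alpha_2\delta)\mathbb{E}\|U_{km+l}\|^2+\beta_2\delta\,\mathbb{E}\|U_{km}\|^2$, and the same contraction iteration over macro-steps. The only cosmetic difference is in the one-step energy estimate — the paper squares the rearranged implicit equation \eqref{Xn1} so the martingale cross term vanishes outright (and the $\delta^2\|\Delta f\|^2$ term is simply dropped), whereas you take the inner product with $U_{km+l+1}$ and absorb the noise cross term by splitting and Young's inequality; both yield the same constants, so your argument is fine (including the harmless use of Lemma~\ref{Inequality 1} with $\gamma=0$, which the paper instead carries out inline).
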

\begin{proof}
From the BE method \eqref{Xn1}, for any initial data $x,y\in\mathbb{R}^d$, it follows
\begin{equation*}
\begin{split}
&X^{0,x}_{km+l+1}-X^{0,y}_{km+l+1}-\delta\left(f(X^{0,x}_{km+l+1},X^{0,x}_{km})-f(X^{0,y}_{km+l+1},X^{0,y}_{km})\right)
\\
=&X^{0,x}_{km+l}-X^{0,y}_{km+l}-\left(g(X^{0,x}_{km+l},X^{0,x}_{km})-g(X^{0,y}_{km+l},X^{0,y}_{km})\right)\Delta B_{km+l}.
\end{split}
\end{equation*}
Using Assumption \ref{assumption2}, we obtain
\begin{align*}
\mathbb{E}\left\|X^{0,x}_{km+l+1}-X^{0,y}_{km+l+1}\right\|^2=&\mathbb{E}\left\|X^{0,x}_{km+l}-X^{0,y}_{km+l}\right\|^2-\delta^2\mathbb{E}\left\|f(X^{0,x}_{km+l+1},X^{0,x}_{km})-f(X^{0,y}_{km+l+1},X^{0,y}_{km})\right\|^2
\\
&+2\delta\mathbb{E}\left\langle X^{0,x}_{km+l+1}-X^{0,y}_{km+l+1},f(X^{0,x}_{km+l+1},X^{0,x}_{km})-f(X^{0,y}_{km+l+1},X^{0,y}_{km})\right\rangle
\\
&+\mathbb{E}\left\|\left(g(X^{0,x}_{km+l},X^{0,x}_{km})-g(X^{0,y}_{km+l},X^{0,y}_{km})\right)\Delta B_{km+l}\right\|^2
\\
\leq&\mathbb{E}\left\|X^{0,x}_{km+l}-X^{0,y}_{km+l}\right\|^2
+\delta\mathbb{E}\left\|g(X^{0,x}_{km+l},X^{0,x}_{km})-g(X^{0,y}_{km+l},X^{0,y}_{km})\right\|^2
\\
&+2\delta\mathbb{E}\left\langle X^{0,x}_{km+l+1}-X^{0,y}_{km+l+1},f(X^{0,x}_{km+l+1},X^{0,x}_{km})-f(X^{0,y}_{km+l+1},X^{0,y}_{km})\right\rangle
\\
\leq&(1+\lambda_3\delta)\mathbb{E}\left\|X^{0,x}_{km+l}-X^{0,y}_{km+l}\right\|^2
+(\lambda_2+\lambda_3)\delta\mathbb{E}\left\|X^{0,x}_{km}-X^{0,y}_{km}\right\|^2
\\
&-(2\lambda_1-1)\delta\mathbb{E}\left\|X^{0,x}_{km+l+1}-X^{0,y}_{km+l+1}\right\|^2.
\end{align*}
Therefore
\begin{equation}\label{Dependence on  initial value-BE1}
\begin{split}
\mathbb{E}\left\|X^{0,x}_{km+l+1}-X^{0,y}_{km+l+1}\right\|^2
\leq& (1-\alpha_2\delta)\mathbb{E}\left\|X^{0,x}_{km+l}-X^{0,y}_{km+l}\right\|^2
+\beta_2\delta\mathbb{E}\left\|X^{0,x}_{km}-X^{0,y}_{km}\right\|^2
\\
\leq&\left(\frac{\beta_2}{\alpha_2}+\left(1-\frac{\beta_2}{\alpha_2}\right)(1-\alpha_2\delta)^{l+1}\right)\mathbb{E}\left\|X^{0,x}_{km}-X^{0,y}_{km}\right\|^2
\\
\leq&\bar{r}_1(l)\mathbb{E}\left\|X^{0,x}_{km}-X^{0,y}_{km}\right\|^2.
\end{split}
\end{equation}
If $l=m-1$, then
$$\mathbb{E}\left\|X^{0,x}_{(k+1)m}-X^{0,y}_{(k+1)m}\right\|^2
\leq \bar{r}_1(m-1)\mathbb{E}\left\|X^{0,x}_{km}-X^{0,y}_{km}\right\|^2.$$
Therefore, \eqref{Dependence on  initial value-BE1} yields
\begin{equation}\label{Dependence on  initial value-BE2}
\begin{split}
\mathbb{E}\left\|X^{0,x}_{km+l+1}-X^{0,y}_{km+l+1}\right\|^2
\leq& \bar{r}_1(l)\mathbb{E}\left\|X^{0,x}_{km}-X^{0,y}_{km}\right\|^2
\\
\leq&\frac{1}{\bar{r}_1(m-1)}e^{(km+l+1)\delta\log\bar{r}_1(m-1)}\|x-y\|^2
\end{split}
\end{equation}
and
\begin{equation}\label{Dependence on  initial value-BE3}
\begin{split}
\mathbb{E}\left\|Y^{0,x}_{k+1}-Y^{0,y}_{k+1}\right\|^2
=\mathbb{E}\left\|X^{0,x}_{(k+1)m}-X^{0,y}_{(k+1)m}\right\|^2
\leq\frac{1}{\bar{r}_1(m-1)}e^{(k+1)\log\bar{r}_1(m-1)}\|x-y\|^2.
\end{split}
\end{equation}
The proof is completed.
\end{proof}

Now, we are in a position to show the existence and uniqueness of the BE method's invariant measure.
\begin{theorem}\label{existence of BE}
  Under Assumptions \ref{assumption1} and \ref{assumption2}, if $\lambda_1-\lambda_2-2\lambda_3-1>0$, then, for any $\delta>0$, the Markov chain $\{Y^{0,x}_k\}_{k\in\mathbb{N}}$ admits a unique invariant measure $\pi^\delta$ and there exist $C_3,\bar{\nu}>0$ independent of $x$, $\delta$ and $k$ such that
  \begin{equation}\label{ exp-con-BE}
\left|\mathbb{E}\varphi(Y^{0,x}_k)-\int_{\mathbb{R}^d}\varphi(x)\pi^\delta(dx)\right|\leq C_3(1+\|x\|)e^{-\bar{\nu} k},\quad \forall ~\varphi \in C^1_b(\mathbb{R}^d).
\end{equation}
   \end{theorem}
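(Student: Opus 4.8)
The plan is to reproduce the remote-start (dissipative) argument of \thmref{IM of eq}, now at the level of the discrete chain $\{Y_{k}\}$, relying on the two structural estimates already available: the uniform mean-square bound of \corref{Boundedness of BE method} and the exponentially contracting dependence on initial data of \lemref{Dependence on initial value of BE method}. Concretely, I would extend the driving noise to a two-sided Brownian motion $\bar B$ (as in \thmref{IM of eq}) and, for each $k\in\mathbb{N}$, run the recursion \eqref{Xn2} from the remote integer time $-k$ with value $x$, writing $Y^{-k,x}_{0}$ for the state reached at time $0$. By the time-homogeneity from \thmref{Markov chain-nm}, $Y^{-k,x}_{0}$ and $Y^{0,x}_{k}$ share the same law, so it suffices to study the backward-started family $\{Y^{-k,x}_{0}\}_{k\in\mathbb{N}}$, transporting the forward estimates by this equality in law.

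For existence, I would first use \corref{Boundedness of BE method} to get $\sup_{k}\mathbb{E}\|Y^{-k,x}_{0}\|^{2}\leq C(1+\|x\|^{2})$. Then, for $k_{1}>k_{2}$, the Markov/flow property lets me view both $Y^{-k_{1},x}_{0}$ and $Y^{-k_{2},x}_{0}$ as the evolution over $[-k_{2},0]$ from the (random) datum $Y^{-k_{1},x}_{-k_{2}}$ and from $x$, respectively; conditioning on $\mathcal{F}_{-k_{2}}$ and applying \lemref{Dependence on initial value of BE method} gives
\begin{equation*}
\mathbb{E}\left\|Y^{-k_{1},x}_{0}-Y^{-k_{2},x}_{0}\right\|^{2}\leq\frac{1}{\bar r_{1}(m-1)}e^{k_{2}\log\bar r_{1}(m-1)}\,\mathbb{E}\left\|Y^{-k_{1},x}_{-k_{2}}-x\right\|^{2}.
\end{equation*}
With the a priori bound the right-hand side vanishes as $k_{2}\to\infty$, so $\{Y^{-k,x}_{0}\}_{k}$ is Cauchy in $L^{2}(\Omega;\mathbb{R}^{d})$ and converges to some $\eta^{\delta}_{x}$; applying \lemref{Dependence on initial value of BE method} to two deterministic data shows $\mathbb{E}\|Y^{-k,x}_{0}-Y^{-k,y}_{0}\|^{2}\to0$, so the limit is independent of $x$. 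Writing $\eta^{\delta}$ for this common limit and $\pi^{\delta}:=\mathbb{P}\circ(\eta^{\delta})^{-1}$, passing to the limit $k_{1}\to\infty$ above yields the time-uniform bound
\begin{equation*}
\mathbb{E}\left\|Y^{-k,x}_{0}-\eta^{\delta}\right\|^{2}\leq\frac{1}{\bar r_{1}(m-1)}e^{k\log\bar r_{1}(m-1)}\,(1+\|x\|^{2}).
\end{equation*}
Hence $P^{\delta}_{k}(x,\cdot)$ converges weakly to $\pi^{\delta}$, and the Chapman--Kolmogorov identity (exactly as in Step 3 of \thmref{IM of eq}) shows $\pi^{\delta}$ is invariant.

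Uniqueness is verbatim from the continuous case: weak convergence of $P^{\delta}_{k}(x,\cdot)$ to $\pi^{\delta}$ forces any invariant $\tilde\pi^{\delta}$ to equal $\pi^{\delta}$ by integrating $P^{\delta}_{k}(\cdot,B)$ against $\tilde\pi^{\delta}$ and letting $k\to\infty$. For the convergence rate, since $Y^{0,x}_{k}$ and $Y^{-k,x}_{0}$ have the same law, Lipschitz continuity of $\varphi$ and Jensen's inequality give, for $\varphi\in C^{1}_{b}(\mathbb{R}^{d})$,
\begin{equation*}
\left|\mathbb{E}\varphi(Y^{0,x}_{k})-\int_{\mathbb{R}^{d}}\varphi\,d\pi^{\delta}\right|\leq\|\varphi\|_{1}\,\mathbb{E}\left\|Y^{-k,x}_{0}-\eta^{\delta}\right\|\leq\|\varphi\|_{1}\left(\mathbb{E}\left\|Y^{-k,x}_{0}-\eta^{\delta}\right\|^{2}\right)^{1/2},
\end{equation*}
which combined with the time-uniform bound above yields \eqref{ exp-con-BE}.

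The only point demanding genuine care --- and the one I expect to be the main obstacle --- is that $C_{3}$ and $\bar\nu$ must be independent of $\delta$, whereas the contraction factor $\bar r_{1}(m-1)=\frac{\beta_{2}}{\alpha_{2}}+(1-\frac{\beta_{2}}{\alpha_{2}})e^{-\alpha_{2}}$ (using $m\delta=1$) a priori depends on $\delta$. The key observation is that $\frac{\beta_{2}}{\alpha_{2}}=\frac{\lambda_{2}+\lambda_{3}}{2\lambda_{1}-\lambda_{3}-1}$ is $\delta$-free (the common factor $1+(2\lambda_{1}-1)\delta$ cancels) and lies in $(0,1)$ under $\lambda_{1}-\lambda_{2}-2\lambda_{3}-1>0$, while $\alpha_{2}=\frac{2\lambda_{1}-\lambda_{3}-1}{1+(2\lambda_{1}-1)\delta}$ stays in a fixed compact subinterval of $(0,\infty)$ for $\delta\in(0,1)$. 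Consequently $\bar r_{1}(m-1)$ is bounded above by a constant $\rho<1$ and below by $\frac{\lambda_{2}+\lambda_{3}}{2\lambda_{1}-\lambda_{3}-1}>0$, both uniformly in $\delta$, so one may take $\bar\nu=-\tfrac12\log\rho$ and a $\delta$-independent $C_{3}$ proportional to $\|\varphi\|_{1}$, which delivers the required $x$-, $\delta$- and $k$-independence.
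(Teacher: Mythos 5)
Your proposal is correct, but it proves the theorem by a genuinely different route than the paper. The paper establishes existence via a Krylov--Bogoliubov-type argument: Chebyshev's inequality plus the uniform second-moment bound of Corollary~\ref{Boundedness of BE method} give tightness of $\{P^\delta_k(x,\cdot)\}_{k\in\mathbb{N}}$, Prokhorov's theorem yields a weakly convergent \emph{subsequence} $P^\delta_{k_i}(x,\cdot)\rightharpoonup\pi^\delta$, and invariance follows from the Chapman--Kolmogorov equation; uniqueness and the exponential rate \eqref{ exp-con-BE} are then extracted from the contraction estimate of Lemma~\ref{Dependence on initial value of BE method} combined again with Corollary~\ref{Boundedness of BE method}. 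You instead transplant the remote-start (pullback) method that the paper uses for the exact solution in Theorem~\ref{IM of eq}: two-sided noise, $L^2$-Cauchy property of the backward-started states $Y^{-k,x}_0$ (via conditioning on $\mathcal{F}_{-k_2}$ so that Lemma~\ref{Dependence on initial value of BE method} can be applied to the random datum $Y^{-k_1,x}_{-k_2}$), a concrete limit $\eta^\delta$ with $\pi^\delta=\mathbb{P}\circ(\eta^\delta)^{-1}$, and the rate read off directly from the pullback contraction. What your route buys: the limit measure is constructed explicitly as the law of an $L^2$ random variable, you get full-sequence (not merely subsequential) weak convergence of $P^\delta_k(x,\cdot)$, and — notably — your final observation that $\beta_2/\alpha_2=(\lambda_2+\lambda_3)/(2\lambda_1-\lambda_3-1)$ is $\delta$-free while $\alpha_2$ stays in a fixed compact subinterval of $(0,\infty)$ for $\delta\in(0,1)$ is exactly what is needed to justify the claimed $\delta$-independence of $C_3$ and $\bar\nu$; the paper's proof ends with constants written in terms of $\bar r_1(m-1)$ and leaves this uniformity implicit. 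What the paper's route buys: existence requires no extension of the driving noise to negative times and no flow/conditioning argument, only the moment bound and standard compactness. Both arguments share the same mild informality in testing weak convergence against bounded measurable (rather than bounded continuous) functions in the invariance and uniqueness steps, which you inherit by citing Step 3 of Theorem~\ref{IM of eq}.
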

   \begin{proof}
According to the Chebyshev inequality and \eqref{boundedness-BE}, it follows
    that the transition probability measure $\{P^\delta_k(x,\cdot)\}_{k\in\mathbb{N}}$ is tight. Then by the Prokhorov theorem \cite{Prato}, $\{P^\delta_k(x,\cdot)\}_{k\in\mathbb{N}}$  is weakly relatively compact, i.e., there exists $\pi^\delta\in\mathscr{P}(\mathbb{R}^d)$ such that the subsequence $\{P^\delta_{k_i}(x,\cdot)\}_{k_i\in\mathbb{N}}$ is weakly convergent to $\pi^\delta$ as $k_i\rightarrow \infty$. Moreover, for any $k\in\mathbb{N}$ and $\varphi\in B_b(\mathbb{R}^d)$, the Chapman-Kolmogorov equation leads to
     \begin{align*}
\int_{\mathbb{R}^d}\varphi(z)\pi^\delta(dz)
=&\lim_{k_i\rightarrow \infty}\int_{\mathbb{R}^d}\varphi(z)P^\delta_{k+k_i}(x,dz)=\lim_{k_i\rightarrow \infty}\int_{\mathbb{R}^d}\int_{\mathbb{R}^d}\varphi(z)P^\delta_{k}(y,dz)P^{\delta}_{k_i}(x,dy)
\\
=&\lim_{k_i\rightarrow \infty}\int_{\mathbb{R}^d}P^{\delta}_k\varphi(y)P^\delta_{k_i}(x,dy)
=\int_{\mathbb{R}^d}P^{\delta}_k\varphi(y)\pi^\delta(dy),
\end{align*}
where $P^{\delta}_k\varphi(y)=\mathbb{E}\varphi(Y_k^{0,y})$. This means that the Markov chain $\{Y^{0,x}_k\}_{k\in\mathbb{N}}$ admits an invariant measure $\pi^\delta\in\mathscr{P}(\mathbb{R}^d)$. Moreover, for any $x\in\mathbb{R}^d$ and $\varphi\in B_b(\mathbb{R}^d)$, we have
$$\lim_{k_i\rightarrow\infty}P^\delta_{k_i}\varphi(x)=\lim_{k_i\rightarrow\infty}\int_{\mathbb{R}^d}\varphi(z)P^\delta_{k_i}(x,dz)=\int_{\mathbb{R}^d}\varphi(z)\pi^\delta(dz).$$

In addition, if $\tilde{\pi}^\delta\in\mathscr{P}(\mathbb{R}^d)$ is another invariant measure of $\{Y^{0,x}_k\}_{k\in\mathbb{N}}$, then
$$\int_{\mathbb{R}^d}\varphi(z)\tilde{\pi}^\delta(dz)=\int_{\mathbb{R}^d}P^\delta_{k_i}\varphi(z)\tilde{\pi}^\delta(dz),\qquad k_i\in\mathbb{N},~\varphi\in B_b(\mathbb{R}^d).$$
Taking $k_i\rightarrow \infty$, we get
$$\int_{\mathbb{R}^d}\varphi(z)\tilde{\pi}^\delta(dz)=\lim_{k_i\rightarrow \infty}\int_{\mathbb{R}^d}P^\delta_{k_i}\varphi(z)\tilde{\pi}^\delta(dz)=\int_{\mathbb{R}^d}\varphi(z)\pi^\delta(dz),\qquad \varphi\in B_b(\mathbb{R}^d).$$
This means that $\pi^{\delta}$ is the unique invariant measure for $\{Y^{0,x}_k\}_{k\in\mathbb{N}}$.

Again using the Chapman-Kolmogorov equation, for any $\varphi\in C_b^1(\mathbb{R}^d)$, we have
 \begin{align}\label{ exp-con-BE1}
\left|\mathbb{E}\varphi(Y^{0,x}_k)-\int_{\mathbb{R}^d}\varphi(x)\pi^\delta(dx)\right|
=&\left|\lim_{k_i\rightarrow \infty}P^\delta_k\varphi(x)-\int_{\mathbb{R}^d}\varphi(z)P_{k+k_{i}}^\delta(x,dz)\right|\nonumber
\\
=&\left|\lim_{k_i\rightarrow \infty}\int_{\mathbb{R}^d}P^\delta_k\varphi(x)P_{k_{i}}^\delta(x,dy)-\int_{\mathbb{R}^d}P^\delta_k\varphi(y)P_{k_{i}}^\delta(x,dy)\right|
\\
    \leq&\lim_{k_i\rightarrow \infty}\int_{\mathbb{R}^d}\left|P^\delta_k\varphi(x)-P^\delta_k\varphi(y)\right|P_{k_{i}}^\delta(x,dy)\nonumber
    \\
    \leq&\|\varphi\|_1\cdot\lim_{k_i\rightarrow \infty}\int_{\mathbb{R}^d}\mathbb{E}\left\|Y^{0,x}_{k}-Y^{0,y}_{k}\right\|P_{k_{i}}^\delta(x,dy).\nonumber
\end{align}
From Lemma \ref{Dependence on  initial value of BE method}, it follows
\begin{equation}\label{ exp-con-BE2}
 \begin{split}
\left|\mathbb{E}\varphi(Y^{0,x}_k)-\int_{\mathbb{R}^d}\varphi(x)\pi^\delta(dx)\right|
 \leq&\|\varphi\|_1\cdot\lim_{k_i\rightarrow \infty}\int_{\mathbb{R}^d}\left(\mathbb{E}\left\|Y^{0,x}_{k}-Y^{0,y}_{k}\right\|^2\right)^\frac{1}{2}P_{k_{i}}^\delta(x,dy)
    \\
     \leq&\frac{\|\varphi\|_1}{\sqrt{\bar{r}_1(m-1)}}e^{k/2\log\bar{r}_1(m-1)}\int_{\mathbb{R}^d}\left\|x-y\right\|P_{k_{i}}^\delta(x,dy)
     \\
    =&\frac{\|\varphi\|_1}{\sqrt{\bar{r}_1(m-1)}}e^{k/2\log\bar{r}_1(m-1)}\mathbb{E}\|x-Y_{k_i}^{0,x}\|.
    \end{split}
\end{equation}
By Lemma \ref{Boundedness of BE method}, \eqref{ exp-con-BE2} yields
$$\left|\mathbb{E}\varphi(Y^{0,x}_k)-\int_{\mathbb{R}^d}\varphi(x)\pi^\delta(dx)\right|
\\
    \leq C_3(1+\|x\|)e^{-\bar{\nu} k},$$
 where $C_3=\frac{2\sqrt{C_2}\|\varphi\|_1}{\sqrt{\bar{r}_1(m-1)}}$ and $\bar{\nu}=-\frac{1}{2}\log\bar{r}_1(m-1)$. We complete the proof.
   \end{proof}

  \section{Approximation of the Invariant Measures}
In this section, we aim to estimate the error between invariant measures $\pi$ and $\pi^{\delta}$, i.e.
\begin{equation*}
\begin{split}
\left|\int_{\mathbb{R}^d}\phi(x)\pi(dx)-\int_{\mathbb{R}^d}\phi(x)\pi^{\delta}(dx)\right|.
\end{split}
\end{equation*}
Unless otherwise specified, we assume that $B(t)$ is a 1-dimensional Brownian motion throughout this section.
According to the uniqueness of the invariant measures $\pi$ and $\pi^\delta$, we know that both the Markov chains $\{X(k)\}_{k\in\mathbb{N}}$ and $\{Y_k\}_{k\in\mathbb{N}}$ are ergodic, that is
\begin{equation*}
\begin{split}
\lim_{K\rightarrow \infty}\frac{1}{K}\sum_{k=0}^{K-1}\mathbb{E}\phi(X(k))=\int_{\mathbb{R}^d}\phi(x)\pi(dx)
\end{split}
\end{equation*}
and
\begin{equation*}
\begin{split}
\lim_{K\rightarrow \infty}\frac{1}{K}\sum_{k=0}^{K-1}\mathbb{E}\phi(Y_k)=\int_{\mathbb{R}^d}\phi(x)\pi^{\delta}(dx).
\end{split}
\end{equation*}
Therefore,
\begin{equation*}
\begin{split}
\left|\int_{\mathbb{R}^d}\phi(x)\pi(dx)-\int_{\mathbb{R}^d}\phi(x)\pi^{\delta}(dx)\right|
=&\left|\lim_{K\rightarrow \infty}\frac{1}{K}\sum_{k=0}^{K-1}\mathbb{E}\phi(X(k))-\sum_{k=0}^{K-1}\mathbb{E}\phi(Y_k)\right|\\
\leq&\lim_{K\rightarrow \infty}\frac{1}{K}\sum_{k=0}^{K-1}\left|\mathbb{E}\phi(X(k))-\mathbb{E}\phi(Y_k)\right|.
\end{split}
\end{equation*}
From the inequality above, it is observed that the error between $\pi$ and $\pi^\delta$ can be estimated by the weak error of numerical methods. So, we contribute on the time-independent weak convergence analysis of the BE method and then give the approximation between $\pi$ and $\pi^\delta$.

 \subsection{A Priori Estimates}

Suppose that the Fr\'echet  partial derivatives of $f$ and $g$ exist, for any $\xi\in\mathbb{R}^d$, then the definition of Fr\'echet derivatives and Assumption \ref{assumption2} lead to
\begin{equation}\label{derivative-f-x1}
\xi^T\frac{\partial f}{\partial x}(x,y)\xi\leq -\lambda_1\|\xi\|^2,\qquad
\left\|\frac{\partial f}{\partial y}(x,y)\xi\right\|^2\leq \lambda_2\|\xi\|^2
\end{equation}
and
\begin{equation}\label{derivative-g-x1}
\left\|\frac{\partial g}{\partial x}(x,y)\xi\right\|^2
\leq \lambda_3\|\xi\|^2,\qquad
\left\|\frac{\partial g}{\partial y}(x,y)\xi\right\|^2\leq \lambda_3\|\xi\|^2.
\end{equation}

In the following, we will use $\mathcal{D}$ and $D$ to denote the Malliavin differentiation operator and the Fr\'echet differentiation operator, respectively. Under the estimates for the partial derivatives of $f$ and $g$, we derive the uniform estimation of the Fr\'echet derivative of $X^{i,\eta}(t)$ as follows.
\begin{lemma}\label{DX-uniform bounded}
Assume that the Fr\'echet derivatives of $f$ and $g$ exist and the conditions in Lemma \ref{uniform-bounded-2p} are satisfied. Then there exist two positive constants $C$ and $\nu_1$ independent of $t$ such that
\begin{equation}
\mathbb{E}\left\|DX^{i,\eta}(t)\xi\right\|^{2p}\leq Ce^{-\nu_1(t-i)}\|\xi\|^{2p},\quad t\geq i
\end{equation}
for  $\xi\in\mathbb{R}^d$ and $\eta\in L^{2p}(\Omega,\mathbb{R}^d;\mathcal{F}_{i})$, $i\in\mathbb{N}$.
 \end{lemma}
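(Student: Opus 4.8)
The plan is to differentiate Eq.~\eqref{eq} in its initial datum and estimate the resulting linear variational process. Writing $V(t):=DX^{i,\eta}(t)\xi$, differentiation of the flow (legitimate once the Fr\'echet derivatives of $f,g$ exist) shows that $V$ solves
\begin{equation*}
\begin{split}
dV(t)=&\Big(\tfrac{\partial f}{\partial x}(X(t),X([t]))V(t)+\tfrac{\partial f}{\partial y}(X(t),X([t]))V([t])\Big)dt\\
&+\Big(\tfrac{\partial g}{\partial x}(X(t),X([t]))V(t)+\tfrac{\partial g}{\partial y}(X(t),X([t]))V([t])\Big)dB(t),
\end{split}
\end{equation*}
with $V(i)=\xi$ and delayed term $V([t])=DX^{i,\eta}([t])\xi$; thus $V$ obeys a \emph{linear homogeneous} SDE with PCAs whose coefficients are controlled uniformly along the trajectory by $\lambda_1,\lambda_2,\lambda_3$ through the one-sided bound on $\tfrac{\partial f}{\partial x}$ and the norm bounds \eqref{derivative-f-x1}--\eqref{derivative-g-x1} on the remaining partials. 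In particular $V$ has finite moments of every order and It\^o's formula applies. Since Eq.~\eqref{eq} is autonomous it suffices to treat $i=0$ and transfer by time shift, which is precisely what makes $C$ and $\nu_1$ independent of $i$ and $t$.

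For the base case $p=1$ I would mimic Step~1 of Theorem~\ref{IM of eq}: apply It\^o's formula to $e^{\alpha t}\|V(t)\|^2$ on a sub-unit interval $[[t],t]$, on which $V([s])\equiv V([t])$ is frozen. The dissipativity $\xi^T\tfrac{\partial f}{\partial x}\xi\le-\lambda_1\|\xi\|^2$, the bounds $\|\tfrac{\partial f}{\partial y}\xi\|^2\le\lambda_2\|\xi\|^2$ and $\|\tfrac{\partial g}{\partial x}\xi\|^2\vee\|\tfrac{\partial g}{\partial y}\xi\|^2\le\lambda_3\|\xi\|^2$, together with $2ab\le a^2+b^2$, bound the drift of $\|V\|^2$ by $(-2\lambda_1+1+2\lambda_3)\|V(t)\|^2+(\lambda_2+2\lambda_3)\|V([t])\|^2$. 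Setting $\alpha:=2\lambda_1-2\lambda_3-1$, $\beta:=\lambda_2+2\lambda_3$ and integrating yield
\begin{equation*}
\mathbb{E}\|V(t)\|^2\le\Big(\tfrac{\beta}{\alpha}+\big(1-\tfrac{\beta}{\alpha}\big)e^{-\alpha\{t\}}\Big)\mathbb{E}\|V([t])\|^2=:r(\{t\})\,\mathbb{E}\|V([t])\|^2 .
\end{equation*}
The hypothesis $\lambda_1-\lambda_2-2\lambda_3-1>0$ gives $\alpha>\beta>0$, hence $r(\{t\})\in(0,1)$; iterating over the integer times from $V(0)=\xi$ and using $r(\{t\})\le r(1)^{-1}$ produce $\mathbb{E}\|V(t)\|^2\le r(1)^{-1}e^{t\log r(1)}\|\xi\|^2$, i.e.\ the claim with $C=r(1)^{-1}$ and $\nu_1=-\log r(1)>0$.

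For $p>1$ I would argue by induction on $p$ as in Lemma~\ref{uniform-bounded-2p}, applying It\^o's formula to $\|V(t)\|^{2p}$. The dissipative term $2p\|V\|^{2p-2}\langle V,\tfrac{\partial f}{\partial x}V\rangle$ contributes $-2p\lambda_1\|V\|^{2p}$, while the coupling $\tfrac{\partial f}{\partial y}V([t])$ and the two It\^o diffusion terms, after expanding $\|\tfrac{\partial g}{\partial x}V+\tfrac{\partial g}{\partial y}V([t])\|^2$, generate the mixed powers $\|V\|^{2p-1}\|V([t])\|$ and $\|V\|^{2p-2}\|V([t])\|^2$. Splitting each by Young's inequality, e.g.\ $\|V\|^{2p-2}\|V([t])\|^2\le\tfrac{p-1}{p}\|V\|^{2p}+\tfrac1p\|V([t])\|^{2p}$, collapses the drift to the form $-\alpha_p\|V\|^{2p}+\beta_p\|V([t])\|^{2p}$, whereupon the interval recursion above closes the proof with $\nu_1=-\log\big(\tfrac{\beta_p}{\alpha_p}+(1-\tfrac{\beta_p}{\alpha_p})e^{-\alpha_p}\big)$. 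I expect the main obstacle to be the constant bookkeeping at this step: one must keep track of the diffusion contributions of order $2p(2p-1)\lambda_3$ and verify that the comparison $\alpha_p>\beta_p>0$ reduces exactly to the strengthened dissipativity $\lambda_1-\lambda_2-2\lambda_3-1>4\lambda_3(p-1)$ assumed in Lemma~\ref{uniform-bounded-2p}; everything else is the routine interval-by-interval scheme already set up for Eq.~\eqref{eq}.
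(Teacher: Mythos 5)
Your proposal is correct and follows essentially the same route as the paper: derive the linear homogeneous variational equation with piecewise constant delay, apply It\^o's formula with an exponential weight to $\|V(t)\|^{2p}$ on each unit interval where $V([t])$ is frozen, use the one-sided bound on $\partial f/\partial x$ together with the norm bounds \eqref{derivative-f-x1}--\eqref{derivative-g-x1} and Young's inequality to obtain a contraction factor $r(\cdot)\in(0,1)$ per interval, and iterate over integer times to get $C$ and $\nu_1$ independent of $t$. The only cosmetic difference is that your ``induction on $p$'' is superfluous: precisely because the variational equation is homogeneous, the Young splits you describe produce only degree-$2p$ terms (no lower-order powers needing an inductive hypothesis, unlike Lemma \ref{uniform-bounded-2p} where the constants $f(0,0)$, $g(0,0)$ force one), so the recursion closes at each fixed $p$ directly --- which is exactly how the paper handles all $p\geq 1$ in a single pass.
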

 \begin{proof}
For any $\xi\in\mathbb{R}^d$, we have
 \begin{equation*}
\begin{split}
DX^{i,\eta}(t)\xi=&DX^{i,\eta}([t])\xi+\int_{[t]}^t\frac{\partial f}{\partial x}(X^{i,\eta}(s),X^{i,\eta}([t]))DX^{i,\eta}(s)\xi ds\\
&+\int_{[t]}^t\frac{\partial f}{\partial y}(X^{i,\eta}(s),X^{i,\eta}([t]))DX^{i,\eta}([t])\xi ds\\
&+\int_{[t]}^t\frac{\partial g}{\partial x}(X^{i,\eta}(s),X^{i,\eta}([t]))DX^{i,\eta}(s)\xi dB(s)\\
&+\int_{[t]}^t\frac{\partial g}{\partial y}(X^{i,\eta}(s),X^{i,\eta}([t]))DX^{i,\eta}([t])\xi dB(s).
\end{split}
\end{equation*}
Denote $H(t):=DX^{i,\eta}(t)\xi$. For any $\alpha>0$, applying It\^o's formula  to $e^{2\alpha pt}\left\|H(t)\right\|^{2p}$, we obtain \begin{align*}
e^{2\alpha pt}\mathbb{E}\left\|H(t)\right\|^{2p}
\leq&e^{2\alpha p[t]}\mathbb{E}\left\|H([t])\right\|^{2p}
+2\alpha p\mathbb{E} \int_{[t]}^te^{2\alpha ps}\left\|H(s)\right\|^{2p} ds\\
&+2p\mathbb{E} \int_{[t]}^te^{2\alpha ps}\left\|H(s)\right\|^{2(p-1)}\left\langle H(s), \frac{\partial f}{\partial x}(X^{i,\eta}(s),X^{i,\eta}([t]))H(s)\right\rangle ds\\
&+2p\mathbb{E} \int_{[t]}^te^{2\alpha ps}\left\|H(s)\right\|^{2(p-1)}\left\langle H(s), \frac{\partial f}{\partial y}(X^{i,\eta}(s),X^{i,\eta}([t]))H([t])\right\rangle ds\\
&+2p(2p-1)\mathbb{E} \int_{[t]}^te^{2\alpha ps}\left\|H(s)\right\|^{2(p-1)}\left\|\frac{\partial g}{\partial x}(X^{i,\eta}(s),X^{i,\eta}([t]))H(s)\right\|^2 ds\\
&+2p(2p-1)\mathbb{E} \int_{[t]}^te^{2\alpha ps}\left\|H(s)\right\|^{2(p-1)}\left\|\frac{\partial g}{\partial y}(X^{i,\eta}(s),X^{i,\eta}([t]))H([t])\right\|^2 ds.
\end{align*}
Taking $2\alpha_7 p =2\lambda_1 p -2\lambda_3(2p-1)^2-p-\lambda_2(p-1)$ and $\beta_7=\lambda_2+2\lambda_3(2p-1)$, then $2\alpha_7 p>\beta_7$ since $\lambda_1-\lambda_2-2\lambda_3-1>4\lambda_3(p-1)$. From \eqref{derivative-f-x1}, \eqref{derivative-g-x1} and Young's inequality, it follows that
 \begin{equation*}
\begin{split}
e^{2\alpha_7 pt}\mathbb{E}\left\|H(t)\right\|^{2p}\leq&e^{2\alpha_7 p[t]}\mathbb{E}\left\|H([t])\right\|^{2p}
+\left(2\alpha_7 -2\lambda_1+2\lambda_3(2p-1)+1\right)p\mathbb{E} \int_{[t]}^te^{2\alpha_7 ps}\left\|H(s)\right\|^{2p} ds\\
&+\left(\lambda_2p+2\lambda_3p(2p-1)\right)\mathbb{E} \int_{[t]}^te^{2\alpha_7 ps}\left\|H(s)\right\|^{2(p-1)}\left\|H([t])\right\|^2ds\\
\leq&e^{2\alpha_7 p[t]}\mathbb{E}\left\|H([t])\right\|^{2p}
+\beta_7\mathbb{E} \int_{[t]}^te^{2\alpha_7 ps}\left\|H([t])\right\|^{2p}ds\\
=&\left(e^{2\alpha_7 p[t]}+\frac{\beta_7}{2\alpha_7 p}\left(e^{2\alpha_7 pt}-e^{2\alpha_7 p[t]}\right)\right)\mathbb{E}\left\|H([t])\right\|^{2p}.
\end{split}
\end{equation*}
Hence
 \begin{equation*}
\begin{split}
\mathbb{E}\left\|DX^{i,\eta}(t)\xi\right\|^{2p}\leq&r_2(\{t\})\mathbb{E}\left\|DX^{i,\eta}([t])\xi\right\|^{2p},
\end{split}
\end{equation*}
where $r_2(\{t\})=\frac{\beta_7}{2\alpha_7 p}+\left(1-\frac{\beta_7}{2\alpha_7 p}\right)e^{-2\alpha_7 p\{t\}}$ and $0<r_2(\{t\})<1$. If $t=k$, then
 \begin{equation*}
\begin{split}
\mathbb{E}\left\|DX^{i,\eta}(k)\xi\right\|^{2p}=&\lim_{t\rightarrow k^-}\mathbb{E}\left\|DX^{i,\eta}(t)\xi\right\|^{2p}\leq r_2(1)\mathbb{E}\left\|DX^{i,\eta}(k-1)\xi\right\|^{2p}.
\end{split}
\end{equation*}
Therefore
\begin{equation*}
\begin{split}
\mathbb{E}\left\|DX^{i,\eta}(t)\xi\right\|^{2p}\leq&r_2(\{t\})r_2(1)^{[t]-i}\mathbb{E}\left\|\xi\right\|^{2p}\leq Ce^{-\nu_1(t-i)}\mathbb{E}\left\|\xi\right\|^{2p},
\end{split}
\end{equation*}
where $C=\frac{1}{r_2(1)}$ and $\nu_1=-\log r_2(1)$. The proof is completed.
 \end{proof}

 Next, we show the uniform estimate of the Malliavin derivative of $X^{i,\eta}(t)$.
\begin{lemma}\label{BE-Malliavin-uniform-bounded-2p}
 Let the conditions in Lemma \ref{uniform-bounded-2p} hold. Then there exists $C>0$ independent of $\delta$, $k$ and $l$ such that
  \begin{equation}\label{BE-uniformly bounded-1-0}
 \mathbb{E}\|\mathcal{D}_uX_{km+l+1}\|^{2p}\leq C
 \end{equation}
for all  $k\in\mathbb{N}$, $l=0,1,\cdots,m-1$ and $\delta\in(0,\tilde{\delta}_0)$ with  $\tilde{\delta}_0$ being sufficiently small.
  \end{lemma}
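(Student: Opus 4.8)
The plan is to differentiate the implicit scheme \eqref{Xn} in the Malliavin sense and to show that the recursion governing $H_n:=\mathcal{D}_uX_n$ inherits \emph{exactly} the dissipative, implicit structure of the BE method itself, so that the uniform bound can be harvested from Lemma \ref{Inequality 1} together with the $p=1\to p$ induction used in Lemma \ref{BE-uniform-bounded-2p}. Fix $u$ and write $n=km+l$. Since $\mathcal{D}_u\Delta B_n=\mathbf{1}_{(t_n,t_{n+1}]}(u)$ and $X_n$ depends only on the increments before $t_n$, we have $H_n=0$ whenever $t_n\le u$. Applying $\mathcal{D}_u$ to \eqref{Xn} gives, at the indices where $H$ is nontrivial,
\[
H_{km+l+1}=H_{km+l}+\delta\Big(\tfrac{\partial f}{\partial x}H_{km+l+1}+\tfrac{\partial f}{\partial y}H_{km}\Big)+\Big(\tfrac{\partial g}{\partial x}H_{km+l}+\tfrac{\partial g}{\partial y}H_{km}\Big)\Delta B_{km+l}+g(X_{km+l},X_{km})\mathbf{1}_{(t_{km+l},t_{km+l+1}]}(u),
\]
where the partial derivatives are evaluated at $(X_{km+l+1},X_{km})$ or $(X_{km+l},X_{km})$ as appropriate. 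The forcing term $g(X_{km+l},X_{km})\mathbf{1}_{(t_{km+l},t_{km+l+1}]}(u)$ is active at exactly one step, the one whose subinterval contains $u$; at that step every $H$ on the right-hand side vanishes, so it produces a single ``kick''
\[
H_{j+1}=\big(I-\delta\tfrac{\partial f}{\partial x}(X_{j+1},X_{k_0m})\big)^{-1}g(X_{j},X_{k_0m}),\qquad j=k_0m+l_0,\ \ t_j<u\le t_{j+1}.
\]

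Next I would bound the kick and then control the homogeneous evolution. By \eqref{derivative-f-x1} the resolvent is a contraction, $\big\|(I-\delta\tfrac{\partial f}{\partial x})^{-1}\big\|\le(1+\lambda_1\delta)^{-1}\le1$, so by \eqref{g-1} and Lemma \ref{BE-uniform-bounded-2p} the kick satisfies $\mathbb{E}\|H_{j+1}\|^{2p}\le C$ uniformly in $\delta$, $j$ and $u$. Beyond the kick step the indicator vanishes and $H$ solves the homogeneous linearized scheme, which has the same form as \eqref{Xn1}. For the base case $p=1$, taking the inner product of this recursion with $H_{km+l+1}$, moving the implicit term to the left and absorbing it through $\xi^T\tfrac{\partial f}{\partial x}\xi\le-\lambda_1\|\xi\|^2$, and estimating the remaining terms with \eqref{derivative-f-x1}, \eqref{derivative-g-x1} and Young's inequality, I expect a recursion of the form
\[
\mathbb{E}\|H_{km+l+1}\|^2\le(1-\alpha\delta)\mathbb{E}\|H_{km+l}\|^2+\beta\delta\,\mathbb{E}\|H_{km}\|^2,
\]
with $\alpha>\beta>0$ guaranteed by $\lambda_1-\lambda_2-2\lambda_3-1>0$. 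Lemma \ref{Inequality 1} (with $\gamma=0$) then supplies a factor $\bar r(l)\in(0,1)$ per step and a genuine contraction $\bar r(m-1)<1$ across each unit interval; combined with the uniform bound on the kick this yields $\mathbb{E}\|H_{km+l+1}\|^2\le C$ uniformly in $\delta$, $k$, $l$ and $u$.

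For $p>1$ I would follow Case 2 of Lemma \ref{BE-uniform-bounded-2p} essentially verbatim: multiply the inner-product identity by $\|H_{km+l+1}\|^2$, separate the predictable part $H_{km+l}$ from the increment before splitting cross terms via $2ab\le\epsilon a^2+\epsilon^{-1}b^2$, use $\mathbb{E}|\Delta B_{km+l}|^4=3\delta^2$, and invoke the induction hypothesis $\mathbb{E}\|H\|^{2(p-1)}\le C$ along with Lemma \ref{BE-uniform-bounded-2p} for the coefficient moments. Selecting the splitting parameters as in that proof and then taking $\delta<\tilde\delta_0$ small makes the effective constants satisfy $\alpha>\beta>0$ precisely under $\lambda_1-\lambda_2-2\lambda_3-1>4\lambda_3(p-1)$, and one more application of Lemma \ref{Inequality 1} closes the induction. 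The \textbf{main obstacle} is the bookkeeping in this higher-moment step, where the implicit resolvent and the martingale increment interact: one must isolate $\big(\|H_{km+l+1}\|^2-\|H_{km+l}\|^2\big)^2$ so it can be absorbed into the left-hand side, and must check that the inhomogeneous kick enters only at a single step and therefore does not accumulate when the recursion is unrolled across many unit intervals, leaving the contraction $\bar r(m-1)<1$ to dominate.
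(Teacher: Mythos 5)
Your proposal is correct and follows essentially the same route as the paper's proof: Malliavin-differentiating the implicit scheme, isolating the single ``kick'' step $g(X_{km+l},X_{km})\mathbf{1}_{\{t_{km+l}\le u<t_{km+l+1}\}}$ (where all other Malliavin terms vanish), bounding it via the dissipativity of $\tfrac{\partial f}{\partial x}$, and then propagating through the homogeneous linearized recursion with a $(1-\alpha\delta)$/$\beta\delta$ contraction structure closed by Lemma~\ref{Inequality 1} and the $p'=1,2,\dots,p$ induction of Lemma~\ref{BE-uniform-bounded-2p}. The only cosmetic differences are that you bound the kick with the resolvent estimate $\|(I-\delta\tfrac{\partial f}{\partial x})^{-1}\|\le(1+\lambda_1\delta)^{-1}$ instead of the paper's inner-product argument, and you merge the paper's Cases 2 and 3 (where $\mathcal{D}_uX_{km}$ is zero or not) into one unified recursion, neither of which changes the substance.
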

\begin{proof}
From \eqref{Xn}, the Malliavin derivative of $X_{km+l+1}$ is
\begin{equation}\label{MC}
\begin{split}
\mathcal{D}_uX_{km+l+1}=&\mathcal{D}_uX_{km+l}\mathbf{1}_{\{u<t_{km+l}\}}+g(X_{km+l},X_{km})\mathbf{1}_{\{t_{km+l}\leq u<t_{km+l+1}\}}\\
&+\delta\frac{\partial f}{\partial x}(X_{km+l+1},X_{km})\mathcal{D}_uX_{km+l+1}+\delta\frac{\partial f}{\partial y}(X_{km+l+1},X_{km})\mathcal{D}_uX_{km}\\
&+\frac{\partial g}{\partial x}(X_{km+l},X_{km})\mathcal{D}_uX_{km+l}\Delta B_{km+1}+\frac{\partial g}{\partial y}(X_{km+l},X_{km})\mathcal{D}_uX_{km}\Delta B_{km+1}\\
\end{split}
\end{equation}
for $0\leq u<t_{km+l+1}$. If $u\geq t_{km+l+1}$, then  $\mathcal{D}_uX_{km+l+1}=0$ and then we only consider the case of $0\leq u<t_{km+l+1}$ in the following.

\textbf{Case 1.} If $t_{km+l}\leq u<t_{km+l+1}$, then \eqref{MC} becomes
\begin{equation}\label{MC-1}
\begin{split}
\mathcal{D}_uX_{km+l+1}=&g(X_{km+l},X_{km})+\delta\frac{\partial f}{\partial x}(X_{km+l+1},X_{km})\mathcal{D}_uX_{km+l+1}.
\end{split}
\end{equation}
Multiplying \eqref{MC-1} by $\mathcal{D}_uX_{km+l+1}$, \eqref{derivative-f-x1} leads to
\begin{equation}\label{MC-2}
\begin{split}
\left\|\mathcal{D}_uX_{km+l+1}\right\|^2=&\left\langle \mathcal{D}_uX_{km+l+1},g(X_{km+l},X_{km})\right\rangle\\
&+\delta\left\langle \mathcal{D}_uX_{km+l+1},\frac{\partial f}{\partial x}(X_{km+l+1},X_{km})\mathcal{D}_uX_{km+l+1}\right\rangle\\
\leq&\left(\frac{1}{2}-\lambda_1\delta\right)\left\|\mathcal{D}_uX_{km+l+1}\right\|^2+\frac{1}{2}\mathbb{E}\left\|g(X_{km+l},X_{km})\right\|^2.
\end{split}
\end{equation}
Then multiplying \eqref{MC-2} by $\left\|\mathcal{D}_uX_{km+l+1}\right\|^{2(p-1)}$ and taking expectation, Young's inequality leads to
\begin{equation*}
\begin{split}
\mathbb{E}\left\|\mathcal{D}_uX_{km+l+1}\right\|^{2p}
\leq&\left(\frac{1}{2}-\lambda_1\delta\right)\mathbb{E}\left\|\mathcal{D}_uX_{km+l+1}\right\|^{2p}+\frac{1}{2}\mathbb{E}\left\|\mathcal{D}_uX_{km+l+1}\right\|^{2(p-1)}\left\|g(X_{km+l},X_{km})\right\|^2\\
\leq&\left(\frac{3}{4}-\lambda_1\delta\right)\mathbb{E}\left\|\mathcal{D}_uX_{km+l+1}\right\|^{2p}+\frac{1}{2p}\left(\frac{2(p-1)}{p}\right)^{p-1}\mathbb{E}\left\|g(X_{km+l},X_{km})\right\|^{2p},
\end{split}
\end{equation*}
which implies
\begin{equation*}
\begin{split}
\mathbb{E}\left\|\mathcal{D}_uX_{km+l+1}\right\|^{2p}\leq\frac{2}{\left(1+4\lambda_1\delta\right)p}\left(\frac{2(p-1)}{p}\right)^{p-1}\mathbb{E}\left\|g(X_{km+l},X_{km})\right\|^{2p}.
\end{split}
\end{equation*}
Since $\mathbb{E}\|X_{km+l}\|^{2p}\leq C$ and $g$ satisfies the global Lipschitz condition, we prove \eqref{BE-uniformly bounded-1-0} when $u\in[t_{km+l},t_{km+l+1})$.

\textbf{Case 2.} If $t_{km}\leq u<t_{km+l}$, then \eqref{MC} becomes
\begin{equation}\label{MC-4}
\begin{split}
\mathcal{D}_uX_{km+l+1}-\mathcal{D}_uX_{km+l}
=&\delta\frac{\partial f}{\partial x}(X_{km+l+1},X_{km})\mathcal{D}_uX_{km+l+1}+\frac{\partial g}{\partial x}(X_{km+l},X_{km})\mathcal{D}_uX_{km+l}\Delta B_{km+1}.
\end{split}
\end{equation}
We prove the assertion \eqref{BE-uniformly bounded-1-0} by induction. Let us first show that $\mathbb{E}\left\|\mathcal{D}_uX_{km+l+1}\right\|^{2p'}\leq C$ with $p'=1$.
Multiplying \eqref{MC-4} by $\mathcal{D}_uX_{km+l+1}$ leads to
\begin{equation}\label{MC-5}
\begin{split}
&\frac{1}{2}\left(\mathbb{E}\left\|\mathcal{D}_uX_{km+l+1}\right\|^2-\mathbb{E}\left\|\mathcal{D}_uX_{km+l}\right\|^2+\mathbb{E}\left\|\mathcal{D}_uX_{km+l+1}-\mathcal{D}_uX_{km+l}\right\|^2\right)\\
=&\delta\mathbb{E}\left\langle\mathcal{D}_uX_{km+l+1},\frac{\partial f}{\partial x}(X_{km+l+1},X_{km})\mathcal{D}_uX_{km+l+1}\right\rangle\\
&+\mathbb{E}\left\langle\mathcal{D}_uX_{km+l+1}-\mathcal{D}_uX_{km+l},\frac{\partial g}{\partial x}(X_{km+l},X_{km})\mathcal{D}_uX_{km+l}\Delta B_{km+1}\right\rangle\\
\leq&-\lambda_1\delta\mathbb{E}\left\|\mathcal{D}_uX_{km+l+1}\right\|^2+\frac{1}{2}\mathbb{E}\left\|\mathcal{D}_uX_{km+l+1}-\mathcal{D}_uX_{km+l}\right\|^2+\frac{1}{2}\lambda_3\delta\mathbb{E}\left\|\mathcal{D}_uX_{km+l}\right\|^2,
\end{split}
\end{equation}
which implies
\begin{equation*}
\begin{split}
(1+2\lambda_1\delta)\mathbb{E}\left\|\mathcal{D}_uX_{km+l+1}\right\|^2
\leq(1+\lambda_3\delta)\mathbb{E}\left\|\mathcal{D}_uX_{km+l}\right\|^2.
\end{split}
\end{equation*}
Since $\lambda_1-1-\lambda_2-2\lambda_3>0$, and for any $u$, there exist $n\in\mathbb{N}$ and $w\in\{0,1,\cdots,m-1\}$ such that $u\in[t_{nm+w},t_{nm+w+1})$, combining Case 1, we obtain
\begin{equation*}
\begin{split}
\mathbb{E}\left\|\mathcal{D}_uX_{km+l+1}\right\|^2\leq\mathbb{E}\left\|\mathcal{D}_uX_{km+l}\right\|^2\leq\cdots\leq \mathbb{E}\left\|\mathcal{D}_uX_{nm+w+1}\right\|^2\leq C.
\end{split}
\end{equation*}
If $p'=2$, without taking expectation in \eqref{MC-5}, multiplying \eqref{MC-5} by $\left\|\mathcal{D}_uX_{km+l+1}\right\|^2$ and then
taking expectation, Young's inequality leads to
\begin{align*}
&\frac{1}{2}\left(\mathbb{E}\left\|\mathcal{D}_uX_{km+l+1}\right\|^4-\mathbb{E}\left\|\mathcal{D}_uX_{km+l}\right\|^4+\mathbb{E}\left(\left\|\mathcal{D}_uX_{km+l+1}\right\|^2-\left\|\mathcal{D}_uX_{km+l}\right\|^2\right)\right)\\
&+\mathbb{E}\left\|\mathcal{D}_uX_{km+l+1}\right\|^2\left\|\mathcal{D}_uX_{km+l+1}-\mathcal{D}_uX_{km+l}\right\|^2\\
=&2\delta\mathbb{E}\left\|\mathcal{D}_uX_{km+l+1}\right\|^2\left\langle\mathcal{D}_uX_{km+l+1},\frac{\partial f}{\partial x}(X_{km+l+1},X_{km})\mathcal{D}_uX_{km+l+1}\right\rangle\\
&+2\mathbb{E}\left\|\mathcal{D}_uX_{km+l+1}\right\|^2\left\langle\mathcal{D}_uX_{km+l+1}-\mathcal{D}_uX_{km+l},\frac{\partial g}{\partial x}(X_{km+l},X_{km})\mathcal{D}_uX_{km+l}\Delta B_{km+1}\right\rangle\\
&+2\mathbb{E}\left(\left\|\mathcal{D}_uX_{km+l+1}\right\|^2-\left\|\mathcal{D}_uX_{km+l}\right\|^2\right)\left\langle\mathcal{D}_uX_{km+l},\frac{\partial g}{\partial x}(X_{km+l},X_{km})\mathcal{D}_uX_{km+l}\Delta B_{km+1}\right\rangle\\
=:&\tilde{R}_1+\tilde{R}_2+\tilde{R}_3.
\end{align*}
From \eqref{derivative-f-x1}, $\tilde{R}_1$ satisfies
\begin{align*}
\tilde{R}_1\leq-2\lambda_1\delta\mathbb{E}\left\|\mathcal{D}_uX_{km+l+1}\right\|^4+\tilde{R}_1+\tilde{R}_2.
\end{align*}
For $\tilde{R}_2$ and $\tilde{R}_3$, Young's inequality and \eqref{derivative-g-x1} lead to
\begin{align*}
\tilde{R}_2\leq&\mathbb{E}\left\|\mathcal{D}_uX_{km+l+1}\right\|^2\left\|\mathcal{D}_uX_{km+l+1}-\mathcal{D}_uX_{km+l}\right\|^2\\
&+\mathbb{E}\left\|\mathcal{D}_uX_{km+l+1}\right\|^2\left\|\frac{\partial g}{\partial x}(X_{km+l},X_{km})\mathcal{D}_uX_{km+l}\Delta B_{km+1}\right\|^2\\
\leq&\mathbb{E}\left\|\mathcal{D}_uX_{km+l+1}\right\|^2\left\|\mathcal{D}_uX_{km+l+1}-\mathcal{D}_uX_{km+l}\right\|^2+\lambda_3\delta\mathbb{E}\left\|\mathcal{D}_uX_{km+l}\right\|^4\\
&+\frac{1}{2}\epsilon_1\mathbb{E}\left(\left\|\mathcal{D}_uX_{km+l+1}\right\|^2-\left\|\mathcal{D}_uX_{km+l}\right\|^2\right)^2
+\frac{1}{2\epsilon_1}\mathbb{E}\left\|\frac{\partial g}{\partial x}(X_{km+l},X_{km})\mathcal{D}_uX_{km+l}\Delta B_{km+1}\right\|^4\\
\leq&\mathbb{E}\left\|\mathcal{D}_uX_{km+l+1}\right\|^2\left\|\mathcal{D}_uX_{km+l+1}-\mathcal{D}_uX_{km+l}\right\|^2+\left(\lambda_3\delta+\frac{3\lambda_3^3\delta^2}{2\epsilon_1}\right)\mathbb{E}\left\|\mathcal{D}_uX_{km+l}\right\|^4\\
&+\frac{1}{2}\epsilon_1\mathbb{E}\left(\left\|\mathcal{D}_uX_{km+l+1}\right\|^2-\left\|\mathcal{D}_uX_{km+l}\right\|^2\right)^2
\end{align*}
and
\begin{equation*}
\begin{split}
\tilde{R}_3\leq&\epsilon_2\mathbb{E}\left(\left\|\mathcal{D}_uX_{km+l+1}\right\|^2-\left\|\mathcal{D}_uX_{km+l}\right\|^2\right)^2\\
&+\frac{1}{\epsilon_2}\mathbb{E}\left\langle\mathcal{D}_uX_{km+l},\frac{\partial g}{\partial x}(X_{km+l},X_{km})\mathcal{D}_uX_{km+l}\Delta B_{km+1}\right\rangle^2\\
\leq&\epsilon_2\mathbb{E}\left(\left\|\mathcal{D}_uX_{km+l+1}\right\|^2-\left\|\mathcal{D}_uX_{km+l}\right\|^2\right)^2+\frac{\lambda_3\delta}{\epsilon_2}\mathbb{E}\left\|\mathcal{D}_uX_{km+l}\right\|^4.
\end{split}
\end{equation*}
Let $\epsilon_2+\frac{1}{2}\epsilon_1=\frac{1}{2}$, then
\begin{equation*}
\begin{split}
&\left(1+4\lambda_1\delta\right)\mathbb{E}\left\|\mathcal{D}_uX_{km+l+1}\right\|^4
\leq
\left(1+2\left(1+\frac{1}{\epsilon_2}\right)\lambda_3\delta+\frac{3\lambda_3^3\delta^2}{\epsilon_1}\right)\mathbb{E}\left\|\mathcal{D}_uX_{km+l}\right\|^4.
\end{split}
\end{equation*}
Since $\lambda_1-\lambda_2-1-2\lambda_3>0$, there exists $\delta'>0$ such that
$\mathbb{E}\left\|\mathcal{D}_uX_{km+l+1}\right\|^4
\leq\mathbb{E}\left\|\mathcal{D}_uX_{km+l}\right\|^4$ for all $\delta\in(0,\delta')$.
Following the same procedure as the case $p'=1$, there exists $C>0$ independent of $\delta$, $k$ and $l$ such that
\begin{equation*}
\begin{split}
\mathbb{E}\left\|\mathcal{D}_uX_{km+l+1}\right\|^4\leq C.
\end{split}
\end{equation*}
The inequality above shows $\mathbb{E}\|\mathcal{D}_uX_{km+l+1}\|^{2p'}\leq C$ with $p'=2$.
Repeating the procedure as above, we prove \eqref{BE-uniformly bounded-1-0} when $u\in[t_{km},t_{km+l})$.

\textbf{Case 3.} If $u<t_{km}$, then
\begin{equation}\label{MC-7}
\begin{split}
&\mathcal{D}_uX_{km+l+1}-\mathcal{D}_uX_{km+l}\\
=&\delta\frac{\partial f}{\partial x}(X_{km+l+1},X_{km})\mathcal{D}_uX_{km+l+1}+\delta\frac{\partial f}{\partial y}(X_{km+l+1},X_{km})\mathcal{D}_uX_{km}\\
&+\frac{\partial g}{\partial x}(X_{km+l},X_{km})\mathcal{D}_uX_{km+l}\Delta B_{km+1}+\frac{\partial g}{\partial y}(X_{km+l},X_{km})\mathcal{D}_uX_{km}\Delta B_{km+1}.
\end{split}
\end{equation}
Multiplying \eqref{MC-7} by $\mathcal{D}_uX_{km+l+1}$ and taking expectation, we have $LHS=RHS$, where
\begin{equation}\label{MC-8}
\begin{split}
LHS
=&\frac{1}{2}\left(\mathbb{E}\|\mathcal{D}_uX_{km+l+1}\|^2-\mathbb{E}\left\|\mathcal{D}_uX_{km+l}\right\|^2+\mathbb{E}\|\mathcal{D}_uX_{km+l+1}-\mathcal{D}_uX_{km+l}\|^2\right)
\end{split}
\end{equation}
and
\begin{equation}\label{MC-8-1}
\begin{split}
RHS=&\delta\mathbb{E}\left\langle \mathcal{D}_uX_{km+l+1},\frac{\partial f}{\partial x}(X_{km+l+1},X_{km})\mathcal{D}_uX_{km+l+1}\right\rangle\\
&+\delta\mathbb{E}\left\langle \mathcal{D}_uX_{km+l+1},\frac{\partial f}{\partial y}(X_{km+l+1},X_{km})\mathcal{D}_uX_{km}\right\rangle\\
&+\mathbb{E}\left\langle \mathcal{D}_uX_{km+l+1},\frac{\partial g}{\partial x}(X_{km+l},X_{km})\mathcal{D}_uX_{km+l}\Delta B_{km+1}\right\rangle\\
&+\mathbb{E}\left\langle \mathcal{D}_uX_{km+l+1},\frac{\partial g}{\partial y}(X_{km+l},X_{km})\mathcal{D}_uX_{km}\Delta B_{km+1}\right\rangle.
\end{split}
\end{equation}
By \eqref{derivative-f-x1} and Young's inequality, $RHS$ yields
\begin{align*}
RHS
\leq&\left(-\lambda_1+\frac{1}{2}\right)\delta\mathbb{E}\left\| \mathcal{D}_uX_{km+l+1}\right\|^2
+\frac{1}{2}\delta\mathbb{E}\left\|\frac{\partial f}{\partial y}(X_{km+l+1},X_{km})\mathcal{D}_uX_{km}\right\|^2\\
&+\mathbb{E}\left\langle \mathcal{D}_uX_{km+l+1}-\mathcal{D}_uX_{km+l},\frac{\partial g}{\partial x}(X_{km+l},X_{km})\mathcal{D}_uX_{km+l}\Delta B_{km+1}\right\rangle\\
&+\mathbb{E}\left\langle \mathcal{D}_uX_{km+l+1}-\mathcal{D}_uX_{km+l},\frac{\partial g}{\partial y}(X_{km+l},X_{km})\mathcal{D}_uX_{km}\Delta B_{km+1}\right\rangle\\
\leq&\left(-\lambda_1+\frac{1}{2}\right)\delta\mathbb{E}\left\| \mathcal{D}_uX_{km+l+1}\right\|^2
+\frac{1}{2}\mathbb{E}\left\| \mathcal{D}_uX_{km+l+1}-\mathcal{D}_uX_{km+l}\right\|^2\\
&+\left(\frac{1}{2}\lambda_2+\lambda_3\right)\delta\mathbb{E}\left\|\mathcal{D}_uX_{km}\right\|^2+\lambda_3\delta\mathbb{E}\left\|\mathcal{D}_uX_{km+l}\right\|^2,
\end{align*}
which implies
\begin{equation*}
\mathbb{E}\|\mathcal{D}_uX_{km+l+1}\|^2
\leq\left(1-\alpha_5\delta\right)\mathbb{E}\left\|\mathcal{D}_uX_{km+l}\right\|^2+\beta_5\delta\mathbb{E}\left\|\mathcal{D}_uX_{km}\right\|^2,
\end{equation*}
where $\alpha_5=\frac{2\lambda_1-1-2\lambda_3}{1+2\lambda_1\delta-\delta}$ and $\beta_5=\frac{\lambda_2+2\lambda_3}{1+2\lambda_1\delta-\delta}$. Since $\alpha_5>\beta_5>0$ and $0<\frac{\beta_5}{\alpha_5}+\left(1-\frac{\beta_5}{\alpha_5}\right)e^{-\alpha_5(l+1)\delta}<1$,
$l=0,1,\cdots,m-1$, under the condition $\lambda_1-1-\lambda_2-2\lambda_3>0$, we get
\begin{equation*}
\mathbb{E}\|\mathcal{D}_uX_{km+l+1}\|^2
\leq\left(\frac{\beta_5}{\alpha_5}+\left(1-\frac{\beta_5}{\alpha_5}\right)e^{-\alpha_5(l+1)\delta}\right)\mathbb{E}\left\|\mathcal{D}_uX_{km}\right\|^2.
\end{equation*}
If $l=m-1$, then $\mathbb{E}\|\mathcal{D}_uX_{(k+1)m}\|^2\leq\left(\frac{\beta_5}{\alpha_5}+\left(1-\frac{\beta_5}{\alpha_5}\right)e^{-\alpha_5}\right)\mathbb{E}\left\|\mathcal{D}_uX_{km}\right\|^2$. Since for any $u\geq 0$, there exists $n\in\mathbb{N}$ such that $u\in[n-1,n)$. Combining the result in Case 2, we obtain
\begin{equation*}
\begin{split}
\mathbb{E}\|\mathcal{D}_uX_{km+l+1}\|^2
\leq&\left(\frac{\beta_5}{\alpha_5}+\left(1-\frac{\beta_5}{\alpha_5}\right)e^{-\alpha_5(l+1)\delta}\right)\left(\frac{\beta_5}{\alpha_5}+\left(1-\frac{\beta_5}{\alpha_5}\right)e^{-\alpha}\right)^{k-n}\mathbb{E}\left\|\mathcal{D}_uX_{nm}\right\|^2\\
\leq&\mathbb{E}\left\|\mathcal{D}_uX_{nm}\right\|^2\leq C.
\end{split}
\end{equation*}
The inequality above shows $\mathbb{E}\|\mathcal{D}_uX_{km+l+1}\|^{2p'}\leq C$ with $p'=1$.
Following the same procedure as in Case 2, there exists $C>0$ independent of $\delta$, $k$ and $l$ such that $\mathbb{E}\|\mathcal{D}_uX_{km+l+1}\|^{2p'}\leq C$ for all $\delta\in(0,\delta_0'')$ with $\delta''_0>0$ sufficiently small and $p'=2,3,\cdots,p$.

Choosing $\tilde{\delta_0}=\min\{\delta_0',\delta_0''\}$, then \eqref{BE-uniformly bounded-1-0} holds for all $\delta\in(0,\tilde{\delta_0})$. The proof is completed.
\end{proof}
Except for the estimations of the partial derivatives of $f$ and $g$ with order 1, we also require that the coefficients $f$ and $g$ have partial derivatives up to  order 3.  For a general function $h: \mathbb{R}^d\times\mathbb{R}^d\rightarrow \mathbb{R}^d$, $(x,y)\mapsto h(x,y)$, we use $D_1^{(n)}$ and $D_2^{(n)}$ to denote the partial differentiation operators with order $n$ of $h$ with respect to the vectors $x$ and $y$, respectively. Then we also need the assumptions as follows.
\begin{assumption}\label{f-polynomial}
For any $x$, $x'$, $y$, $y'\in\mathbb{R}^d$, there exist two positive constants $K$ and $q$ such that
\begin{equation*}
\begin{split}
\left\|f(x,y)-f(x',y)\right\|^2\leq K\left(1+\|x\|^q+\|x'\|^q\right)\|x-x'\|^2.
\end{split}
\end{equation*}
\end{assumption}

\begin{assumption}\label{higher derivatives}
Assume that $f$ and $g$ have all continuous partial derivatives up to  order 2. For any $x$, $x'$, $y$, $y'$, $\xi$ and $\eta\in\mathbb{R}^d$, there exist two positive constants $K$ and $q$ such that
\begin{align*}
\left\|D_i^{(1)}f_I(x,y)\xi-D_i^{(1)}f_I(x',y)\xi\right\|^2\leq K\left(1+\|x\|^q+\|x'\|^q\right)\|x-x'\|^2\|\xi\|^2,\\
\left\|D_i^{(1)}f_I(x,y)\xi-D_i^{(1)}f_I(x,y')\xi\right\|^2\leq K\|y-y'\|^2\|\xi\|^2,\\
\left\|D_i^{(2)}f_I(x,y)(\xi,\eta)-D_i^{(2)}f_I(x',y)(\xi,\eta)\right\|^2\leq K\left(1+\|x\|^q+\|x'\|^q\right)\|x-x'\|^2\|\xi\|^2\|\eta\|^2,\\
\left\|D_i^{(2)}f_I(x,y)(\xi,\eta)-D_i^{(2)}f_I(x,y')(\xi,\eta)\right\|^2\leq K\|y-y'\|^2\|\xi\|^2\|\eta\|^2,\\
\left\|D_i^{(1)}g_I(x,y)\xi-D_i^{(1)}g_I(x',y')\xi\right\|^2\leq K\left(\|x-x'\|^2+\|y-y'\|^2\right)\|\xi\|^2,\\
\left\|D_i^{(2)}g_I(x,y)(\xi,\eta)-D_i^{(2)}g_I(x',y')(\xi,\eta)\right\|^2\leq K\left(\|x-x'\|^2+\|y-y'\|^2\right)\|\xi\|^2\|\eta\|^2
\end{align*}
for $i=1,2$ and $I=1,2,\cdots,d$, where $f_I$ and $g_I$ are the $I$th components of $f$ and $g$, respectively.
\end{assumption}

Form the assumption above, it follows that
\begin{equation*}
\left\|\frac{\partial^2 f_I}{\partial x^2}(x,y)(\xi,\eta)\right\|^2
\leq K\left(1+2\|x\|^q\right)\|\xi\|^2\|\eta\|^2,
\qquad
\left\|\frac{\partial^2 f_I}{\partial x\partial y}(x,y)(\xi,\eta)\right\|^2\leq K\|\xi\|^2\|\eta\|^2,
\end{equation*}
\begin{equation*}
\left\|\frac{\partial^2 f_I}{\partial y^2}(x,y)(\xi,\eta)\right\|^2
\leq K\|\xi\|^2\|\eta\|^2,
\qquad
\left\|\frac{\partial^2 f_I}{\partial y\partial x}(x,y)(\xi,\eta)\right\|^2
\leq K\left(1+2\|x\|^q\right)\|\xi\|^2\|\eta\|^2,
\end{equation*}
\begin{equation*}
\left\|\frac{\partial^2 g_I}{\partial x^2}(x,y)(\xi,\eta)\right\|^2
\leq K\|\xi\|^2\|\eta\|^2,
\qquad
\left\|\frac{\partial^2 g_I}{\partial x\partial y}(x,y)(\xi,\eta)\right\|^2
\leq K\|\xi\|^2\|\eta\|^2
\end{equation*}
and
\begin{equation*}
\left\|\frac{\partial^2 g_I}{\partial y^2}(x,y)(\xi,\eta)\right\|^2
\leq K\|\xi\|^2\|\eta\|^2,
\qquad
\left\|\frac{\partial^2 g_I}{\partial y\partial x}(x,y)(\xi,\eta)\right\|^2
\leq K\|\xi\|^2\|\eta\|^2.
\end{equation*}
Similarly, if $f$ and $g$ have all continuous partial derivatives up to  order 3, then
\begin{equation*}
\left\|\frac{\partial^3 f_I}{\partial x^3}(x,y)(\xi,\eta,\gamma)\right\|^2
\leq K\left(1+2\|x\|^q\right)\|\xi\|^2\|\eta\|^2\|\gamma\|^2
\end{equation*}
and
\begin{equation*}
\left\|\frac{\partial^3 f_I}{\partial x^2\partial y}(x,y)(\xi,\eta,\gamma)\right\|^2
\leq K\|\xi\|^2\|\eta\|^2\|\gamma\|^2,
\qquad
\left\|\frac{\partial^3 f_I}{\partial x\partial y^2}(x,y)(\xi,\eta,\gamma)\right\|^2
\leq K\|\xi\|^2\|\eta\|^2\|\gamma\|^2.
\end{equation*}

 \begin{lemma}\label{DuX-uniform bounded}
Let the conditions in Lemma \ref{uniform-bounded-2p} hold.  Assume that the Fr\'echet derivatives of $f$ and $g$ exist and $\mathbb{E}\|\mathcal{D}_u\eta\|^{2p}<\infty$, then
there exist two positive constants $C$ and $\nu_2$ independent of $t$ such that
\begin{equation}
\mathbb{E}\left\|\mathcal{D}_uX^{i,\eta}(t)\right\|^{2p}\leq Ce^{-\nu_2(t-u\vee i)}\left(1+\mathbb{E}\left\|\mathcal{D}_u\eta\right\|^{2p}\right),\quad t\geq i.
\end{equation}
 \end{lemma}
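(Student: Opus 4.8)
The plan is to write down the linear (variational) SDE satisfied by $H(t):=\mathcal{D}_uX^{i,\eta}(t)$ and then rerun the weighted It\^o analysis of \lemref{DX-uniform bounded}, splitting according to whether the Malliavin time $u$ lies before or after the initial instant $i$. For $u\ge i$ the Malliavin derivative vanishes on $[i,u)$ and jumps to $g(X^{i,\eta}(u),X^{i,\eta}([u]))$ at $t=u$; for $t>u$ it solves
\begin{align*}
H(t)=g(X^{i,\eta}(u),X^{i,\eta}([u]))&+\int_u^t\Big(\tfrac{\partial f}{\partial x}H(s)+\tfrac{\partial f}{\partial y}H([s])\Big)ds\\
&+\int_u^t\Big(\tfrac{\partial g}{\partial x}H(s)+\tfrac{\partial g}{\partial y}H([s])\Big)dB(s),
\end{align*}
where all partial derivatives are evaluated at $(X^{i,\eta}(s),X^{i,\eta}([s]))$. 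For $u<i$ the same equation holds with lower limit $i$ and with the initial datum $H(i)=\mathcal{D}_u\eta$ in place of the jump term.

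First I would exploit the frozen structure exactly as in \lemref{DX-uniform bounded}: on each interval $[[t],t]$ between consecutive integers the delayed term $H([s])$ is constant, so applying It\^o's formula to $e^{2\alpha_7 p t}\|H(t)\|^{2p}$ and invoking the one-sided bounds \eqref{derivative-f-x1}--\eqref{derivative-g-x1} together with Young's inequality yields the one-step contraction
\[
\mathbb{E}\|H(t)\|^{2p}\le r_2(\{t\})\,\mathbb{E}\|H([t])\|^{2p},\qquad r_2(\{t\})\in(0,1),
\]
with the same exponents $2\alpha_7 p=2\lambda_1 p-2\lambda_3(2p-1)^2-p-\lambda_2(p-1)$ and $\beta_7=\lambda_2+2\lambda_3(2p-1)$; the hypothesis $\lambda_1-\lambda_2-2\lambda_3-1>4\lambda_3(p-1)$ is precisely what forces $2\alpha_7 p>\beta_7>0$ and hence $r_2<1$.

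Next I would install the correct initial data on the first, possibly fractional, subinterval. When $u\ge i$, for a.e.\ $u$ one has $[u]<u$, so $H([s])=\mathcal{D}_uX^{i,\eta}([u])=0$ on $[u,[u]+1)$ and the variational equation there carries no delayed term; the same weighted It\^o computation then gives $\mathbb{E}\|H(t)\|^{2p}\le e^{-2\alpha_7 p(t-u)}\mathbb{E}\|g(X^{i,\eta}(u),X^{i,\eta}([u]))\|^{2p}$, and the linear growth bound \eqref{g-1} combined with the uniform moment estimate of \lemref{uniform-bounded-2p} controls the right-hand side by a constant. Iterating the one-step contraction over the integer grid from $[u]+1$ onward produces $\mathbb{E}\|H(t)\|^{2p}\le C e^{-\nu_2(t-u)}$. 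When $u<i$ there is no jump term and the iteration starts at the integer instant $i$ with datum $\mathcal{D}_u\eta$, giving $\mathbb{E}\|H(t)\|^{2p}\le r_2(1)^{[t]-i}r_2(\{t\})\,\mathbb{E}\|\mathcal{D}_u\eta\|^{2p}\le C e^{-\nu_2(t-i)}\mathbb{E}\|\mathcal{D}_u\eta\|^{2p}$. Merging the two cases through the exponent $t-u\vee i$ and absorbing both contributions into the factor $1+\mathbb{E}\|\mathcal{D}_u\eta\|^{2p}$ yields the claim, with $\nu_2=-\log r_2(1)$.

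The main obstacle I anticipate is the bookkeeping around the non-grid starting time $u$ in the case $u\ge i$: one must correctly identify that the delayed Malliavin derivative $\mathcal{D}_uX^{i,\eta}([u])$ vanishes on the initial fractional interval (so that the first step is pure decay driven only by the jump $g(X^{i,\eta}(u),X^{i,\eta}([u]))$), and then splice this fractional-interval estimate to the integer-grid iteration without losing the $t-u$ scaling in the exponent. The dissipativity-versus-diffusion balance needed for $r_2<1$ is identical to \lemref{DX-uniform bounded}, so the only genuinely new work is this alignment of the starting instant.
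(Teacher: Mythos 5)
Your proposal is correct and follows essentially the same route as the paper's proof: the same variational equation for $\mathcal{D}_uX^{i,\eta}(t)$, the same case split $u\geq i$ versus $u<i$, pure exponential decay on the fractional interval containing $u$ (where the delayed term vanishes), then the one-step contraction of \lemref{DX-uniform bounded} iterated over the integer grid, with the jump datum $g(X^{i,\eta}(u),X^{i,\eta}([u]))$ controlled by the moment bound of \lemref{uniform-bounded-2p}. The only cosmetic difference is in the decay-rate bookkeeping: the paper takes $\nu_2=\min\{\nu_1,(2\lambda_1-\lambda_3(2p-1))p\}$, which agrees with your $\nu_2=-\log r_2(1)$ since $r_2(1)\geq e^{-2\alpha_7 p}$ forces the minimum to be attained at $\nu_1$.
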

 \begin{proof}
Since
\begin{equation*}
\begin{split}
X^{i,\eta}(t)=\eta+\int_{i}^tf(X^{i,\eta}(s),X^{i,\eta}([s]))ds+\int_{i}^tg(X^{i,\eta}(s),X^{i,\eta}([s]))dB(s),
\end{split}
\end{equation*}
we have $\mathcal{D}_uX^{i,\eta}(t)=0$ for $u\geq t$, and for $u<t$,
\begin{equation}\label{DuX}
\begin{split}
\mathcal{D}_uX^{i,\eta}(t)=&\mathcal{D}_u\eta\mathbf{1}_{\{u<i\}}+g(X^{i,\eta}(u),X^{i,\eta}([u]))\mathbf{1}_{\{i\leq u<t\}}\\
&+\int_{u}^t\frac{\partial f}{\partial x}(X^{i,\eta}(s),X^{i,\eta}([s]))\mathcal{D}_uX^{i,\eta}(s)\mathbf{1}_{[i,t]}(s)ds\\
&+\int_{u}^t\frac{\partial f}{\partial y}(X^{i,\eta}(s),X^{i,\eta}([s]))\mathcal{D}_uX^{i,\eta}([s])\mathbf{1}_{[i,t]}(s)ds\\
&+\int_{u}^t\frac{\partial g}{\partial x}(X^{i,\eta}(s),X^{i,\eta}([s]))\mathcal{D}_uX^{i,\eta}(s)\mathbf{1}_{[i,t]}(s)dB(s)\\
&+\int_{u}^t\frac{\partial g}{\partial y}(X^{i,\eta}(s),X^{i,\eta}([s]))\mathcal{D}_uX^{i,\eta}([s])\mathbf{1}_{[i,t]}(s)dB(s).
\end{split}
\end{equation}

\textbf{Case 1.} If $i \leq u<t$, by denoting $I(t):=\mathcal{D}_uX^{i,\eta}(t)$, then
\begin{equation*}
\begin{split}
I(t)=&I(u)+\int_{u}^t\frac{\partial f}{\partial x}(X^{i,\eta}(s),X^{i,\eta}([s]))I(s)ds+\int_{u}^t\frac{\partial f}{\partial y}(X^{i,\eta}(s),X^{i,\eta}([s]))I([s])ds\\
&+\int_{u}^t\frac{\partial g}{\partial x}(X^{i,\eta}(s),X^{i,\eta}([s]))I(s)dB(s)+\int_{u}^t\frac{\partial g}{\partial y}(X^{i,\eta}(s),X^{i,\eta}([s]))I([s])dB(s)
\end{split}
\end{equation*}
where $I(u)=g(X^{i,\eta}(u),X^{i,\eta}([u]))$. If $[t]\leq u< t$, then $\mathcal{D}_uX^{i,\eta}([t])=0$. And  It\^o's formula leads to
\begin{equation*}
\begin{split}
\mathbb{E}\left\|I(t)\right\|^{2p}\leq&\mathbb{E}\left\|I(u)\right\|^{2p}
+2p\mathbb{E}\int_{u}^t\left\|I(s)\right\|^{2(p-1)}\left\langle I(s),\frac{\partial f}{\partial x}(X^{i,\eta}(s),X^{i,\eta}([s]))I(s)\right\rangle ds\\
&+p(2p-1)\mathbb{E}\int_{u}^t\left\|I(s)\right\|^{2(p-1)}\left\|\frac{\partial g}{\partial x}(X^{i,\eta}(s),X^{i,\eta}([s]))I(s)\right\|^2ds\\
\leq&\mathbb{E}\left\|I(u)\right\|^{2p}
-\left(2\lambda_1-\lambda_3(2p-1)\right)p\int_{u}^t\mathbb{E}\left\|I(s)\right\|^{2p} ds.
\end{split}
\end{equation*}
From \cite[Lemma 8.2]{Ito}, it follows
\begin{equation*}
\begin{split}
\mathbb{E}\left\|I(t)\right\|^{2p}\leq e^{-(2\lambda_1-\lambda_3(2p-1))p(t-u)}\mathbb{E}\left\|I(u)\right\|^{2p}.
\end{split}
\end{equation*}
And if $i\leq u<[t]$, for any $\alpha>0$, applying It\^o's formula to $e^{2\alpha pt}\left\|I(t)\right\|^{2p}$, we obtain
\begin{align*}
e^{2\alpha pt}\mathbb{E}\left\|I(t)\right\|^{2p}\leq&e^{2\alpha p[t]}\mathbb{E}\left\|I([t])\right\|^{2p}+2\alpha p\mathbb{E}\int_{[t]}^t e^{2\alpha ps}\mathbb{E}\left\|I(s)\right\|^{2p}ds\\
&+2p\mathbb{E}\int_{[t]}^te^{2\alpha ps}\left\|I(s)\right\|^{2(p-1)}\left\langle I(s),\frac{\partial f}{\partial x}(X^{i,\eta}(s),X^{i,\eta}([s]))I(s)\right\rangle ds\\
&+2p\mathbb{E}\int_{[t]}^te^{2\alpha ps}\left\|I(s)\right\|^{2(p-1)}\left\langle I(s),\frac{\partial f}{\partial y}(X^{i,\eta}(s),X^{i,\eta}([s]))I([s])\right\rangle ds\\
&+2p(2p-1)\mathbb{E}\int_{[t]}^te^{2\alpha ps}\left\|I(s)\right\|^{2(p-1)}\left\|\frac{\partial g}{\partial x}(X^{i,\eta}(s),X^{i,\eta}([s]))I(s)\right\|^2ds\\
&+2p(2p-1)\mathbb{E}\int_{[t]}^te^{2\alpha ps}\left\|I(s)\right\|^{2(p-1)}\left\|\frac{\partial g}{\partial y}(X^{i,\eta}(s),X^{i,\eta}([s]))I([s])\right\|^2ds.
\end{align*}
Following the same procedure as in Lemma \ref{DX-uniform bounded}, we get
\begin{equation*}
\begin{split}
\mathbb{E}\left\|I(t)\right\|^{2p}\leq&Ce^{-\nu_1(t-[u]-1)}\mathbb{E}\left\|I([u]+1)\right\|^{2p}\\
\leq&Ce^{-\nu_1(t-[u]-1)}\cdot e^{-(2\lambda_1-\lambda_3(2p-1))p([u]+1-u)}\mathbb{E}\left\|I(u)\right\|^{2p}\\
=&Ce^{-\nu_2(t-u)}\mathbb{E}\left\|I(u)\right\|^{2p},
\end{split}
\end{equation*}
where $\nu_2=\min\{\nu_1,(2\lambda_1-\lambda_3(2p-1))p\}$.

\textbf{Case 2.} If $u<i$, then $I(u)=\mathcal{D}_u\eta$ and
$$\mathbb{E}\left\|I(t)\right\|^{2p}\leq Ce^{-\nu_2(t-i)}\mathbb{E}\left\|I(u)\right\|^{2p}.$$
Since $\mathbb{E}\|X(t)\|^{2p}\leq C$ for all $t>0$ and $C$ is independent of $t$, we obtain
\begin{equation*}
\begin{split}
\mathbb{E}\left\|I(u)\right\|^{2p}
\leq&\mathbb{E}\left\|\mathcal{D}_u\eta\right\|^{2p}+\mathbb{E}\left\|g(X^{i,\eta}(u),X^{i,\eta}([u]))\right\|^{2p}
\leq C\left(1+\mathbb{E}\left\|\mathcal{D}_u\eta\right\|^{2p}\right).
\end{split}
\end{equation*}
Hence
\begin{equation*}
\begin{split}
\mathbb{E}\left\|\mathcal{D}_uX^{i,\eta}(t)\right\|^{2p}\leq&Ce^{-\nu_2(t-u\vee i)}\left(1+\mathbb{E}\left\|\mathcal{D}_u\eta\right\|^{2p}\right),
\end{split}
\end{equation*}
where $C$ is independent of $t$. We complete the proof
 \end{proof}
 \begin{lemma}\label{DuDX-uniform bounded}
Let the conditions in Lemma \ref{uniform-bounded-2p} hold and $p\geq4$. Assume also that Assumptions \ref{f-polynomial} and \ref{higher derivatives} hold, then
there exist two positive constants $C$ and $\nu_3$ independent of $t$ such that
\begin{equation}\label{DuDX-2p bounded}
\mathbb{E}\left\|\mathcal{D}_uDX^{i,\eta}(t)\xi\right\|^{2p'}\leq Ce^{-\nu_3(t-u\vee i)}\|\xi\|^{2p'}+Ce^{-\nu_1([t]-i-1)p'/p}\left(1+\mathbb{E}\left\|\mathcal{D}_u\eta\right\|^{2p'}\right)\|\xi\|^{2p'},\quad t\geq i
\end{equation}
for any $1\leq p'\leq \min\{\frac{p}{4},\frac{p}{q}\}$, $\xi\in\mathbb{R}^d$, $\eta\in \mathbb{D}^{1,2p}$ and $i\in\mathbb{N}$.
 \end{lemma}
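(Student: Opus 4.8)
The plan is to mirror the architecture of Lemmas \ref{DX-uniform bounded} and \ref{DuX-uniform bounded}. First I would derive the evolution equation obeyed by $W(t):=\mathcal{D}_uDX^{i,\eta}(t)\xi$, then apply It\^o's formula to an exponentially weighted power $e^{2\alpha p't}\|W(t)\|^{2p'}$, exploit the one-sided dissipativity of $\partial f/\partial x$ in \eqref{derivative-f-x1} to obtain a contraction on each unit interval, estimate the inhomogeneous (forcing) terms, and finally iterate the resulting recursion over the integer intervals using the piecewise-constant structure of $X([t])$ together with \cite[Lemma 8.2]{Ito}.

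To set up the equation, I would take the Malliavin derivative $\mathcal{D}_u$ of the variational equation for $H(t):=DX^{i,\eta}(t)\xi$ displayed in the proof of Lemma \ref{DX-uniform bounded}. Using the chain rule and the commutation relation between $\mathcal{D}_u$ and the It\^o integral (which reproduces the integrand at $s=u$ as a boundary term), one finds that $W(t)$ solves the \emph{same} linear equation as $H(t)$, with coefficients $\partial f/\partial x,\ \partial f/\partial y,\ \partial g/\partial x,\ \partial g/\partial y$ evaluated along $X$, but driven additionally by forcing terms of the schematic form
\begin{equation*}
\frac{\partial^2 f}{\partial x^2}(\mathcal{D}_uX(s),H(s))+\frac{\partial^2 f}{\partial x\partial y}(\mathcal{D}_uX([t]),H(s))+\frac{\partial^2 g}{\partial x^2}(\mathcal{D}_uX(s),H(s))+\cdots
\end{equation*}
plus the boundary contribution at $s=u$. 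Since the homogeneous part is exactly the operator analyzed in Lemma \ref{DX-uniform bounded}, it supplies the contraction, and the whole problem reduces to controlling the forcing in $L^{2p'}$.

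The forcing is estimated by Assumption \ref{higher derivatives}, which bounds the second derivatives of $f$ and $g$ by $K(1+\|X\|^{q})$ times the relevant vectors. After the It\^o step the critical quantity is $\mathbb{E}\big[(1+\|X(s)\|^{q})\,\|\mathcal{D}_uX(s)\|^{2}\,\|H(s)\|^{2}\big]^{p'}$, which I would control by a H\"older split across the three factors $(1+\|X(s)\|^{q})$, $\mathcal{D}_uX(s)$ and $H(s)$, each paired with a $2p$-th moment bound already available: $\mathbb{E}\|X(s)\|^{2p}\le C$ from Lemma \ref{uniform-bounded-2p}, $\mathbb{E}\|\mathcal{D}_uX(s)\|^{2p}\le Ce^{-\nu_2(s-u\vee i)}(1+\mathbb{E}\|\mathcal{D}_u\eta\|^{2p})$ from Lemma \ref{DuX-uniform bounded}, and $\mathbb{E}\|H(s)\|^{2p}\le Ce^{-\nu_1(s-i)}\|\xi\|^{2p}$ from Lemma \ref{DX-uniform bounded}. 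Keeping every H\"older exponent admissible is exactly what produces the hypotheses on $p'$: the polynomial weight forces $p'\le p/q$, while pairing $H$ at an exponent that returns $(\mathbb{E}\|H\|^{2p})^{p'/p}$ forces $p'\le p/4$ (which, together with $p'\ge1$, also forces $p\ge4$). Because $H$ enters only through the H\"older exponent $p'/p$, its decay rate $\nu_1$ reappears as $\nu_1 p'/p$, and the constant $(1+\mathbb{E}\|\mathcal{D}_u\eta\|^{2p'})$ is inherited from the $\mathcal{D}_uX$ factor; this is the origin of the second term in \eqref{DuDX-2p bounded}.

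Finally I would collect the estimates into a recursion $\mathbb{E}\|W(t)\|^{2p'}\le r(\{t\})\,\mathbb{E}\|W([t])\|^{2p'}+\Phi(t)$ with $r(1)\in(0,1)$, where the forcing $\Phi$ splits into a fully decaying part (both $\mathcal{D}_uX$ and $H$ decaying, anchored at $u\vee i$, yielding $Ce^{-\nu_3(t-u\vee i)}\|\xi\|^{2p'}$ with $\nu_3$ the smallest of the rates generated) and a part carrying the $\mathcal{D}_u\eta$-dependence and decaying at the $H$-governed rate $\nu_1 p'/p$ (yielding the second term, anchored at the first integer $\ge i$, whence the offset $[t]-i-1$). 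Iterating over the $[t]-i$ integer steps exactly as in Lemma \ref{DX-uniform bounded} then gives \eqref{DuDX-2p bounded}. I expect the main obstacle to be the bookkeeping of this H\"older apportionment: one must assign the exponents so that every factor lands inside an available $2p$-th moment bound \emph{while} simultaneously tracking the precise exponential rate each factor contributes, since this is what fixes both the admissible range of $p'$ and the two distinct decay exponents $\nu_3$ and $\nu_1 p'/p$. The boundary term at $s=u$ and the split into the sub-cases $u\ge i$ and $u<i$ (where $\mathcal{D}_u\eta$ enters as the initial datum for $W$) require the same care as in Lemma \ref{DuX-uniform bounded}, but are routine once the forcing is under control.
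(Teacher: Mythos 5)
Your proposal is correct and follows essentially the same route as the paper's proof: take $\mathcal{D}_u$ of the variational equation so that $\mathcal{D}_uDX^{i,\eta}(t)\xi$ satisfies the same dissipative linear equation as $DX^{i,\eta}(t)\xi$ with second-derivative forcing in $(H,\mathcal{D}_uX)$, apply It\^o's formula to the (exponentially weighted) $2p'$-th power, control the forcing by exactly the H\"older split $\bigl(\mathbb{E}(1+2\|X\|^q)^{2p'}\bigr)^{1/2}\bigl(\mathbb{E}\|H\|^{2p}\bigr)^{p'/p}\bigl(\mathbb{E}\|\mathcal{D}_uX\|^{2p}\bigr)^{p'/p}$ (which is where $p'\le\min\{p/4,p/q\}$ and the rate $\nu_1p'/p$ arise), and iterate the resulting one-step contraction over unit intervals with the case split on the position of $u$. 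The only cosmetic difference is attribution: in the paper the term $Ce^{-\nu_3(t-u\vee i)}\|\xi\|^{2p'}$ comes from propagating the boundary datum $J(u)$ (bounded by $C\|\xi\|^{2p'}$) through the homogeneous contraction rather than from a "fully decaying part of the forcing," but this does not change the argument.
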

 \begin{proof}
 For any $\xi\in\mathbb{R}^d$, denote $J(t):=\mathcal{D}_uDX^{i,\eta}(t)\xi$, then
\begin{align*}
J(t)
=&\frac{\partial g}{\partial x}(X^{i,\eta}(u),X^{i,\eta}([u]))H(u)\mathbf{1}_{\{i\leq u<t\}}
+\frac{\partial g}{\partial y}(X^{i,\eta}(u),X^{i,\eta}([u]))H([u])\mathbf{1}_{\{i\leq u<t\}}\\
&+\!\int_{u}^t\left(\frac{\partial^2 f}{\partial x^2}(X^{i,\eta}(s),X^{i,\eta}([s]))(H(s),I(s))+\frac{\partial^2 f}{\partial x\partial y}(X^{i,\eta}(s),X^{i,\eta}([s]))(H(s),I([s]))\right)\mathbf{1}_{[i,t]}(s)ds\\
&+\!\int_{u}^t\left(\frac{\partial f}{\partial x}(X^{i,\eta}(s),X^{i,\eta}([s]))J(s)+\frac{\partial f}{\partial y}(X^{i,\eta}(s),X^{i,\eta}([s]))J([s])\right) \mathbf{1}_{[i,t]}(s)ds\\
&+\!\int_{u}^t\left(\frac{\partial^2 f}{\partial y\partial x}(X^{i,\eta}(s),X^{i,\eta}([s]))(H([s]),I(s))+\frac{\partial^2 f}{\partial y^2}(X^{i,\eta}(s),X^{i,\eta}([s]))(H([s]),I([s]))\right)\mathbf{1}_{[i,t]}(s)ds\\
&+\!\int_{ u}^t\left(\frac{\partial^2 g}{\partial x^2}(X^{i,\eta}(s),X^{i,\eta}([s]))(H(s),I(s))+\frac{\partial^2 g}{\partial x\partial y}(X^{i,\eta}(s),X^{i,\eta}([s]))(H(s),I([s]))\right)\mathbf{1}_{[i,t]}(s)dB(s)\\
&+\int_{u}^t\left(\frac{\partial g}{\partial x}(X^{i,\eta}(s),X^{i,\eta}([s]))J(s) +\frac{\partial g}{\partial y}(X^{i,\eta}(s),X^{i,\eta}([s]))J([s]) \right)\mathbf{1}_{[i,t]}(s)dB(s)\\
&+\!\int_{ u}^t\left(\frac{\partial^2 g}{\partial y\partial x}(X^{i,\eta}(s),X^{i,\eta}([s]))(H([s]),I(s))+\frac{\partial^2 g}{\partial y^2}(X^{i,\eta}(s),X^{i,\eta}([s]))(H([s]),I([s]))\right)\mathbf{1}_{[i,t]}(s)dB(s)\\
\end{align*}
for $u<t$. Since $J(t)=0$ for $u\geq t$, we only consider the case $u<t$.

\textbf{Case 1.} If $u\geq [t]$, then $[s]=[t]$, $I([t])=0$, $J([t])=0$ and $J(u)=\frac{\partial g}{\partial x_1}(X^{i,\eta}(u),X^{i,\eta}([u]))H(u)
+\frac{\partial g}{\partial x_2}(X^{i,\eta}(u),X^{i,\eta}([u]))H([u])$. From It\^o's formula, it follows
\begin{align*}
\mathbb{E}\left\|J(t)\right\|^{2p'}
\leq&\mathbb{E}\left\|J(u)\right\|^{2p'}
+2p'\mathbb{E}\int_{ u}^t\left\|J(s)\right\|^{2(p'-1)}\left\langle J(s), \frac{\partial^2 f}{\partial x^2}(X^{i,\eta}(s),X^{i,\eta}([t]))(H(s),I(s))\right\rangle ds\\
&+2p'\mathbb{E}\int_{ u}^t\left\|J(s)\right\|^{2(p'-1)}\left\langle J(s), \frac{\partial f}{\partial x}(X^{i,\eta}(s),X^{i,\eta}([t]))J(s)\right\rangle  ds\\
&+2p'\mathbb{E}\int_{ u}^t\left\|J(s)\right\|^{2(p'-1)}\left\langle J(s),\frac{\partial^2 f}{\partial y\partial x}(X^{i,\eta}(s),X^{i,\eta}([t]))(H([t]),I(s))\right\rangle ds\\
&+2p'(2p'-1)\mathbb{E}\int_{ u}^t\left\|J(s)\right\|^{2(p'-1)}\left\|\frac{\partial g}{\partial x}(X^{i,\eta}(s),X^{i,\eta}([t]))J(s) \right\|^2ds
\\
&+4p'(2p'-1)\mathbb{E}\int_{u}^t\left\|J(s)\right\|^{2(p'-1)}\left\|\frac{\partial^2 g}{\partial x^2}(X^{i,\eta}(s),X^{i,\eta}([t]))(H(s),I(s))\right\|^2ds\\
&+4p'(2p'-1)\mathbb{E}\int_{ u}^t\left\|J(s)\right\|^{2(p'-1)}\left\|\frac{\partial^2 g}{\partial y\partial x}(X^{i,\eta}(s),X^{i,\eta}([t]))(H([t]),I(s))\right\|^2ds.
\end{align*}
And Young's inequality yields
\begin{align*}
\mathbb{E}\left\|J(t)\right\|^{2p'}
\leq&\mathbb{E}\left\|J(u)\right\|^{2p'}-\left(2\lambda_1-2\lambda_3(2p'-1)-4\epsilon_1-8(2p'-1)\epsilon_2\right)p'
\mathbb{E}\int_{ u}^t\left\|J(s)\right\|^{2p'} ds\\
&+\left(\frac{2p'-1}{2p'\epsilon_1}\right)^{2p'-1}\mathbb{E}\int_{ u}^t\left\| \frac{\partial^2 f}{\partial x^2}(X^{i,\eta}(s),X^{i,\eta}([t]))(H(s),I(s))\right\|^{2p'}ds\\
&+\left(\frac{2p'-1}{2p'\epsilon_1}\right)^{2p'-1}\mathbb{E}\int_{ u}^t\left\|\frac{\partial^2 f}{\partial y\partial x}(X^{i,\eta}(s),X^{i,\eta}([t]))(H([t]),I(s))\right\|^{2p'} ds\\
&+2(2p'-1)\left(\frac{p'-1}{p'\epsilon_2}\right)^{p'-1}\mathbb{E}\int_{u}^t\left\|\frac{\partial^2 g}{\partial x^2}(X^{i,\eta}(s),X^{i,\eta}([t]))(H(s),I(s))\right\|^{2p'}ds\\
&+2(2p'-1)\left(\frac{p'-1}{p'\epsilon_2}\right)^{p'-1}\mathbb{E}\int_{ u}^t\left\|\frac{\partial^2 g}{\partial y\partial x}(X^{i,\eta}(s),X^{i,\eta}([t]))(H([t]),I(s))\right\|^{2p'}ds.
\end{align*}
Taking $\epsilon_1=\frac{1}{2}\lambda_3$ and $\epsilon_2=\frac{2p}{4(2p-1)}\lambda_3$
and using the estimates of the partial derivatives of $f$ and $g$ with order 1 and 2, we obtain
\begin{align*}
\mathbb{E}\left\|J(t)\right\|^{2p'}
\leq&\mathbb{E}\left\|J(u)\right\|^{2p'}-2\left(\lambda_1-2\lambda_3-4\lambda_3(p'-1)\right)p'
\mathbb{E}\int_{ u}^t\left\|J(s)\right\|^{2p'} ds\\
&+\left(\frac{2p'-1}{\lambda_3p'}\right)^{2p'-1}\mathbb{E}\int_{ u}^t\left|\sum_{I=1}^d\left|\frac{\partial^2 f_I}{\partial x^2}(X^{i,\eta}(s),X^{i,\eta}([t]))(H(s),I(s))\right|^2\right|^{p'}ds\\
&+\left(\frac{2p'-1}{\lambda_3p'}\right)^{2p'-1}\mathbb{E}\int_{ u}^t\left|\sum_{I=1}^d\left|\frac{\partial^2 f}{\partial y\partial x}(X^{i,\eta}(s),X^{i,\eta}([t]))(H([t]),I(s))\right|^2\right|^{p'} ds\\
&+2(2p'-1)\left(\frac{2(p'-1)(2p-1)}{p'^2\lambda_3}\right)^{p'-1}\mathbb{E}\int_{u}^t\left|\sum_{I=1}^d\left|\frac{\partial^2 g}{\partial x^2}(X^{i,\eta}(s),X^{i,\eta}([t]))(H(s),I(s))\right|^2\right|^{p'}ds\\
&+2(2p'-1)\left(\frac{2(p'-1)(2p'-1)}{p'^2\lambda_3}\right)^{p'-1}\mathbb{E}\int_{ u}^t\left|\sum_{I=1}^d\left|\frac{\partial^2 g}{\partial y\partial x}(X^{i,\eta}(s),X^{i,\eta}([t]))(H([t]),I(s))\right|^2\right|^{p'}ds\\
\leq&\mathbb{E}\left\|J(u)\right\|^{2p'}-2\left(\lambda_1-2\lambda_3-4\lambda_3(p'-1)\right)p'
\mathbb{E}\int_{ u}^t\left\|J(s)\right\|^{2p'} ds\\
&+\left(\frac{2p'-1}{\lambda_3p'}\right)^{2p'-1}(Kd)^{p'}\int_{ u}^t\mathbb{E}\left(\left(1+2\left\|X^{i,\eta}(s)\right\|^q\right)^{p'}\|H(s)\|^{2p'}\|I(s)\|^{2p'}\right)ds\\
&+\left(\frac{2p'-1}{\lambda_3p'}\right)^{2p'-1}(Kd)^{p'}\int_{ u}^t\mathbb{E}\left(\left(1+2\left\|X^{i,\eta}(s)\right\|^q\right)^{p'}\|H([t])\|^{2p'}\|I(s)\|^{2p'}\right)ds
\\
&+2(2p'-1)\left(\frac{2(p'-1)(2p-1)}{p'^2\lambda_3}\right)^{p'-1}(Kd)^{p'}\int_{u}^t\mathbb{E}\left(\left\|H(s)\right\|^{2p'}\left\|I(s))\right\|^{2p'}\right)ds\\
&+2(2p'-1)\left(\frac{2(p'-1)(2p-1)}{p'^2\lambda_3}\right)^{p'-1}(Kd)^{p'}\int_{ u}^t\mathbb{E}\left(\left\|H([t])\right\|^{2p'}\left\|I(s))\right\|^{2p'}\right)ds.
\end{align*}
Using H\"{o}lder inequality, $qp'\leq p$ and $4p'\leq p$, Lemmas \ref{DX-uniform bounded} and \ref{DuX-uniform bounded} lead to
 \begin{equation*}
\begin{split}
&\mathbb{E}\left(\left(1+2\left\|X^{i,\eta}(s)\right\|^q\right)^{p'}\|H(s)\|^{2p'}\|I(s)\|^{2p'}\right)\\
\leq&\left(\mathbb{E}\left(1+2\left\|X^{i,\eta}(s)\right\|^q\right)^{2p'}\right)^{\frac{1}{2}}\left(\mathbb{E}\|H(s)\|^{8p'}\right)^{\frac{1}{4}}\left(\mathbb{E}\|I(s)\|^{8p'}\right)^{\frac{1}{4}}\\
\leq&\left(\mathbb{E}\left(1+2\left\|X^{i,\eta}(s)\right\|^q\right)^{2p'}\right)^{\frac{1}{2}}\left(\mathbb{E}\|H(s)\|^{2p}\right)^{\frac{p'}{p}}\left(\mathbb{E}\|I(s)\|^{2p}\right)^{\frac{p'}{p}}\\
\leq&C\left(1+\mathbb{E}\left\|\mathcal{D}_u\eta\right\|^{2p'}\right)\|\xi\|^{2p'}e^{-\nu_1(s-i)p'/p}e^{-\nu_2(s-u)p'/p}\\
\leq&C\left(1+\mathbb{E}\left\|\mathcal{D}_u\eta\right\|^{2p'}\right)\|\xi\|^{2p'}e^{-\nu_1([t]-i)p'/p}.
\end{split}
\end{equation*}
Similarly,
\begin{equation*}
\begin{split}
&\mathbb{E}\left(\|H(s)\|^{2p'}\|I(s)\|^{2p'}\right)\leq Ce^{-\nu_1([t]-i)p'/p}\left(1+\mathbb{E}\left\|\mathcal{D}_u\eta\right\|^{2p'}\right)\|\xi\|^{2p'},
\end{split}
\end{equation*}
 \begin{equation*}
\begin{split}
&\mathbb{E}\left(\left(1+2\left\|X^{i,\eta}(s)\right\|^q\right)^{p'}\|H([t])\|^{2p'}\|I(s)\|^{2p'}\right)
\leq Ce^{-\nu_1([t]-i)p'/p}\left(1+\mathbb{E}\left\|\mathcal{D}_u\eta\right\|^{2p'}\right)\|\xi\|^{2p'}
\end{split}
\end{equation*}
and
 \begin{equation*}
\begin{split}
\mathbb{E}\left(\|H([t])\|^{2p'}\|I(s)\|^{2p'}\right)
\leq Ce^{-\nu_1([t]-i)p'/p}\left(1+\mathbb{E}\left\|\mathcal{D}_u\eta\right\|^{2p'}\right)\|\xi\|^{2p'}.
\end{split}
\end{equation*}
Let $\bar{\alpha}=\lambda_1-2\lambda_3-4\lambda_3(p'-1)$, then $2\bar{\alpha}p'>0$ by the condition $\lambda_1-\lambda_2-1-2\lambda_3\geq 4\lambda_3(p'-1)$. Therefore, \cite[Lemma 8.2]{Ito} leads to
\begin{equation*}
\begin{split}
\mathbb{E}\left\|J(t)\right\|^{2p'}
\leq&e^{-2\bar{\alpha} p'(t-u)}\mathbb{E}\left\|J(u)\right\|^{2p'}+Ce^{-\nu_1([t]-i)p'/p}\left(1+\mathbb{E}\left\|\mathcal{D}_u\eta\right\|^{2p'}\right)\|\xi\|^{2p'}.
\end{split}
\end{equation*}

\textbf{Case 2. } If $i\leq u<[t]$, applying It\^o's formula to $e^{2\tilde{\alpha}p'}\left\|J(t)\right\|^{2p'}$, $\tilde{\alpha}>0$, we have
\begin{align*}
e^{2\tilde{\alpha}p't} \mathbb{E}\left\|J(t)\right\|^{2p'}\leq&e^{2\tilde{\alpha}p'[t]} \mathbb{E}\left\|J([t])\right\|^{2p'}+2\tilde{\alpha}p'\mathbb{E}\int_{[t]}^te^{2\tilde{\alpha}ps} \left\|J(s)\right\|^{2p'}ds\\
&+2p'\mathbb{E}\int_{[t]}^te^{2\tilde{\alpha}ps} \left\|J(s)\right\|^{2(p'-1)}\left\langle J(s),\frac{\partial f}{\partial x}(X^{i,\eta}(s),X^{i,\eta}([t]))J(s) \right\rangle ds\\
&+2p'\mathbb{E}\int_{[t]}^te^{2\tilde{\alpha}ps} \left\|J(s)\right\|^{2(p'-1)}\left\langle J(s),\frac{\partial f}{\partial y}(X^{i,\eta}(s),X^{i,\eta}([t]))J([t]) \right\rangle ds\\
&+2p'\mathbb{E}\int_{ [t]}^te^{2\tilde{\alpha}ps} \left\|J(s)\right\|^{2(p'-1)}\left\langle J(s),\mathcal{A}(s)\right\rangle ds\\
&+p'(2p'-1)\mathbb{E}\int_{ [t]}^te^{2\tilde{\alpha}ps} \left\|J(s)\right\|^{2(p'-1)}\bigg\|\frac{\partial g}{\partial x}(X^{i,\eta}(s),X^{i,\eta}([t]))J(s)
\\
&\quad\quad\quad\quad\quad\quad\quad\quad\quad\quad\quad+\frac{\partial g}{\partial y}(X^{i,\eta}(s),X^{i,\eta}([t]))J([t]) +
\mathcal{B}(s)\bigg\|^2ds,
\end{align*}
where
\begin{align*}
\mathcal{A}(s)=&\frac{\partial^2 f}{\partial x^2}(X^{i,\eta}(s),X^{i,\eta}([t]))(H(s),I(s))
+\frac{\partial^2 f}{\partial x\partial y}(X^{i,\eta}(s),X^{i,\eta}([t]))(H(s),I([t]))\\
&+\frac{\partial^2 f}{\partial y\partial x}(X^{i,\eta}(s),X^{i,\eta}([t]))(H([t]),I(s))
+\frac{\partial^2 f}{\partial y^2}(X^{i,\eta}(s),X^{i,\eta}([t]))(H([t]),I([t]))
\end{align*}
and
\begin{align*}
\mathcal{B}(s)=&\frac{\partial^2 g}{\partial x^2}(X^{i,\eta}(s),X^{i,\eta}([t]))(H(s),I(s))
+\frac{\partial^2 g}{\partial x\partial y}(X^{i,\eta}(s),X^{i,\eta}([t]))(H(s),I([t]))\\
&+\frac{\partial^2 g}{\partial y\partial x}(X^{i,\eta}(s),X^{i,\eta}([t]))(H([t]),I(s))
+\frac{\partial^2 g}{\partial y^2}(X^{i,\eta}(s),X^{i,\eta}([t]))(H([t]),I([t])).
\end{align*}
By the estimates of all the partial derivatives of $f$ and $g$ up to order 2 and Young's inequality, we get
\begin{align*}
&e^{2\tilde{\alpha}p't} \mathbb{E}\left\|J(t)\right\|^{2p'}\\
\leq&e^{2\tilde{\alpha}p'[t]} \mathbb{E}\left\|J([t])\right\|^{2p'}+\left(2\tilde{\alpha}-2\lambda_1+1+2\epsilon_1+4\lambda_3(2p'-1)\right)p'\mathbb{E}\int_{[t]}^te^{2\tilde{\alpha}p's} \left\|J(s)\right\|^{2p'}ds\\
&+p'\mathbb{E}\int_{[t]}^te^{2\tilde{\alpha}p's} \left\|J(s)\right\|^{2(p'-1)}\left\|\frac{\partial f}{\partial y}(X^{i,\eta}(s),X^{i,\eta}([t]))J([t]) \right\|^2 ds\\
&+\left(\frac{2p'-1}{2p'\epsilon_1}\right)^{2p'-1}\mathbb{E}\int_{ [t]}^te^{2\tilde{\alpha}p's} \left\|\mathcal{A}(s)\right\|^{2p'} ds
+4p'(2p'-1)\mathbb{E}\int_{ [t]}^te^{2\tilde{\alpha}p's} \left\|J(s)\right\|^{2(p'-1)}\left\|\mathcal{B}(s)\right\|^2ds\\
&+4p'(2p'-1)\mathbb{E}\int_{ [t]}^te^{2\tilde{\alpha}p's} \left\|J(s)\right\|^{2(p'-1)}\left\|\frac{\partial g}{\partial y}(X^{i,\eta}(s),X^{i,\eta}([t]))J([t]) \right\|^2ds\\
    \leq&e^{2\tilde{\alpha}p'[t]} \mathbb{E}\left\|J([t])\right\|^{2p'}+\left(\lambda_2+4\lambda_3(2p'-1)\right)\mathbb{E}\int_{[t]}^te^{2\tilde{\alpha}p's} \left\|J([t]) \right\|^{2p'} ds\\
&+\left(\left(2\tilde{\alpha}-2\lambda_1+1+2\epsilon_1+6\lambda_3(2p'-1)+4(2p'-1)\epsilon_2+\lambda_2\right)p'+\lambda_2+4\lambda_3(2p'-1)\right)\mathbb{E}\int_u^te^{2\tilde{\alpha}p's} \left\|J(s)\right\|^{2p'}ds\\
&+\left(\frac{2p'-1}{2p'\epsilon_1}\right)^{2p'-1}\mathbb{E}\int_{ [t]}^te^{2\tilde{\alpha}p's} \left\|\mathcal{A}(s)\right\|^{2p'} ds
+2(2p'-1)\left(\frac{p'-1}{p'\epsilon_2}\right)^{p'-1}\mathbb{E}\int_{ [t]}^te^{2\tilde{\alpha}p's} \left\|\mathcal{B}(s)\right\|^{2p'}ds.
\end{align*}
Taking  $2\tilde{\alpha}p'=\left(2\lambda_1-1-2\epsilon_1-6\lambda_3(2p'-1)-4(2p'-1)\epsilon_2-\lambda_2\right)p'+\lambda_2+4\lambda_3(2p'-1)$, $\tilde{\beta}=\lambda_2+4\lambda_3(2p'-1)$, $\epsilon_1=\frac{1}{4}$ and $\epsilon_2=\frac{1}{8(2p'-1)}$, then $2\tilde{\alpha}p'=2\lambda_1p'-2p'-\lambda_2p'-6\lambda_3(2p'-1)+\lambda_2+4\lambda_3(2p'-1)$, and
\begin{align*}
e^{2\tilde{\alpha}p't} \mathbb{E}\left\|J(t)\right\|^{2p'}
    \leq&e^{2\tilde{\alpha}p'[t]} \mathbb{E}\left\|J([t])\right\|^{2p'}+\tilde{\beta}\mathbb{E}\int_{[t]}^te^{2\tilde{\alpha}p's} \left\|J([t]) \right\|^{2p'} ds\\
&+C\int_{ [t]}^te^{2\tilde{\alpha}p's}\mathbb{E}\left((1+2\|X^{i,\eta}(s)\|^q)^{p'}\|H(s)\|^{2p'}\|I(s)\|^{2p'}\right) ds\\
&+C\int_{ [t]}^te^{2\tilde{\alpha}p's}\mathbb{E}\left(\|H(s)\|^{2p'}\|I([t])\|^{2p'} \right)ds\\
&+C\int_{ [t]}^te^{2\tilde{\alpha}p's}\mathbb{E}\left((1+2\|X^{i,\eta}(s)\|^q)^{p'}\|H([t])\|^{2p'}\|I(s)\|^{2p'}\right) ds\\
&+C\int_{ [t]}^te^{2\tilde{\alpha}p's}\mathbb{E}\left(\|H([t])\|^{2p'}\|I([t])\|^{2p'}\right) ds,
\end{align*}
where $C$ is independent of $t$. Similarly to the case $u\geq [t]$, we get
\begin{align*}
e^{2\tilde{\alpha}p't} \mathbb{E}\left\|J(t)\right\|^{2p'}
    \leq&e^{2\tilde{\alpha}p'[t]} \mathbb{E}\left\|J([t])\right\|^{2p'}+\tilde{\beta}\mathbb{E}\int_{[t]}^te^{2\tilde{\alpha}p's} \left\|J([t]) \right\|^{2p'} ds\\
&+Ce^{-\nu_1([t]-i)p'/p}\left(1+\mathbb{E}\left\|\mathcal{D}_u\eta\right\|^{2p'}\right)\|\xi\|^{2p'}\int_{ [t]}^te^{2\tilde{\alpha}p's}ds\\
\leq&\left(\frac{\tilde\beta}{2\tilde{\alpha}p'}+\left(1-\frac{\tilde\beta}{2\tilde{\alpha}p'}\right)e^{-2\tilde{\alpha}p'\{t\}}\right)e^{2\tilde{\alpha}p't} \mathbb{E}\left\|J([t])\right\|^{2p'}\\
&+Ce^{-\nu_1([t]-i)p'/p}\left(1+\mathbb{E}\left\|\mathcal{D}_u\eta\right\|^{2p'}\right)\|\xi\|^{2p'}\left(e^{2\tilde{\alpha}p't}-e^{2\tilde{\alpha}p'[t]}\right),
\end{align*}
which implies
\begin{equation*}
\begin{split}
\mathbb{E}\left\|J(t)\right\|^{'}
\leq&\tilde{r}(\{t\}) \mathbb{E}\left\|J([t])\right\|^{2p'}
+Ce^{-\nu_1([t]-i)p'/p}\left(1+\mathbb{E}\left\|\mathcal{D}_u\eta\right\|^{2p'}\right)\|\xi\|^{2p'},
\end{split}
\end{equation*}
where $\tilde{r}(\{t\})=\frac{\tilde\beta}{2\tilde{\alpha}p'}+\left(1-\frac{\tilde\beta}{2\tilde{\alpha}p'}\right)e^{-2\tilde{\alpha}p'\{t\}}$. Since $\lambda_1-\lambda_2-1-2\lambda_3>4\lambda_3(p-1)$, we have $2\tilde{\alpha}p'>\tilde{\beta}$ and $0<\tilde{r}(\{t\})<1$. Similarly to the proof of  Lemma \ref{DX-uniform bounded}, we obtain
\begin{align*}
\mathbb{E}\left\|J(t)\right\|^{2p'}
\leq&\tilde{r}(\{t\}) \mathbb{E}\left\|J([t])\right\|^{2p'}
+Ce^{-\nu_1([t]-i)p'/p}\left(1+\mathbb{E}\left\|\mathcal{D}_u\eta\right\|^{2p'}\right)\|\xi\|^{2p'}\\
\leq&\tilde{r}(\{t\})\tilde{r}(1)^{[t]-[u]-1} \mathbb{E}\left\|J([u]+1)\right\|^{2p'}\\
&+C\left(1+\mathbb{E}\left\|\mathcal{D}_u\eta\right\|^{2p'}\right)\|\xi\|^{2p'}e^{-\nu_1([t]-i)p'/p}\cdot\left(\sum_{j=1}^{[t]-[u]-1}\tilde{r}(1)^{j-1}e^{\nu_1jp'/p}\right)\\
&+C\left(1+\mathbb{E}\left\|\mathcal{D}_u\eta\right\|^{2p'}\right)\|\xi\|^{2p'}e^{-\nu_1([t]-i)p'/p}.
\end{align*}
Without loss of generality, we assume $\tilde{r}(1)e^{\nu_1p'/p}<1$, then
\begin{align*}
\mathbb{E}\left\|J(t)\right\|^{2p'}
\leq&\tilde{r}(\{t\})\tilde{r}(1)^{[t]-[u]-1} \mathbb{E}\left\|J([u]+1)\right\|^{2p'}\\
&+C\left(1+\mathbb{E}\left\|\mathcal{D}_u\eta\right\|^{2p'}\right)\|\xi\|^{2p'}e^{-\nu_1([t]-i-1)p'/p}\\
\leq&\frac{1}{\tilde{r}(1)}e^{(t-[u]-1)\log \tilde{r}(1)}
e^{-2\bar{\alpha}p'([u]+1-u)}\mathbb{E}\left\|J(u)\right\|^{2p'}\\
&+Ce^{-\nu_1([t]-i-1)p'/p}\left(1+\mathbb{E}\left\|\mathcal{D}_u\eta\right\|^{2p'}\right)\|\xi\|^{2p'}\\
\leq&Ce^{-\nu_3(t-u)}\mathbb{E}\left\|J(u)\right\|^{2p'}+Ce^{-\nu_1([t]-i)p'/p}\left(1+\mathbb{E}\left\|\mathcal{D}_u\eta\right\|^{2p'}\right)\|\xi\|^{2p'},
\end{align*}
where $\nu_3=\min\{-\log \tilde {r}(1),2\tilde{\alpha}p'\}$. By the estimates and the uniform boundedness of $X(s)$, we have
$$\mathbb{E}\|J(u)\|^{2p'}\leq C\|\xi\|^{2p'}.$$
Therefore
\begin{equation*}
\begin{split}
\mathbb{E}\left\|J(t)\right\|^{2p'}
\leq&Ce^{-\nu_3(t-u)}\|\xi\|^{2p'}+Ce^{-\nu_1([t]-i)p'/p}\left(1+\mathbb{E}\left\|\mathcal{D}_u\eta\right\|^{2p'}\right)\|\xi\|^{2p'}.
\end{split}
\end{equation*}

\textbf{Case 3.} If $u<i$, then similarly to Case 2, we have
\begin{equation*}
\begin{split}
\mathbb{E}\left\|J(t)\right\|^{2p'}
\leq&Ce^{-\nu_3(t-i)}\|\xi\|^{2p'}+Ce^{-\nu_1([t]-i)p'/p}\left(1+\mathbb{E}\left\|\mathcal{D}_u\eta\right\|^{2p'}\right)\|\xi\|^{2p'}.
\end{split}
\end{equation*}
The proof is completed.
 \end{proof}

 If $f$ and $g$ are continuously differentiable with order 3, then the following lemma holds.
 \begin{lemma}\label{DuDsDX-uniform bounded}
Let $f$ and $g$ have continuous partial derivatives up to order 3 and conditions in Lemma \ref{uniform-bounded-2p} hold with $p\geq4$ . Suppose also that Assumptions \ref{f-polynomial}-\ref{higher derivatives} hold and $\mathbb{E}\left\|\mathcal{D}_u\eta\right\|^{2p}<\infty$, then
\begin{equation}\label{DuDsX-2p bounded}
\begin{split}
\mathbb{E}\left\|\mathcal{D}_w\mathcal{D}_uX^{i,\eta}(t)\xi\right\|^{2p'}\leq &Ce^{-\nu_3(t-w\vee u\vee i)}\mathbb{E}\|\mathcal{D}_w\mathcal{D}_u\eta\|^{2p'}\\
&+Ce^{-\nu_2([t]-w\vee u\vee i)p'/p}\left(1+\mathbb{E}\left\|\mathcal{D}_u\eta\right\|^{2p'}+\mathbb{E}\left\|\mathcal{D}_w\eta\right\|^{2p'}\right)
\end{split}
\end{equation}
and
\begin{equation}\label{DuDsDX-2p bounded}
\begin{split}
\mathbb{E}\left\|\mathcal{D}_w\mathcal{D}_uDX^{i,\eta}(t)\xi\right\|^{2p'} \leq& Ce^{-\nu_3(t-w\vee u\vee i)}\mathbb{E}\|\mathcal{D}_w\mathcal{D}_u\eta\|^{2p'}\\
&+Ce^{-\nu_2([t]-w\vee u\vee i)p'/p}\left(1+\mathbb{E}\left\|\mathcal{D}_u\eta\right\|^{2p'}+\mathbb{E}\left\|\mathcal{D}_w\eta\right\|^{2p'}\right)
\end{split}
\end{equation}
for any $1\leq p'\leq \min\{\frac{p}{4},\frac{p}{q}\}$, $\xi\in\mathbb{R}^d$.
 \end{lemma}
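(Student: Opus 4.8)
The plan is to prove both estimates by the same Itô-and-dissipativity scheme already used in Lemmas \ref{DX-uniform bounded}--\ref{DuDX-uniform bounded}, now carried one differentiation order higher. Writing $H(t):=DX^{i,\eta}(t)\xi$, $I(t):=\mathcal{D}_uX^{i,\eta}(t)$ and $J(t):=\mathcal{D}_uDX^{i,\eta}(t)\xi$ for the quantities whose uniform decaying moment bounds are already in hand, I would first derive the integral equations satisfied by $K(t):=\mathcal{D}_w\mathcal{D}_uX^{i,\eta}(t)$ and $L(t):=\mathcal{D}_w\mathcal{D}_uDX^{i,\eta}(t)\xi$. These follow by applying $\mathcal{D}_w$ to the equation for $I(t)$ from the proof of Lemma \ref{DuX-uniform bounded} and to the equation for $J(t)$ from the proof of Lemma \ref{DuDX-uniform bounded}. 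Differentiating through the coefficients produces in each case a linear self-term with kernels $\frac{\partial f}{\partial x}$ in the drift and $\frac{\partial g}{\partial x}$ in the diffusion acting on $K$ resp.\ $L$, together with inhomogeneous forcing built from products of the lower-order objects $H,I,J,\mathcal{D}_wX^{i,\eta}$ contracted against the second-order partial derivatives of $f,g$ (for $K$) and against the second- and third-order partial derivatives of $f,g$ (for $L$); this is precisely why the hypothesis $f,g\in C^3$ together with Assumptions \ref{f-polynomial} and \ref{higher derivatives} is required.

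Next, for each of $K$ and $L$ I would apply It\^o's formula to $e^{c p't}\|\cdot\|^{2p'}$ on each unit subinterval. The dissipativity estimates \eqref{derivative-f-x1}--\eqref{derivative-g-x1} yield, after the It\^o correction and Young's inequality, a strictly negative effective drift coefficient exactly under the standing condition $\lambda_1-\lambda_2-1-2\lambda_3>4\lambda_3(p-1)$, so one obtains a one-step contraction factor $\tilde r(\{t\})\in(0,1)$ of the same shape as in Lemma \ref{DuDX-uniform bounded}. The forcing is then estimated termwise: every product of lower-order derivatives is split by H\"older's inequality so that each factor lands in a moment for which an exponentially decaying, uniform-in-time bound is already available --- $\mathbb{E}\|H\|^{2p}$ and $\mathbb{E}\|I\|^{2p}$ from Lemmas \ref{DX-uniform bounded} and \ref{DuX-uniform bounded}, $\mathbb{E}\|J\|^{2p'}$ from Lemma \ref{DuDX-uniform bounded}, and, for \eqref{DuDsDX-2p bounded}, also the bound \eqref{DuDsX-2p bounded} just established for $K$ --- while the polynomial growth factors $(1+\|X^{i,\eta}\|^q)$ are absorbed by the uniform moment bound of Lemma \ref{uniform-bounded-2p}. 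This is where the constraint $1\le p'\le\min\{\frac p4,\frac pq\}$ is consumed: it guarantees that every H\"older factor stays within an admissible moment order.

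As in the preceding proofs, the argument is organized by the three cases determined by the relative positions of $w\vee u$, $i$ and $[t]$ (namely $w\vee u\ge[t]$, $i\le w\vee u<[t]$, and $w\vee u<i$), in order to track which boundary contributions $\mathcal{D}_w\mathcal{D}_u\eta$ and $\mathcal{D}_u\eta,\mathcal{D}_w\eta$ enter at the start of the relevant subinterval. Iterating the one-step contraction over the integers between $w\vee u\vee i$ and $t$, then invoking \cite[Lemma 8.2]{Ito} together with the geometric summation of the forced terms (exactly as in the passage from $\tilde r(1)^{[t]-[u]-1}$ to the final estimate in Lemma \ref{DuDX-uniform bounded}), yields the two bounds, with the homogeneous part carrying rate $\nu_3$ on $\mathbb{E}\|\mathcal{D}_w\mathcal{D}_u\eta\|^{2p'}$ and the forced part carrying rate $\nu_2\,p'/p$ on $1+\mathbb{E}\|\mathcal{D}_u\eta\|^{2p'}+\mathbb{E}\|\mathcal{D}_w\eta\|^{2p'}$. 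The main obstacle is the combinatorial bookkeeping: the equation for $L$ contains a large number of mixed forcing terms (all the second- and third-order derivative contractions of the triples formed from $H,I,J$ and the single Malliavin derivatives), and for each of them I must check that the chosen H\"older exponents are simultaneously admissible and that the slowest resulting decay rate is the one recorded in \eqref{DuDsDX-2p bounded}; keeping the effective drift negative while these Young-inequality constants accumulate is the delicate quantitative point.
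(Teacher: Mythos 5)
Your proposal is correct and is precisely the argument the paper intends: the paper in fact states this lemma with no proof at all, leaving it as the one-order-higher analogue of Lemmas~\ref{DX-uniform bounded}, \ref{DuX-uniform bounded} and \ref{DuDX-uniform bounded}, and your plan (apply $\mathcal{D}_w$ to the equations for $\mathcal{D}_uX^{i,\eta}$ and $\mathcal{D}_uDX^{i,\eta}$, isolate the first-derivative self-terms, bound the second- and third-derivative forcing via H\"older against the already-established decaying moment bounds, and run the three-case contraction/iteration with \cite[Lemma 8.2]{Ito}) is exactly that extension, including the correct roles of the $C^3$ hypothesis, Assumptions~\ref{f-polynomial}--\ref{higher derivatives}, the constraint $p'\le\min\{p/4,p/q\}$, and the two decay rates $\nu_3$ and $\nu_2 p'/p$. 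The only caveat, which you yourself flag as the delicate bookkeeping point, is that for the genuinely new triple-product forcing terms (third derivatives contracted with three first-order derivatives, weighted by $(1+2\|X^{i,\eta}\|^q)^{p'}$) the H\"older splitting needs $p'\left(3+q/2\right)\le p$, which is slightly stronger than the stated constraint when $2<q<6$; this affects the paper's unproved statement as much as your reconstruction of it.
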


 \subsection{Errors of $\pi$ and $\pi^\delta$}
Let us first derive the weak error of $X(k)$ and $Y_k$.
 \begin{theorem}\label{weak error}
 Let  conditions in Lemma \ref{uniform-bounded-2p} with $p\geq4$ hold. Suppose also that $\phi\in C_b^3$ and Assumptions \ref{f-polynomial}-\ref{higher derivatives} hold, then  there exist $C>0$  independent of $\delta$ and $\delta_1>0$ such that
 \begin{equation*}
 \left|\mathbb{E}\phi(X^{0,x}(k))-\mathbb{E}\phi(Y^{0,x}_k)\right|\leq C\delta
 \end{equation*}
 for any $\delta\in(0,\delta_1)$.
 \end{theorem}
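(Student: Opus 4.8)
The plan is to reduce the uniform-in-$k$ weak error to a one-unit-time local error carrying an exponentially decaying weight, and then to sum a geometric series. Write $V_i(x):=\mathbb{E}\phi(X^{0,x}(i))$ for the exact value function, let $P_1W(y):=\mathbb{E}[W(X^{0,y}(1))]$ be the exact one-unit transition and $P_1^\delta W(y):=\mathbb{E}[W(Y_1^{0,y})]$ the backward-Euler one-unit transition. By the time-homogeneous Markov property (\thmref{Markov Property} and \thmref{Markov chain-nm}) one has $\mathbb{E}\phi(X^{0,x}(k))=P_1^k\phi(x)$, $\mathbb{E}\phi(Y_k^{0,x})=(P_1^\delta)^k\phi(x)$ and $V_i=P_1V_{i-1}$, so using the algebraic identity $A^k-B^k=\sum_{j=0}^{k-1}B^j(A-B)A^{k-1-j}$ I would telescope
\begin{equation*}
\mathbb{E}\phi(X^{0,x}(k))-\mathbb{E}\phi(Y_k^{0,x})
=\sum_{j=0}^{k-1}(P_1^\delta)^{j}\big[(P_1-P_1^\delta)V_{k-1-j}\big](x)
=\sum_{j=0}^{k-1}\mathbb{E}\Big[\big((P_1-P_1^\delta)V_{k-1-j}\big)(Y_j^{0,x})\Big].
\end{equation*}
Everything then rests on a pointwise bound $|(P_1-P_1^\delta)V_i(y)|\le C\delta\,e^{-c i}(1+\|y\|^{Q})$, after which the uniform moment bounds of $Y_j$ from \lemref{BE-uniform-bounded-2p} and \corref{Boundedness of BE method} turn the sum into a convergent geometric series $\sum_{j}C\delta\,e^{-c(k-1-j)}(1+\|x\|^{Q})\le C'\delta(1+\|x\|^{Q})$ that is independent of $k$.

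For the local estimate I would exploit that over a single unit interval the piecewise-continuous argument is frozen: both $X^{0,y}(\cdot)$ and the $m=1/\delta$ backward-Euler substeps producing $Y_1^{0,y}$ evolve under the parametrised SDE $dZ=f(Z,y)\,dt+g(Z,y)\,dB$ on $[0,1]$, the only difference being the drift-implicit, diffusion-explicit discretisation. First I would perform an It\^o--Taylor expansion of the exact transition together with the backward (Kolmogorov) expansion for $V_i$, and an analogous expansion of one substep through the representation $X_{l+1}=G^{-1}(X_l+g(X_l,y)\Delta B_l)$, controlling the implicit increment via the dissipativity-based invertibility of $G$. Comparing the two and summing the $m$ substeps should show that each substep contributes an $O(\delta^2)$ discrepancy measured against the first three derivatives of $V_i$ and a polynomial weight in the state, so that one unit of time accumulates an $O(\delta)$ error; the polynomial weights are closed by \lemref{uniform-bounded-2p} and \lemref{BE-uniform-bounded-2p}.

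The delicate point, and the reason the Malliavin estimates were prepared, is twofold. First, the leading local terms contain It\^o integrals of anticipating integrands multiplied by derivatives of $V_i$ at intermediate states, which do not vanish in expectation; I would remove them with the Malliavin integration-by-parts (duality) formula, trading each such integral for a Lebesgue integral of a Malliavin derivative and thereby recovering the missing power of $\delta$. Second, rather than differentiating the flow to second and third Fr\'echet order, the same duality lets one recast the higher-order terms so that only the first Fr\'echet derivative $DX$ together with its Malliavin derivatives $\mathcal{D}_uDX$ and $\mathcal{D}_w\mathcal{D}_uDX$ appear. The uniform-in-time, exponentially decaying bounds for exactly these objects, namely \lemref{DX-uniform bounded}, \lemref{DuX-uniform bounded}, \lemref{DuDX-uniform bounded} and \lemref{DuDsDX-uniform bounded}, then yield simultaneously the order $O(\delta)$ and the decay $e^{-ci}$: the first derivative already satisfies $DV_i(y)\xi=\mathbb{E}[\langle\nabla\phi(X^{0,y}(i)),DX^{0,y}(i)\xi\rangle]$, which decays because $DX^{0,y}(i)$ does, and the higher-order contributions decay through the same variation and Malliavin processes.

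I expect the main obstacle to be precisely this derivation of a \emph{time-independent} local weak error with built-in exponential decay. Concretely one must (i) expand the drift-implicit step to the needed order while bounding the implicit correction $X_{l+1}-X_l$ uniformly by dissipativity, (ii) organise the numerous remainder terms so that each carries either an extra factor of $\delta$ or a genuinely decaying weight, and (iii) justify the Malliavin integration by parts for each such term and estimate the resulting mixed Fr\'echet--Malliavin derivatives uniformly in time via the a priori lemmas above. Taking $\delta_1$ to be the minimum of the step-size thresholds required by \lemref{BE-uniform-bounded-2p}, \lemref{BE-Malliavin-uniform-bounded-2p} and \lemref{DuDsDX-uniform bounded}, the pointwise bound $|(P_1-P_1^\delta)V_i(y)|\le C\delta\,e^{-c i}(1+\|y\|^{Q})$ together with the geometric summation and the moment bounds completes the proof with $C$ independent of $\delta$ and of $k$.
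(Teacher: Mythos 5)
Your proposal follows essentially the same route as the paper's proof: your operator telescoping $\sum_{j}(P_1^\delta)^{j}(P_1-P_1^\delta)P_1^{k-1-j}\phi(x)$ is algebraically identical to the paper's decomposition $\sum_{i}\big(\mathbb{E}\phi(X^{i+1,X^{0,x}_{(i+1)m}}(k))-\mathbb{E}\phi(X^{i,X^{0,x}_{im}}(k))\big)$, and your local analysis — expanding the one-unit discrepancy over the $m$ substeps with the delayed argument frozen, removing the non-vanishing stochastic-integral terms by Malliavin duality so that only $DX$, $\mathcal{D}_uX$, $\mathcal{D}_uDX$ and $\mathcal{D}_w\mathcal{D}_uDX$ (Lemmas \ref{DX-uniform bounded}--\ref{DuDsDX-uniform bounded}) rather than higher Fr\'echet derivatives of the flow appear, and summing the resulting exponentially weighted $O(\delta^2)$ contributions as a geometric series — is exactly the paper's mechanism. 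The only difference is organizational: the paper keeps each telescoped term under a single expectation, applying the mean value theorem to $\phi\circ X$ and H\"older's inequality directly, instead of first isolating your intermediate pointwise bound on $(P_1-P_1^\delta)V_i$ and then invoking the moment bounds of $Y_j$, but the estimates involved are the same.
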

 \begin{proof}
 For any $\phi\in C_b^3$,
 \begin{equation*}
\begin{split}
&\left|\mathbb{E}\phi(X^{0,x}(k))-\mathbb{E}\phi(Y^{0,x}_k)\right|\\
=&\left|\sum_{i=0}^{k-1}\left(\mathbb{E}\phi(X^{i+1,X_{(i+1)m}^{0,x}}(k))-\mathbb{E}
\phi(X^{i,X_{im}^{0,x}}(k))\right)\right|\\
=&\left|\sum_{i=0}^{k-1}\left(\mathbb{E}\phi(X^{i+1,X_{(i+1)m}^{0,x}}(k))-\mathbb{E}
\phi(X^{i+1,X^{i,X_{im}^{0,x}}(i+1)}(k))\right)\right|\\
\leq&\sum_{i=0}^{k-1}\left|\mathbb{E}\phi(X^{i+1,X_{(i+1)m}^{0,x}}(k))-\mathbb{E}
\phi(X^{i+1,X^{i,X_{im}^{0,x}}(i+1)}(k))\right|\\
=&\sum_{i=0}^{k-1}\left|\mathbb{E}\!\!\int_0^1\!D(\phi\circ X)(k; i+1,\theta X_{(i+1)m}^{0,x}+(1-\theta)X^{i,X_{im}^{0,x}}(i+1))\left(X_{(i+1)m}^{0,x}-X^{i,X_{im}^{0,x}}(i+1)\right)d\theta\right|.
\end{split}
\end{equation*}
Denote $D(\phi\circ X)_{i+1}(k,\theta)=D(\phi\circ X)(k; i+1,\theta X_{(i+1)m}^{0,x}+(1-\theta)X^{i,X_{im}^{0,x}}(i+1))$. From
 \begin{align*}
&X_{(i+1)m}^{0,x}-X^{i,X_{im}^{0,x}}(i+1)
=X_{(i+1)m}^{i,X_{im}^{0,x}}-X^{i,X_{im}^{0,x}}(i+1)\\
=&\delta\sum_{l=0}^{m-1}f(X_{im+l+1},X_{im}^{0,x})+\sum_{l=0}^{m-1}g(X_{im+l},X_{im}^{0,x})\Delta_{im+l}\\
&-\int_i^{i+1}f(X^{i,X_{im}^{0,x}}(s),X_{im}^{0,x})ds-\int_i^{i+1}g(X^{i,X_{im}^{0,x}}(s),X_{im}^{0,x})dB(s)\\
=&\sum_{l=0}^{m-1}\int_{t_{im+l}}^{t_{im+l+1}}\left(f(X_{im+l+1},X_{im}^{0,x})-f(X^{i,X_{im}^{0,x}}(s),X_{im}^{0,x})\right)ds\\
&+\sum_{l=0}^{m-1}\int_{t_{im+l}}^{t_{im+l+1}}\left(g(X_{im+l},X_{im}^{0,x})-g(X^{i,X_{im}^{0,x}}(s),X_{im}^{0,x})\right)dB(s)\\
=&\sum_{l=0}^{m-1}\int_{t_{im+l}}^{t_{im+l+1}}\left(f(X_{im+l+1},X_{im}^{0,x})-f(X_{im+l},X_{im}^{0,x})\right)ds\\
&+\sum_{l=0}^{m-1}\int_{t_{im+l}}^{t_{im+l+1}}\left(f(X_{im+l},X_{im}^{0,x})-f(X^{i,X_{im}^{0,x}}(s),X_{im}^{0,x})\right)ds\\
&+\sum_{l=0}^{m-1}\int_{t_{im+l}}^{t_{im+l+1}}\left(g(X_{im+l},X_{im}^{0,x})-g(X^{i,X_{im}^{0,x}}(s),X_{im}^{0,x})\right)dB(s),
\end{align*}
it follows that
\begin{align*}
&\left|\mathbb{E}\phi(X^{0,x}(k))-\mathbb{E}\phi(Y^{0,x}_k)\right|\\
\leq&\sum_{i=0}^{k-1}\sum_{l=0}^{m-1}\left|\mathbb{E}\!\!\int_0^1\!D(\phi\circ X)_{i+1}(k,\theta)\cdot\int_{t_{im+l}}^{t_{im+l+1}}\left(f(X_{im+l+1},X_{im}^{0,x})-f(X_{im+l},X_{im}^{0,x})\right)dsd\theta\right|\\
&+\sum_{i=0}^{k-1}\sum_{l=0}^{m-1}\left|\mathbb{E}\!\!\int_0^1\!D(\phi\circ X)_{i+1}(k,\theta)\cdot\int_{t_{im+l}}^{t_{im+l+1}}\left(f(X^{i,X_{im}^{0,x}}(s),X_{im}^{0,x})-f(X_{im+l},X_{im}^{0,x})\right)dsd\theta\right|\\
&+\sum_{i=0}^{k-1}\sum_{l=0}^{m-1}\left|\mathbb{E}\!\!\int_0^1\!D(\phi\circ X)_{i+1}(k,\theta)\cdot\int_{t_{im+l}}^{t_{im+l+1}}\left(g(X^{i,X_{im}^{0,x}}(s),X_{im}^{0,x})-g(X_{im+l},X_{im}^{0,x})\right)dB(s)d\theta\right|\\
=&:\sum_{i=0}^{k-1}\sum_{l=0}^{m-1}\left(I_1+I_2+I_3\right).
\end{align*}
Denote $X_{im+l+\tau}:=\tau X_{im+l+1}+(1-\tau)X_{im+l}$. Then the estimate of  $I_1$ is
\begin{equation*}
\begin{split}
I_1=&\delta\left|\mathbb{E}\!\!\int_0^1\!\!\int_0^1D(\phi\circ X)_{i+1}(k,\theta)\cdot\frac{\partial f}{\partial x}(X_{im+l+\tau},X_{im}^{0,x})\left(X_{im+l+1}-X_{im+l}\right)d\tau d\theta\right|\\
\leq&\delta^2\!\!\int_0^1\!\!\int_0^1\bigg|\mathbb{E}\left(D(\phi\circ X)_{i+1}(k,\theta)\cdot\frac{\partial f}{\partial x}(X_{im+l+\tau},X_{im}^{0,x}) f(X_{im+l+1},X^{0,x}_{im})\right)\bigg|d\tau d\theta\\
&+\delta\int_0^1\!\!\int_0^1\bigg|\mathbb{E}\left(D(\phi\circ X)_{i+1}(k,\theta)\cdot\frac{\partial f}{\partial x}(X_{im+l+\tau},X_{im}^{0,x}) g(X_{im+l},X^{0,x}_{im})\Delta B_{im+l}\right)\bigg|d\tau d\theta\\
=&:I_{11}+I_{12}.
\end{split}
\end{equation*}
The chain rule of the Fr\'echet derivative leads to
$$D(\phi\circ X)_{i+1}(k,\theta)=D\phi(X^{i+1,\theta X^{0,x}_{(i+1)m}+(1-\theta) X^{i,X^{0,x}_{im}}(i+1)}(k))\cdot DX^{i+1,\theta X^{0,x}_{(i+1)m}+(1-\theta) X^{i,X^{0,x}_{im}}(i+1)}(k).$$
From $\phi\in C_b^1$, H\"{o}lder inequality and Lemma \ref{DX-uniform bounded}, it follows
\begin{align*}
&I_{11}\leq C\delta^2\!\!\int_0^1\!\!\int_0^1\bigg\|\mathbb{E}\left(\!DX^{i+1,\theta X^{0,x}_{(i+1)m}+(1-\theta) X^{i,X^{0,x}_{im}}(i+1)}(k)\cdot\frac{\partial f}{\partial x}(X_{im+l+\tau},X_{im}^{0,x}) f(X_{im+l+1},X^{0,x}_{im})\!\right)\bigg\|d\tau d\theta\\
\leq&C\delta^2\!\!\int_0^1\!\!\int_0^1\left(\mathbb{E}\left\|\!DX^{i+1,\theta X^{0,x}_{(i+1)m}+(1-\theta) X^{i,X^{0,x}_{im}}(i+1)}(k)\cdot\frac{\partial f}{\partial x}(X_{im+l+\tau},X_{im}^{0,x}) f(X_{im+l+1},X^{0,x}_{im})\!\right\|^2\right)^{\frac{1}{2}}d\tau d\theta\\
\leq&Ce^{-\frac{1}{2}\nu_1(k-i-1)}\delta^2\int_0^1\left(\mathbb{E}\left\|\frac{\partial f}{\partial x}(X_{im+l+\tau},X_{im}^{0,x}) f(X_{im+l+1},X^{0,x}_{im})\!\right\|^2\right)^{\frac{1}{2}}d\tau \\
\leq&Ce^{-\frac{1}{2}\nu_1(k-i-1)}\delta^2\int_0^1\bigg(\mathbb{E}\bigg(2Kd(1+\|X_{im+l+\tau}\|^q)\|X_{im+l+\tau}\|^2\left\|f(X_{im+l+1},X^{0,x}_{im})\right\|^2\\
&\qquad\qquad+4Kd\|X_{im}^{0,x}\|^2\left\| f(X_{im+l+1},X^{0,x}_{im})\right\|^2+4\left\|\frac{\partial f}{\partial x}(0,0)f(X_{im+l+1},X^{0,x}_{im})\right\|^2\bigg)\bigg)^{\frac{1}{2}}d\tau.
\end{align*}
By the $L^{2p}$ ($p\geq 4$) uniform boundedness of the numerical solutions and Assumptions \ref{f-polynomial} and \ref{higher derivatives},
\begin{equation*}
\begin{split}
I_{11}
\leq Ce^{-\frac{1}{2}\nu_1(k-i-1)}\delta^2.
\end{split}
\end{equation*}
For $I_{12}$, the duality formula of Malliavin derivative \cite[P. 43]{Nualart2006} leads to
\begin{equation*}
\begin{split}
I_{12}=&\delta\int_0^1\!\!\int_0^1\bigg\|\mathbb{E}\int_{t_{im+l}}^{t_{im+l+1}}\mathcal{D}_uD(\phi\circ X)_{i+1}(k,\theta)\cdot\frac{\partial f}{\partial x}(X_{im+l+\tau},X_{im}^{0,x}) g(X_{im+l},X^{0,x}_{im})du\bigg\|d\tau d\theta\\
\leq&\delta\int_0^1\!\!\int_0^1\!\!\int_{t_{im+l}}^{t_{im+l+1}}\bigg\|\mathbb{E}\left(\mathcal{D}_uD(\phi\circ X)_{i+1}(k,\theta)\cdot\frac{\partial f}{\partial x}(X_{im+l+\tau},X_{im}^{0,x}) g(X_{im+l},X^{0,x}_{im})\right)\bigg\|dud\tau d\theta
\end{split}
\end{equation*}
 Then by the chain rule for Fr\'echet derivatives and the chain rule as well as the product rule for  Malliavin derivatives \cite[P. 37]{Nualart2006}, we obtain
\begin{equation*}
\begin{split}
&\mathcal{D}_uD(\phi\circ X)_{i+1}(k,\theta)\xi\\
=&\mathcal{D}_u\left(D\phi( X^{i+1,\theta X_{(i+1)m}^{0,x}+(1-\theta)X^{i,X^{0,x}_{im}}(i+1)}(k))\cdot DX^{i+1,\theta X_{(i+1)m}^{0,x}+(1-\theta)X^{i,X^{0,x}_{im}}(i+1)}(k)\xi\right)\\
=&\left(\mathcal{D}_uX^{i+1,\theta X_{(i+1)m}^{0,x}+(1-\theta)X^{i,X^{0,x}_{im}}(i+1)}(k)\right)^T\cdot D^2\phi( X^{i+1,\theta X_{(i+1)m}^{0,x}+(1-\theta)X^{i,X^{0,x}_{im}}(i+1)}(k))\\
&\qquad\qquad\qquad\qquad\qquad\qquad\qquad\qquad\qquad\cdot DX^{i+1,\theta X_{(i+1)m}^{0,x}+(1-\theta)X^{i,X^{0,x}_{im}}(i+1)}(k)\xi\\
&+ D\phi( X^{i+1,\theta X_{(i+1)m}^{0,x}+(1-\theta)X^{i,X^{0,x}_{im}}(i+1)}(k))\cdot\mathcal{D}_uDX^{i+1,\theta X_{(i+1)m}^{0,x}+(1-\theta)X^{i,X^{0,x}_{im}}(i+1)}(k)\xi.
\end{split}
\end{equation*}
Thus,  Lemmas  \ref{DX-uniform bounded}-\ref{DuDX-uniform bounded}, Assumptions \ref{assumption2}, \ref{f-polynomial} and $\phi\in C_b^2$ lead to
\begin{align*}
I_{12}
\leq&\delta\!\!\!\int_0^1\!\!\!\int_0^1\!\!\!\int_{t_{im+l}}^{t_{im+l+1}}\!\bigg\|\mathbb{E}\bigg(\!\!\left(\mathcal{D}_uX^{i+1,\theta X_{(i+1)m}^{0,x}+(1-\theta)X^{i,X^{0,x}_{im}}(i+1)}(k)\!\!\right)^T\!\!\cdot \!\!D^2\phi( X^{i+1,\theta X_{(i+1)m}^{0,x}+(1-\theta)X^{i,X^{0,x}_{im}}(i+1)}(k))\\
&\quad\qquad\qquad\cdot DX^{i+1,\theta X_{(i+1)m}^{0,x}+(1-\theta)X^{i,X^{0,x}_{im}}(i+1)}(k)
\cdot\frac{\partial f}{\partial x}(X_{im+l+\tau},X_{im}^{0,x}) g(X_{im+l},X^{0,x}_{im})\bigg)\bigg\|dud\tau d\theta\\
&+\delta\!\!\!\int_0^1\!\!\!\int_0^1\!\!\!\int_{t_{im+l}}^{t_{im+l+1}}\!\bigg\|\mathbb{E}\bigg(\!\!D\phi( X^{i+1,\theta X_{(i+1)m}^{0,x}+(1-\theta)X^{i,X^{0,x}_{im}}(i+1)}(k))\cdot\mathcal{D}_uDX^{i+1,\theta X_{(i+1)m}^{0,x}+(1-\theta)X^{i,X^{0,x}_{im}}(i+1)}(k)\\
&\quad\qquad\qquad\qquad\qquad\qquad\qquad\qquad\qquad\qquad\qquad\cdot\frac{\partial f}{\partial x}(X_{im+l+\tau},X_{im}^{0,x}) g(X_{im+l},X^{0,x}_{im})\bigg)\bigg\|dud\tau d\theta\\
\leq&C\delta\!\!\!\int_0^1\!\!\!\int_0^1\!\!\!\int_{t_{im+l}}^{t_{im+l+1}}\!\mathbb{E}\bigg\|\!\!\left(\mathcal{D}_uX^{i+1,\theta X_{(i+1)m}^{0,x}+(1-\theta)X^{i,X^{0,x}_{im}}(i+1)}(k)\!\!\right)^T\!\!\cdot DX^{i+1,\theta X_{(i+1)m}^{0,x}+(1-\theta)X^{i,X^{0,x}_{im}}(i+1)}(k)\\
&\qquad\qquad\qquad\qquad\qquad\qquad\qquad\qquad\qquad\qquad\cdot\frac{\partial f}{\partial x}(X_{im+l+\tau},X_{im}^{0,x}) g(X_{im+l},X^{0,x}_{im})\bigg\|dud\tau d\theta\\
&+C\delta\!\!\!\int_0^1\!\!\!\int_0^1\!\!\!\int_{t_{im+l}}^{t_{im+l+1}}\!\bigg\|\mathbb{E}\bigg(\mathcal{D}_uDX^{i+1,\theta X_{(i+1)m}^{0,x}+(1-\theta)X^{i,X^{0,x}_{im}}(i+1)}(k)\\
&\quad\qquad\qquad\qquad\qquad\qquad\qquad\qquad\qquad\qquad\qquad\cdot\frac{\partial f}{\partial x}(X_{im+l+\tau},X_{im}^{0,x}) g(X_{im+l},X^{0,x}_{im})\bigg)\bigg\|dud\tau d\theta\\
\leq&C\delta\!\!\!\int_0^1\!\!\!\int_0^1\!\!\!\int_{t_{im+l}}^{t_{im+l+1}}\!\left(\mathbb{E}\left\|\mathcal{D}_uX^{i+1,\theta X_{(i+1)m}^{0,x}+(1-\theta)X^{i,X^{0,x}_{im}}(i+1)}(k)\right\|^2\right)^{\frac{1}{2}}\\
&\cdot \left(\mathbb{E}\left\|DX^{i+1,\theta X_{(i+1)m}^{0,x}+(1-\theta)X^{i,X^{0,x}_{im}}(i+1)}(k)
\cdot\frac{\partial f}{\partial x}(X_{im+l+\tau},X_{im}^{0,x}) g(X_{im+l},X^{0,x}_{im})\right\|^2\right)^{\frac{1}{2}}dud\tau d\theta\\
&+C\delta\!\!\!\int_0^1\!\!\!\int_0^1\!\!\!\int_{t_{im+l}}^{t_{im+l+1}}\!\bigg(\mathbb{E}\bigg\|\mathcal{D}_uDX^{i+1,\theta X_{(i+1)m}^{0,x}+(1-\theta)X^{i,X^{0,x}_{im}}(i+1)}(k)\\
&\quad\qquad\qquad\qquad\qquad\qquad\qquad\qquad\qquad\qquad\cdot\frac{\partial f}{\partial x}(X_{im+l+\tau},X_{im}^{0,x}) g(X_{im+l},X^{0,x}_{im})\bigg\|^2\bigg)^{\frac{1}{2}}dud\tau d\theta\\
\leq&Ce^{-\frac{1}{2}(\nu_1+\nu_2)(k-i-1)}\delta^2+\left(Ce^{-\frac{1}{2}\nu_3(k-i-1)}+Ce^{-\frac{1}{2p}\nu_1(k-i-1)}\right)\delta^2\\
\leq&Ce^{-\frac{1}{2}\nu_4(k-i-1)}\delta^2,
\end{align*}
where $\nu_4=\min\{\nu_3,\frac{1}{p}\nu_1\}$ and $u\leq i+1$ are used.

Next, we estimate $I_2$.  It\^o's formula implies
\begin{align*}
I_2=&\left|\mathbb{E}\!\!\int_0^1\int_{t_{im+l}}^{t_{im+l+1}}\!D(\phi\circ X)_{i+1}(k,\theta)\cdot\left(f(X^{i,X_{im}^{0,x}}(s),X_{im}^{0,x})-f(X_{im+l},X_{im}^{0,x})\right)dsd\theta\right|\\
\leq&\left|\mathbb{E}\!\!\int_0^1\int_{t_{im+l}}^{t_{im+l+1}}\!D(\phi\circ X)_{i+1}(k,\theta)\cdot\left(f(X^{i,X_{im}^{0,x}}(s),X_{im}^{0,x})-f(X^{i,X_{im}^{0,x}}(t_{im+l}),X_{im}^{0,x})
\right)dsd\theta\right|\\
&+\delta\left|\mathbb{E}\!\!\int_0^1\!D(\phi\circ X)_{i+1}(k,\theta)\cdot\left(f(X^{i,X_{im}^{0,x}}(t_{im+l}),X_{im}^{0,x})
-f(X_{im+l},X_{im}^{0,x})\right)d\theta\right|\\
\leq&\left|\mathbb{E}\!\!\int_0^1\int_{t_{im+l}}^{t_{im+l+1}}\!D(\phi\circ X)_{i+1}(k,\theta)\cdot\int_{t_{im+l}}^s\frac{\partial f}{\partial x}(X^{i,X_{im}^{0,x}}(u),X_{im}^{0,x})f(X^{i,X_{im}^{0,x}}(u),X_{im}^{0,x})dudsd\theta\right|\\
&+\frac{1}{2}\bigg|\mathbb{E}\!\!\int_0^1\int_{t_{im+l}}^{t_{im+l+1}}\!D(\phi\circ X)_{i+1}(k,\theta)\cdot\int_{t_{im+l}}^s\frac{\partial^2 f}{\partial x^2}(X^{i,X_{im}^{0,x}}(u),X_{im}^{0,x})\\
&\qquad\qquad\qquad\qquad\qquad\qquad\qquad\qquad\qquad\qquad(g(X^{i,X_{im}^{0,x}}(u),X_{im}^{0,x}),g(X^{i,X_{im}^{0,x}}(u),X_{im}^{0,x}))dudsd\theta\bigg|\\
&+\left|\mathbb{E}\!\!\int_0^1\int_{t_{im+l}}^{t_{im+l+1}}\!D(\phi\circ X)_{i+1}(k,\theta)\cdot\int_{im+l}^s\frac{\partial f}{\partial x}(X^{i,X_{im}^{0,x}}(u),X_{im}^{0,x})g(X^{i,X_{im}^{0,x}}(u),X_{im}^{0,x})dB(u)dsd\theta\right|\\
&+\delta\left|\mathbb{E}\!\!\int_0^1\!D(\phi\circ X)_{i+1}(k,\theta)\cdot\left(f(X^{i,X_{im}^{0,x}}(t_{im+l}),X_{im}^{0,x})
-f(X_{im+l},X_{im}^{0,x})\right)d\theta\right|\\
=&:I_{21}+I_{22}+I_{23}+I_{24}.
\end{align*}
By  Assumptions \ref{assumption2} and \ref{f-polynomial}, the $L_{2p}$ $(p\geq4)$ uniform boundedness of $X^{i,\eta}(t)$ and the chain rule for Fr\'echet derivatives, and applying H\"{o}lder's inequality and Lemma \ref{DX-uniform bounded}, we have
\begin{equation*}
\begin{split}
I_{21}
\leq&\!\!\int_0^1\!\!\int_{t_{im+l}}^{t_{im+l+1}}\!\!\!\int_{t_{im+l}}^s\!\left(\mathbb{E}\!\left|D(\phi\circ X)_{i+1}(k,\theta)\cdot\frac{\partial f}{\partial x}(X^{i,X_{im}^{0,x}}(u),X_{im}^{0,x})f(X^{i,X_{im}^{0,x}}(u),X_{im}^{0,x})\right|^2\right)^{\frac{1}{2}}dudsd\theta\\
\leq&Ce^{-\frac{1}{2}\nu_1(k-i-1)}\delta^2,
\end{split}
\end{equation*}
and
\begin{equation*}
\begin{split}
I_{22}\leq&\frac{1}{2}\!\int_0^1\!\!\int_{t_{im+l}}^{t_{im+l+1}}\!\!\int_{t_{im+l}}^s\bigg(\mathbb{E}\bigg\|D(\phi\circ X)_{i+1}(k,\theta)\cdot\frac{\partial^2 f}{\partial x^2}(X^{i,X_{im}^{0,x}}(u),X_{im}^{0,x})\\
&\quad\quad\qquad\qquad\qquad\qquad\qquad\qquad\qquad\qquad(g(X^{i,X_{im}^{0,x}}(u),X_{im}^{0,x}),g(X^{i,X_{im}^{0,x}}(u),X_{im}^{0,x}))\bigg\|^2\bigg)^{\frac{1}{2}}dudsd\theta\\
\leq&Ce^{-\frac{1}{2}\nu_1(k-i-1)}\delta^2.
\end{split}
\end{equation*}
For $I_{23}$, the similar estimate of $I_{12}$ implies
\begin{equation*}
\begin{split}
I_{23}\leq Ce^{-\frac{1}{2}\nu_4(k-i-1)}\delta^2.
\end{split}
\end{equation*}
For $I_{24}$, similar as above, there exists $\nu_5>0$ such that
\begin{equation*}
\begin{split}
I_{24}\leq Ce^{-\nu_5(k-i-1)}\delta^2.
\end{split}
\end{equation*}

The estimate of $I_3$ is as follows.  It\^o's formula leads to
\begin{align*}
I_3=&\left|\mathbb{E}\!\!\int_0^1\!D(\phi\circ X)_{i+1}(k,\theta)\cdot\int_{t_{im+l}}^{t_{im+l+1}}\left(g(X^{i,X_{im}^{0,x}}(s),X_{im}^{0,x})-g(X_{im+l},X_{im}^{0,x})\right)dB(s)d\theta\right|\\
=&\left|\mathbb{E}\!\!\int_0^1\!D(\phi\circ X)_{i+1}(k,\theta)\cdot\int_{t_{im+l}}^{t_{im+l+1}}\!\!\int_{t_{im+l}}^s\frac{\partial g}{\partial x}(X^{i,X_{im}^{0,x}}(u),X_{im}^{0,x})f(X^{i,X_{im}^{0,x}}(u),X_{im}^{0,x})dudB(s)d\theta\right|\\
&+\frac{1}{2}\bigg|\mathbb{E}\!\!\int_0^1\!D(\phi\circ X)_{i+1}(k,\theta)\cdot\int_{t_{im+l}}^{t_{im+l+1}}\!\!\int_{t_{im+l}}^s\frac{\partial^2 g}{\partial x^2}(X^{i,X_{im}^{0,x}}(u),X_{im}^{0,x})\\
&\quad\quad\qquad\qquad\qquad\qquad\qquad\qquad\qquad\qquad(g(X^{i,X_{im}^{0,x}}(u),X_{im}^{0,x}),g(X^{i,X_{im}^{0,x}}(u),X_{im}^{0,x}))dudB(s)d\theta\bigg|\\
&+\left|\mathbb{E}\!\!\int_0^1\!\!D(\phi\circ X)_{i+1}(k,\theta)\!\cdot\!\!\int_{t_{im+l}}^{t_{im+l+1}}\!\!\!\int_{t_{im+l}}^s\frac{\partial g}{\partial x}(X^{i,X_{im}^{0,x}}(u),X_{im}^{0,x})g(X^{i,X_{im}^{0,x}}(u),X_{im}^{0,x})dB(u)dB(s)d\theta\right|\\
=&:I_{31}+I_{32}+I_{33}.
\end{align*}
The estimates of $I_{31}$ and $I_{32}$ are similar to that  of $I_{12}$,
\begin{align*}
I_{31}=&\left|\int_0^1\!\!\int_{t_{im+l}}^{t_{im+l+1}}\!\!\mathbb{E}\left(\mathcal{D}_sD(\phi\circ X)_{i+1}(k,\theta)\!\cdot\!\!\int_{t_{im+l}}^s\frac{\partial g}{\partial x}(X^{i,X_{im}^{0,x}}(u),X_{im}^{0,x})f(X^{i,X_{im}^{0,x}}(v),X_{im}^{0,x})du\right)dsd\theta\right|\\
\leq&\int_0^1\!\!\int_{t_{im+l}}^{t_{im+l+1}}\!\!\int_{t_{im+l}}^s\!\!\mathbb{E}\left|\mathcal{D}_sD(\phi\circ X)_{i+1}(k,\theta)\!\cdot\frac{\partial g}{\partial x}(X^{i,X_{im}^{0,x}}(u),X_{im}^{0,x})f(X^{i,X_{im}^{0,x}}(u),X_{im}^{0,x})\right|dudsd\theta\\
\leq&Ce^{-\frac{1}{2}\nu_4(k-i-1)}\delta^2
\end{align*}
and
\begin{align*}
I_{32}=&
\frac{1}{2}\bigg|\!\int_0^1\!\!\int_{t_{im+l}}^{t_{im+l+1}}\!\mathbb{E}\bigg(\mathcal{D}_sD(\phi\circ X)_{i+1}(k,\theta)\cdot\!\!\int_{t_{im+l}}^s\frac{\partial^2 g}{\partial x^2}(X^{i,X_{im}^{0,x}}(u),X_{im}^{0,x})\\
&\quad\quad\qquad\qquad\qquad\qquad\qquad\qquad\qquad\qquad(g(X^{i,X_{im}^{0,x}}(u),X_{im}^{0,x}),g(X^{i,X_{im}^{0,x}}(u),X_{im}^{0,x}))\bigg)dudsd\theta\bigg|\\
\leq&\frac{1}{2}\!\int_0^1\!\!\int_{t_{im+l}}^{t_{im+l+1}}\!\!\int_{t_{im+l}}^s\mathbb{E}\bigg|\mathcal{D}_sD(\phi\circ X)_{i+1}(k,\theta)\cdot\frac{\partial^2 g}{\partial x^2}(X^{i,X_{im}^{0,x}}(u),X_{im}^{0,x})\\
&\quad\quad\qquad\qquad\qquad\qquad\qquad\qquad\qquad\qquad(g(X^{i,X_{im}^{0,x}}(u),X_{im}^{0,x}),g(X^{i,X_{im}^{0,x}}(u),X_{im}^{0,x}))\bigg|dudsd\theta\\
\leq&Ce^{-\frac{1}{2}\nu_4(k-i-1)}\delta^2.
\end{align*}
For $I_{33}$, we obtain
\begin{align*}
I_{33}=
&\left|\mathbb{E}\!\!\int_0^1\!\!D(\phi\circ X)_{i+1}(k,\theta)\!\cdot\!\!\int_{t_{im+l}}^{t_{im+l+1}}\!\!\!\int_{t_{im+l}}^s\frac{\partial g}{\partial x}(X^{i,X_{im}^{0,x}}(u),X_{im}^{0,x})g(X^{i,X_{im}^{0,x}}(u),X_{im}^{0,x})dB(u)dB(s)d\theta\right|\\
=&\left|\int_0^1\!\!\int_{t_{im+l}}^{t_{im+l+1}}\!\!\mathbb{E}\!\left(\mathcal{D}_sD(\phi\circ X)_{i+1}(k,\theta)\!\cdot\!\!\!\int_{t_{im+l}}^s\frac{\partial g}{\partial x}(X^{i,X_{im}^{0,x}}(u),X_{im}^{0,x})g(X^{i,X_{im}^{0,x}}(u),X_{im}^{0,x})dB(u)\!\right)dsd\theta\right|\\
=&\left|\int_0^1\!\!\int_{t_{im+l}}^{t_{im+l+1}}\!\!\!\int_{t_{im+l}}^s\mathbb{E}\!\left(\mathcal{D}_u\mathcal{D}_sD(\phi\circ X)_{i+1}(k,\theta)\!\cdot\frac{\partial g}{\partial x}(X^{i,X_{im}^{0,x}}(u),X_{im}^{0,x})g(X^{i,X_{im}^{0,x}}(u),X_{im}^{0,x})\!\right)dudsd\theta\right|\\
\leq&\int_0^1\!\!\int_{t_{im+l}}^{t_{im+l+1}}\!\!\!\int_{t_{im+l}}^s\mathbb{E}\left|\mathcal{D}_u\mathcal{D}_sD(\phi\circ X)_{i+1}(k,\theta)\!\cdot\frac{\partial g}{\partial x}(X^{i,X_{im}^{0,x}}(u),X_{im}^{0,x})g(X^{i,X_{im}^{0,x}}(u),X_{im}^{0,x})\right|dudsd\theta.
\end{align*}
Taking Malliavin derivative $\mathcal{D}_u$ on $\mathcal{D}_sD(\phi\circ X)_{i+1}(k,\theta)\xi$ yields
\begin{align*}
&\mathcal{D}_u\mathcal{D}_sD(\phi\circ X)_{i+1}(k,\theta)\xi\\
=&\mathcal{D}_u\bigg(\left(\mathcal{D}_sX^{i+1,\theta X_{(i+1)m}^{0,x}+(1-\theta)X^{i,X^{0,x}_{im}}(i+1)}(k)\right)^T\cdot D^2\phi( X^{i+1,\theta X_{(i+1)m}^{0,x}+(1-\theta)X^{i,X^{0,x}_{im}}(i+1)}(k))\\
&\qquad\qquad\qquad\qquad\qquad\qquad\qquad\qquad\qquad\cdot DX^{i+1,\theta X_{(i+1)m}^{0,x}+(1-\theta)X^{i,X^{0,x}_{im}}(i+1)}(k)\xi\\
&+ D\phi( X^{i+1,\theta X_{(i+1)m}^{0,x}+(1-\theta)X^{i,X^{0,x}_{im}}(i+1)}(k))\cdot\mathcal{D}_sDX^{i+1,\theta X_{(i+1)m}^{0,x}+(1-\theta)X^{i,X^{0,x}_{im}}(i+1)}(k)\xi\bigg)\\
=&\left(\mathcal{D}_u\mathcal{D}_sX^{i+1,\theta X_{(i+1)m}^{0,x}+(1-\theta)X^{i,X^{0,x}_{im}}(i+1)}(k)\right)^T\cdot D^2\phi( X^{i+1,\theta X_{(i+1)m}^{0,x}+(1-\theta)X^{i,X^{0,x}_{im}}(i+1)}(k))\\
&\qquad\qquad\qquad\qquad\qquad\qquad\qquad\qquad\qquad\cdot DX^{i+1,\theta X_{(i+1)m}^{0,x}+(1-\theta)X^{i,X^{0,x}_{im}}(i+1)}(k)\xi\\
&+D^3\phi( X^{i+1,\theta X_{(i+1)m}^{0,x}+(1-\theta)X^{i,X^{0,x}_{im}}(i+1)}(k))(DX^{i+1,\theta X_{(i+1)m}^{0,x}+(1-\theta)X^{i,X^{0,x}_{im}}(i+1)}(k)\xi,\\
&\qquad\mathcal{D}_sX^{i+1,\theta X_{(i+1)m}^{0,x}+(1-\theta)X^{i,X^{0,x}_{im}}(i+1)}(k),\mathcal{D}_uX^{i+1,\theta X_{(i+1)m}^{0,x}+(1-\theta)X^{i,X^{0,x}_{im}}(i+1)}(k) )\\
&+\left(\mathcal{D}_sX^{i+1,\theta X_{(i+1)m}^{0,x}+(1-\theta)X^{i,X^{0,x}_{im}}(i+1)}(k)\right)^T\cdot D^2\phi( X^{i+1,\theta X_{(i+1)m}^{0,x}+(1-\theta)X^{i,X^{0,x}_{im}}(i+1)}(k)\\
&\qquad\qquad\qquad\qquad\qquad\qquad\qquad\qquad\qquad\cdot \mathcal{D}_uDX^{i+1,\theta X_{(i+1)m}^{0,x}+(1-\theta)X^{i,X^{0,x}_{im}}(i+1)}(k)\xi\\
&+\left( \mathcal{D}_uX^{i+1,\theta X_{(i+1)m}^{0,x}+(1-\theta)X^{i,X^{0,x}_{im}}(i+1)}(k)\right)^T\cdot D^2\phi( X^{i+1,\theta X_{(i+1)m}^{0,x}+(1-\theta)X^{i,X^{0,x}_{im}}(i+1)}(k))\\
&\qquad\qquad\qquad\qquad\qquad\qquad\qquad\qquad\qquad\cdot\mathcal{D}_sDX^{i+1,\theta X_{(i+1)m}^{0,x}+(1-\theta)X^{i,X^{0,x}_{im}}(i+1)}(k)\xi\\
&+ D\phi( X^{i+1,\theta X_{(i+1)m}^{0,x}+(1-\theta)X^{i,X^{0,x}_{im}}(i+1)}(k))\cdot\mathcal{D}_u\mathcal{D}_sDX^{i+1,\theta X_{(i+1)m}^{0,x}+(1-\theta)X^{i,X^{0,x}_{im}}(i+1)}(k)\xi.
\end{align*}
By the estimates of Lemmas \ref{DX-uniform bounded}-\ref{DuDsDX-uniform bounded}, there exists $\nu_6>0$ such that
$$I_{33}\leq Ce^{-\nu_6(k-i-1)}\delta^2.$$
Combining the estimates of $I_1$, $I_2$ and $I_3$, we conclude that there exists $\nu>0$ such that
$$I_1+I_2+I_3\leq Ce^{-\nu(k-i-1)}\delta^2,$$
which implies that
\begin{equation*}
\begin{split}
\sum_{i=0}^{k-1}\sum_{l=0}^{m-1}\left(I_1+I_2+I_3\right)\leq C\delta\sum_{i=0}^{k-1}e^{-\nu(k-i-1)}=C\delta\frac{e^\nu-e^{-\nu(k-1)}}{e^\nu-1}\leq C\delta.
\end{split}
\end{equation*}
Here $C$ is independent of $k$ and $m\delta=1$ is used.
The proof is completed.
\end{proof}

Up to now, we have proven that the uniform weak convergence order of the BE method is 1. As a consequence, the convergence order between the invariant measures $\pi$ and $\pi^\delta$ is obtained based on the uniqueness of invariant measure, that is
\begin{align*}
&\left|\int_{\mathbb{R}^d}\phi(x)\pi(dx)-\int_{\mathbb{R}^d}\phi(x)\pi^{\delta}(dx)\right|\\
=&\left|\lim_{K\rightarrow \infty}\frac{1}{K}\left(\sum_{k=0}^{K-1}\mathbb{E}\phi(X(k))-\sum_{k=0}^{K-1}\mathbb{E}\phi(Y_k)\right)\right|\\
\leq&\lim_{K\rightarrow \infty}\frac{1}{K}\sum_{k=0}^{K-1}\left|\mathbb{E}\phi(X(k))-\mathbb{E}\phi(Y_k)\right|\\
\leq&C\delta.
\end{align*}
 \section{Numerical Simulations}

In this section, we present two examples to verify the theoretical results.

\textbf{Example 1.} Consider the following $1$-dimensional equation with additive noise
\begin{equation}\label{SDEPCA-linear}
\begin{cases}
dX(t)=(-\theta_1X(t)+\theta_2X([t]))dt+dB(t)\\
X(0)=x,
\end{cases}
\end{equation}
where $\theta_1>0$ and $\theta_2\in\mathbb{R}$. If $t\in[k,k+1)$, $k\in\mathbb{N}$, then the solution of \eqref{SDEPCA-linear} is
\begin{equation}\label{solution}
\begin{split}
X(t)&=X(k)\left(e^{-\theta_1(t-k)}+\frac{\theta_2}{\theta_1}\left(1-e^{-\theta_1(t-k)}\right)\right)+\int_{k}^te^{-\theta_1(t-s)}dB(s).
\end{split}
\end{equation}
It can be seen that the solution obeys Gaussian distribution. And the expectation of the solution is
\begin{equation}\label{expectation-X(t)}
\mathbb{E}X(t)=x\left(\frac{\theta_2}{\theta_1}+\left(1-\frac{\theta_2}{\theta_1}\right) e^{-\theta_1}\right)^k\left(\frac{\theta_2}{\theta_1}+\left(1-\frac{\theta_2}{\theta_1}\right) e^{-\theta_1(t-k)}\right).
\end{equation}
Let $\mu(1)=\frac{\theta_2}{\theta_1}+\left(1-\frac{\theta_2}{\theta_1}\right)e^{-\theta_1}$, $\mu(\{t\})=\frac{\theta_2}{\theta_1}+\left(1-\frac{\theta_2}{\theta_1}\right) e^{-\theta_1(t-[t])}$, $\sigma(1)=\frac{1}{2\theta_1}\left(1-e^{-2\theta_1}\right)$ and $\sigma(\{t\})=\frac{1}{2\theta_1}\left(1-e^{-2\theta_1(t-[t])}\right)$, where $\{t\}$ denotes the fractional part of $t$. Then the variance of the solution is
\begin{equation}\label{variance-X(t)}
Var(X(t))=\left(\frac{1-\mu(1)^{2k}}{1-\mu(1)^2}\sigma(1)\right)\mu(\{t\})^2+\sigma(\{t\}).
\end{equation}
Especially, the expectation and variance of $X(t)$ at the integral time $t=k$ are, respectively,
\begin{equation}\label{expectation-X(k)}
\mathbb{E}X(k)=x\mu(1)^k~\textrm{and}~ Var(X(k))=\frac{1-\mu(1)^{2k}}{1-\mu(1)^2}\sigma(1).
\end{equation}
The sufficient and necessary condition under which $X(k)$ may admit a stationary distribution is
$$ |\mu(1)|<1\Leftrightarrow-\frac{1+e^{-\theta_1}}{1-e^{-\theta_1}}\theta_1<\theta_2<\theta_1.$$

Firstly, we verify that the solution $\{X(t)\}_{t\geq0}$ does not admit a stationary distribution while the chain $\{X(k)\}_{k\in\mathbb{N}}$ does. Let the initial value $x=1$. Fig. \ref{Gassian} shows the expectations and variances of both $X(t)$ and $X(k)$ with three different parameters which satisfy $|\mu(1)|<1$. It can be seen that the variances of the solution $\{X(t)\}_{t\geq0}$ are not convergent as $t$ tends to infinity, though the expectations of $\{X(t)\}_{t\geq0}$ converge to zero. However, both the expectations and variances of the chain $\{X(k)\}_{k\in\mathbb{N}}$ (i.e. the solution of \eqref{SDEPCA-linear} at integer time $t=k$) converge as $k$ goes to infinity. This means that the chain $\{X(k)\}_{k\in\mathbb{N}}$ admits a stationary Gaussian distribution. Comparing Fig. \ref{Gassian} (a) with (b), we observe that the distribution of $\{X(k)\}_{k\in\mathbb{N}}$ converges more rapidly for larger $\theta_1$, which implies that  the convergence rate increases with the increases of dissipativity.

\begin{figure}[htbp]
	\centering
	\subfigure[$\theta_1=3$, $\theta_2=1$]{
		\begin{minipage}[t]{0.31\linewidth}
			\centering
			\includegraphics[scale=0.35]{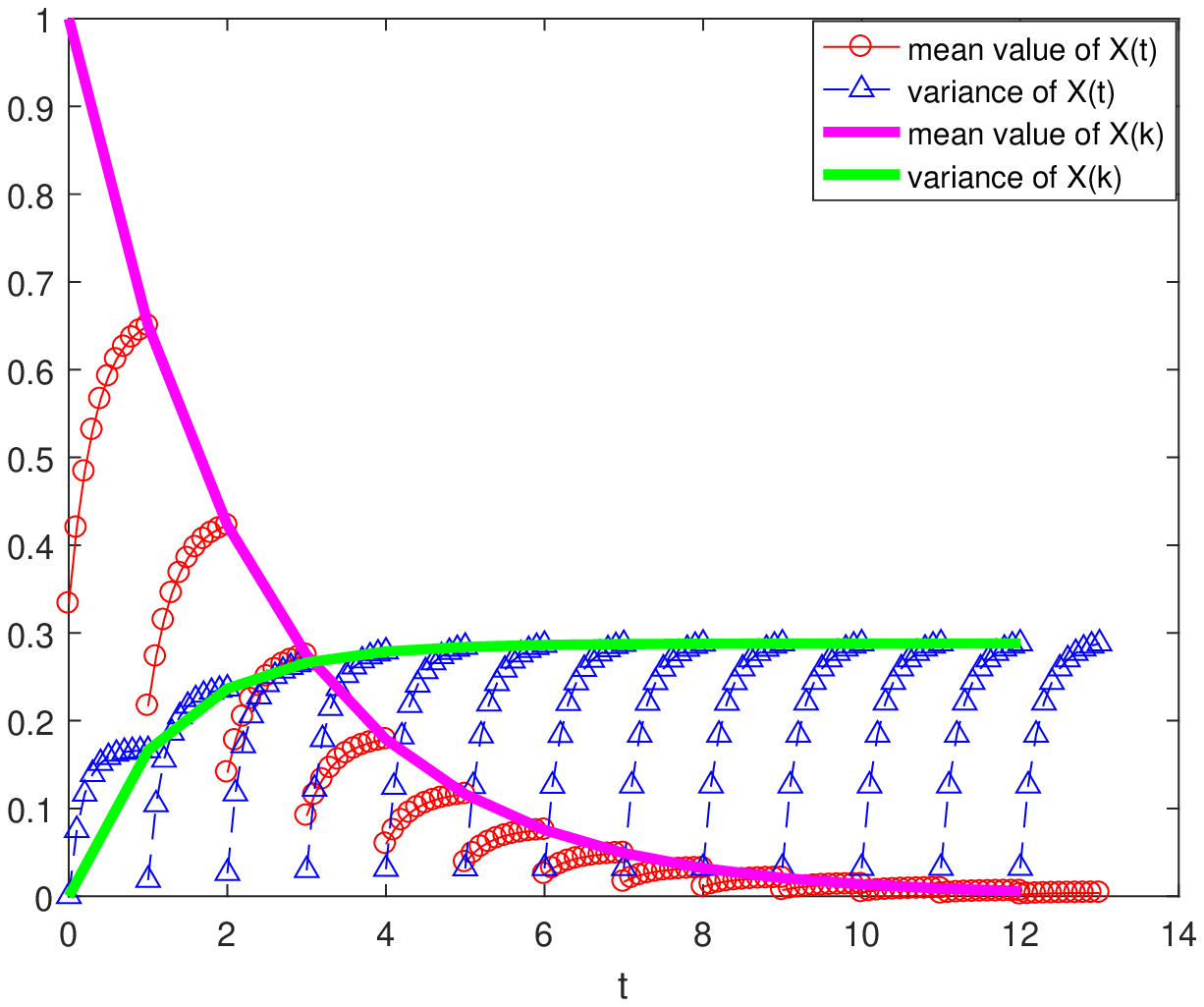}
		\end{minipage}%
	}%
	\subfigure[$\theta_1=2.5$, $\theta_2=1$]{
		\begin{minipage}[t]{0.31\linewidth}
			\centering
			\includegraphics[scale=0.35]{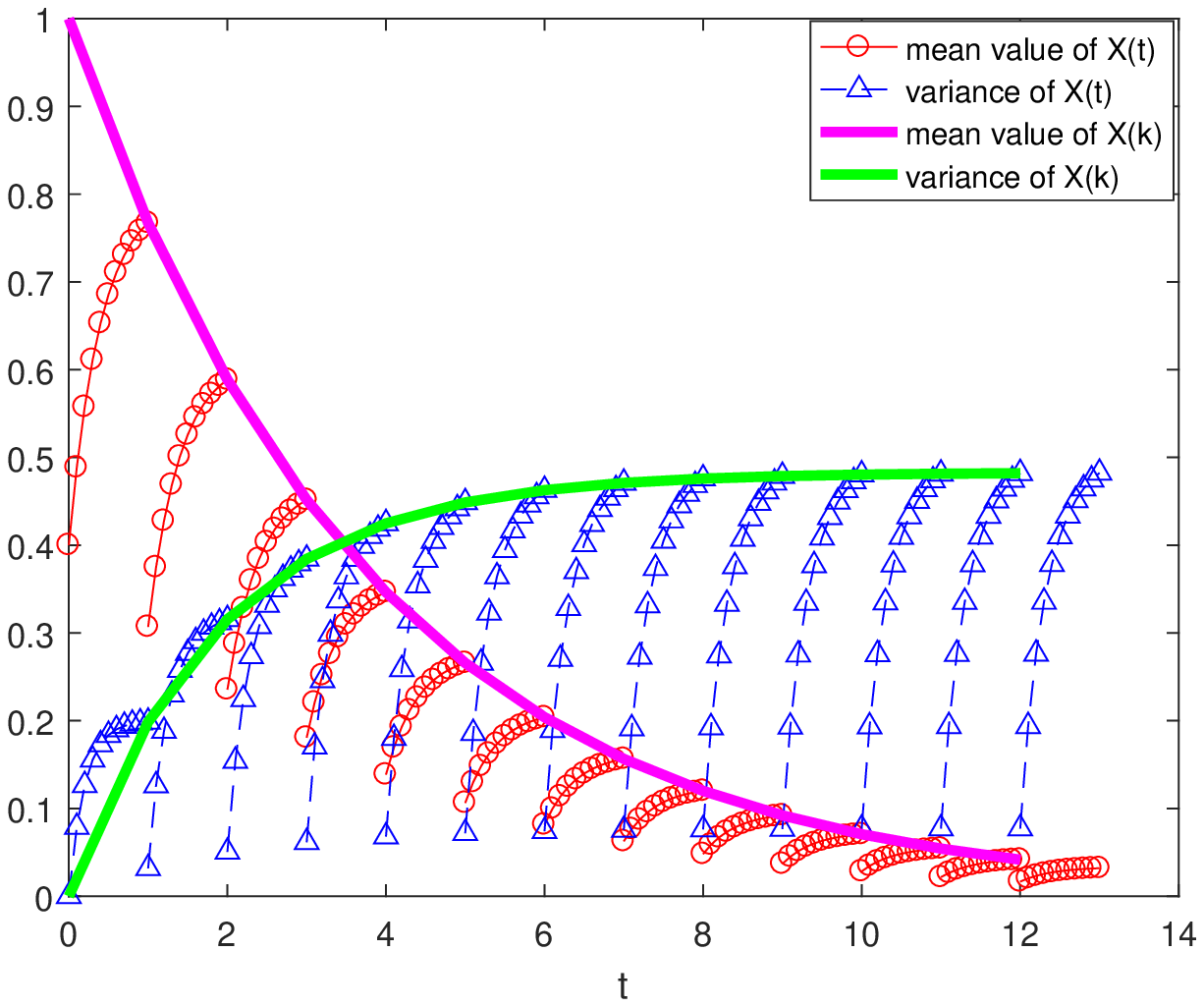}
		\end{minipage}%
	}%
	\subfigure[$\theta_1=2.5$, $\theta_2=-1$]{
		\begin{minipage}[t]{0.31\linewidth}
			\centering
			\includegraphics[scale=0.35]{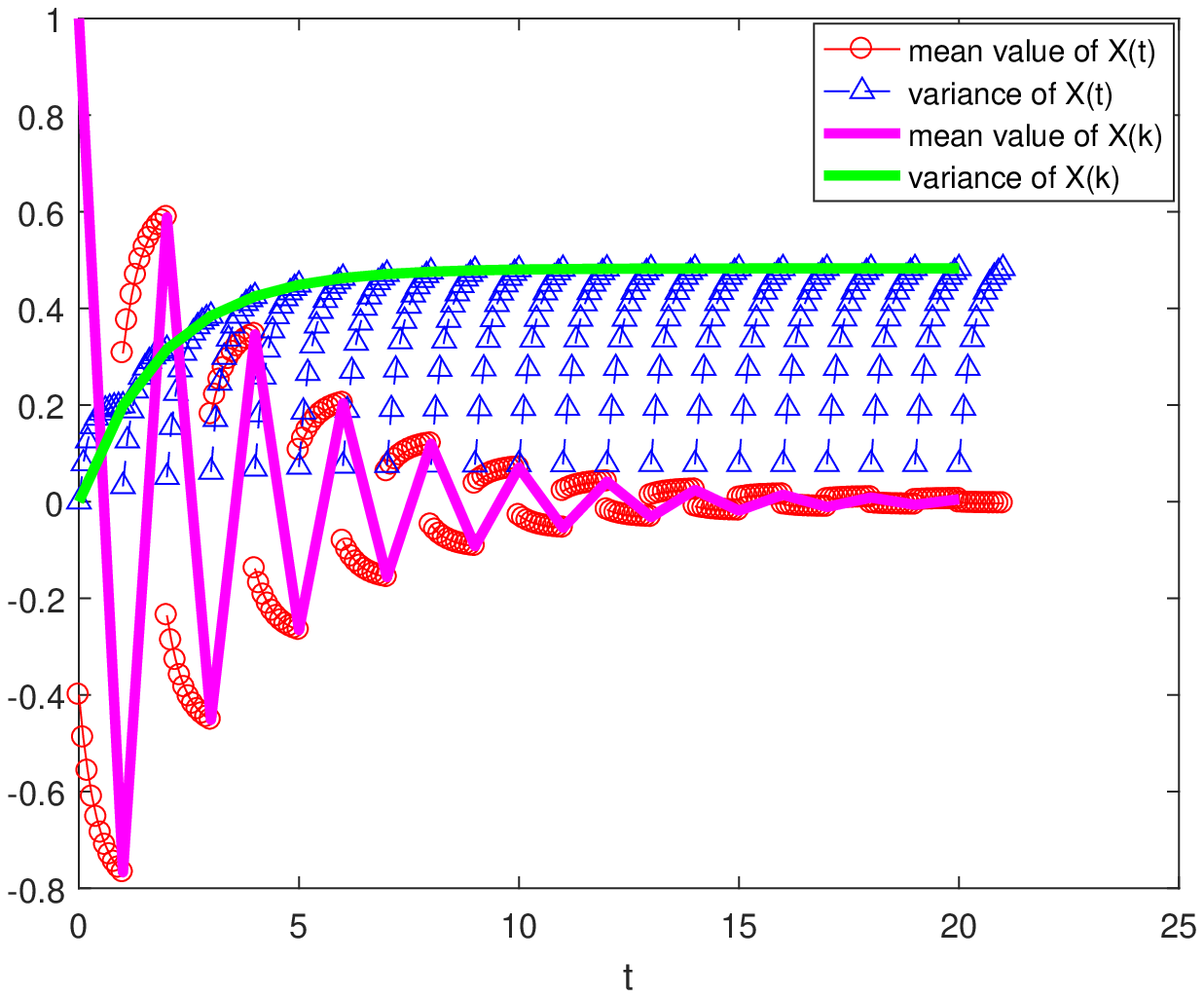}
		\end{minipage}
	}%
	
	\centering
	\caption{The expectations and variances of $X(t)$ and $X(k)$}\label{Gassian}
\end{figure}

\begin{figure}[htbp]
	\centering
	\subfigure[$\theta_1=3$, $\theta_2=1$]{
		\begin{minipage}[t]{0.31\linewidth}
			\centering
			\includegraphics[scale=0.35]{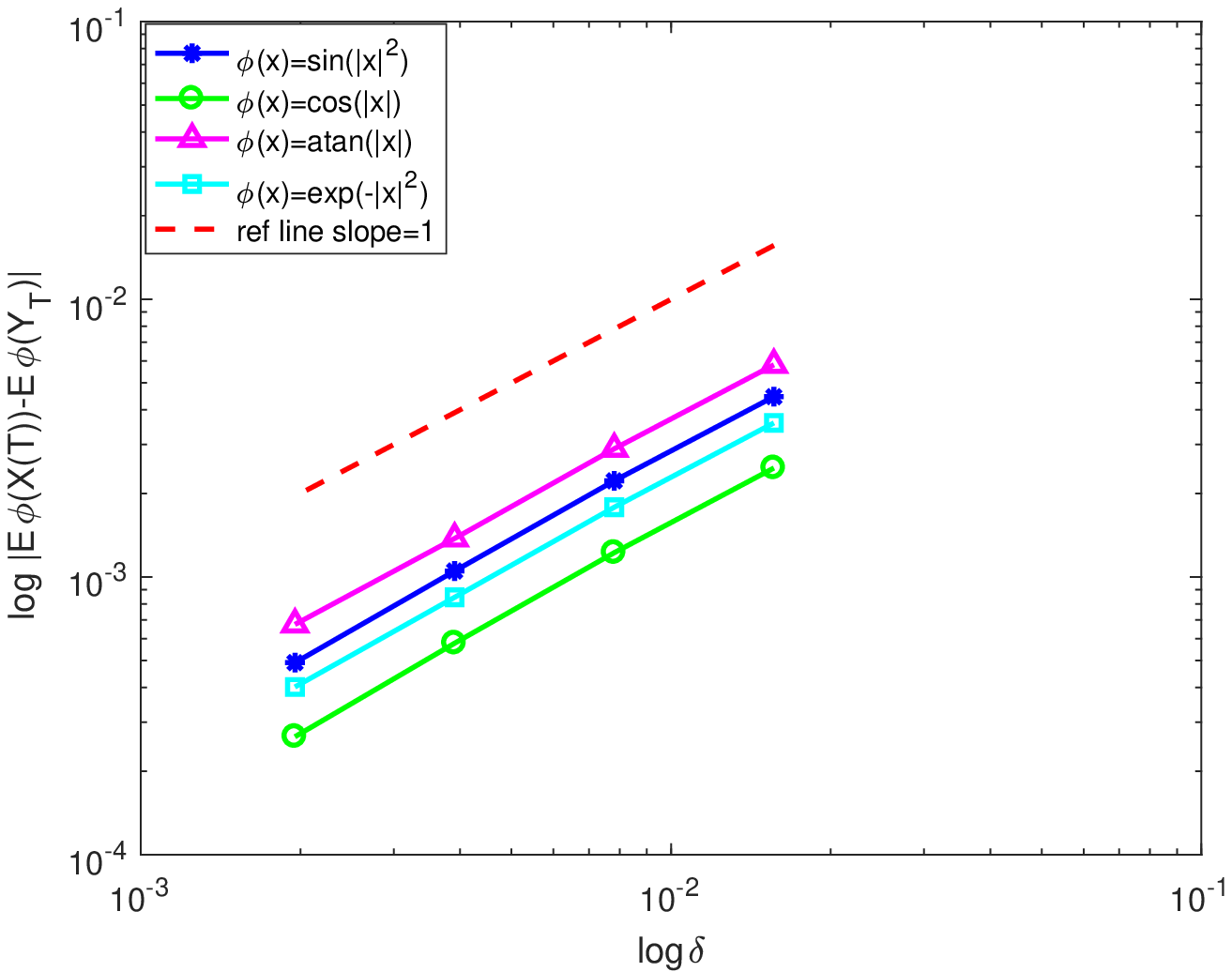}
		\end{minipage}%
	}%
	\subfigure[$\theta_1=2.5$, $\theta_2=1$]{
		\begin{minipage}[t]{0.31\linewidth}
			\centering
			\includegraphics[scale=0.35]{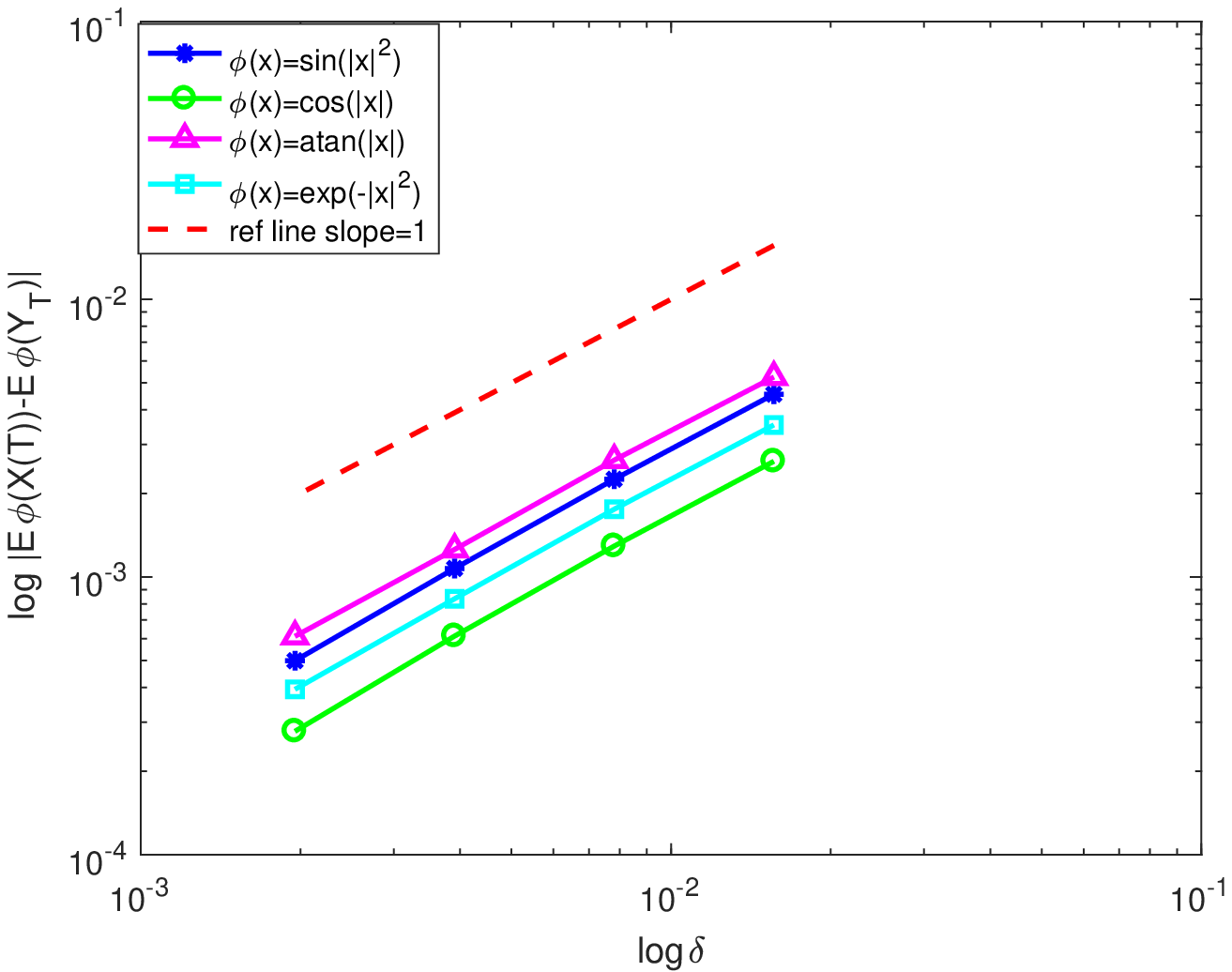}
		\end{minipage}%
	}%
	\subfigure[$\theta_1=2.5$, $\theta_2=-1$]{
		\begin{minipage}[t]{0.31\linewidth}
			\centering
			\includegraphics[scale=0.35]{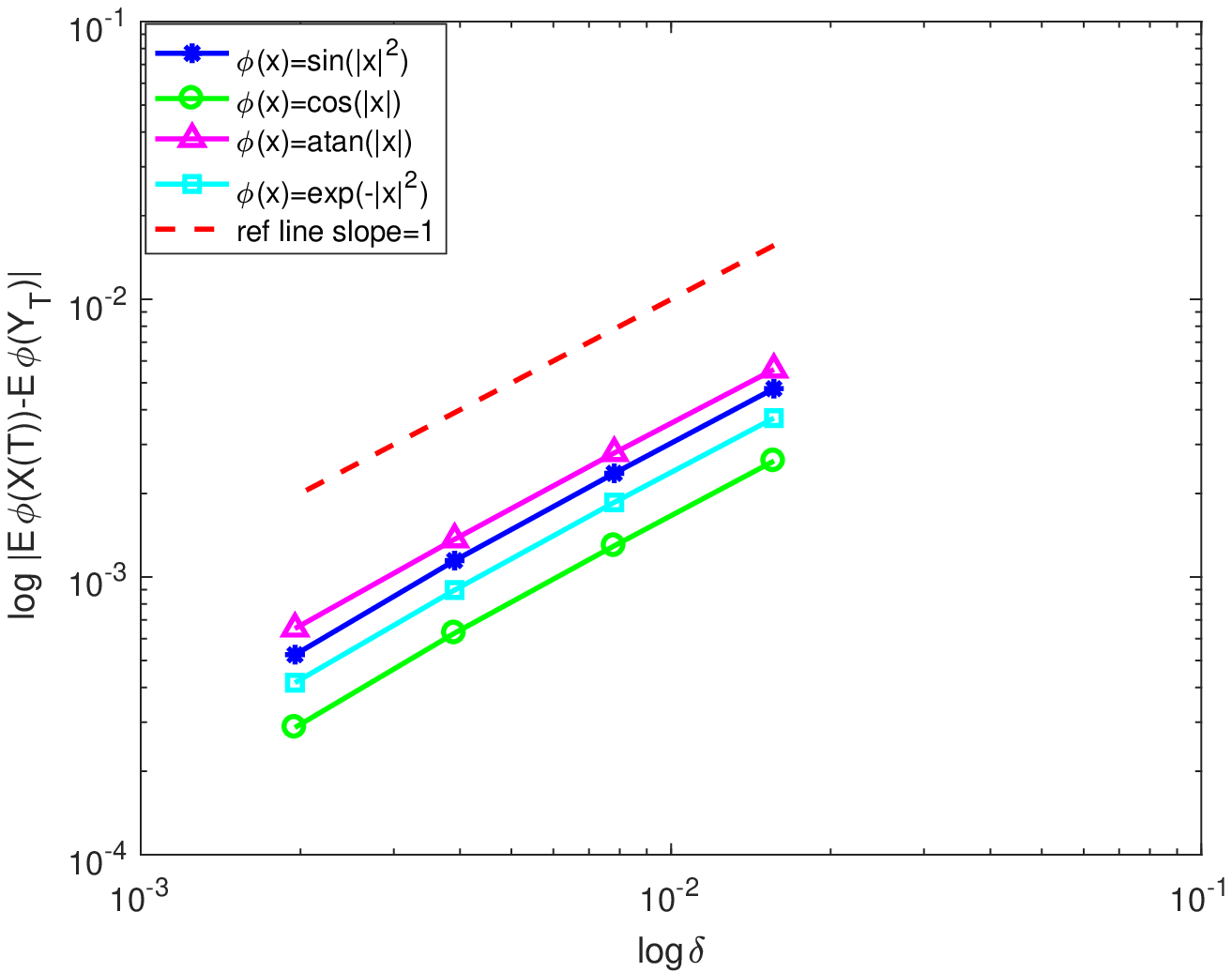}
		\end{minipage}
	}%
	
	\centering
	\caption{Order of weak convergence of BE method }\label{order of weak convergence}
\end{figure}

Next the weak convergence order of the BE method is tested. In fact, the solution of \eqref{SDEPCA-linear} can be expressed as
$$X(t)=x\mu(1)^k\mu(\{t\})+\sum_{i=1}^{k}\mu(\{t\})\mu(1)^i
 \int_{i-1}^ie^{-\theta_1(i-s)}dB(s)+\int_k^te^{-\theta_1(t-s)}dB(s).$$
Let $T=5$. We create 1000 discretized Brownian paths over $[0,T]$ with a small step-size $\bar{\delta}= 2^{-11}$ and
 approximate the stochastic integral in the exact solution above using the Euler method with this small step-size.
 We also compute the numerical solutions of the BE method using 4 different step-sizes $\delta = 2^{-6}, 2^{-7}, 2^{-8}, 2^{-9}$ on the same Brownian path at $T=5$. Moreover, we choose 4 different test functions $\phi(x)=\sin(|x|^2)$, $\phi(x)=\cos(|x|)$, $\phi(x)=\arctan(|x|)$ and $\phi(x)=e^{-|x|^2}$ as the test functions for weak convergence. Fig. \ref{order of weak convergence} plots
 the weak errors $\mathbb{E}|\phi(X(T))-\phi(Y_T)|$ against $\delta$ on a log-log scale, where $X(T)$ and $Y_T$ denote the
 exact and numerical solutions at the endpoint $T$, respectively. The red dashed line represents a reference line with slope 1. From Fig. \ref{order of weak convergence}, it is observed that the BE method is convergent in the weak sense with order 1.

\begin{figure}[htbp]
	\centering
	\subfigure[$\phi=\arctan(|x|)$]{
		\begin{minipage}[t]{0.31\linewidth}
			\centering
			\includegraphics[scale=0.35]{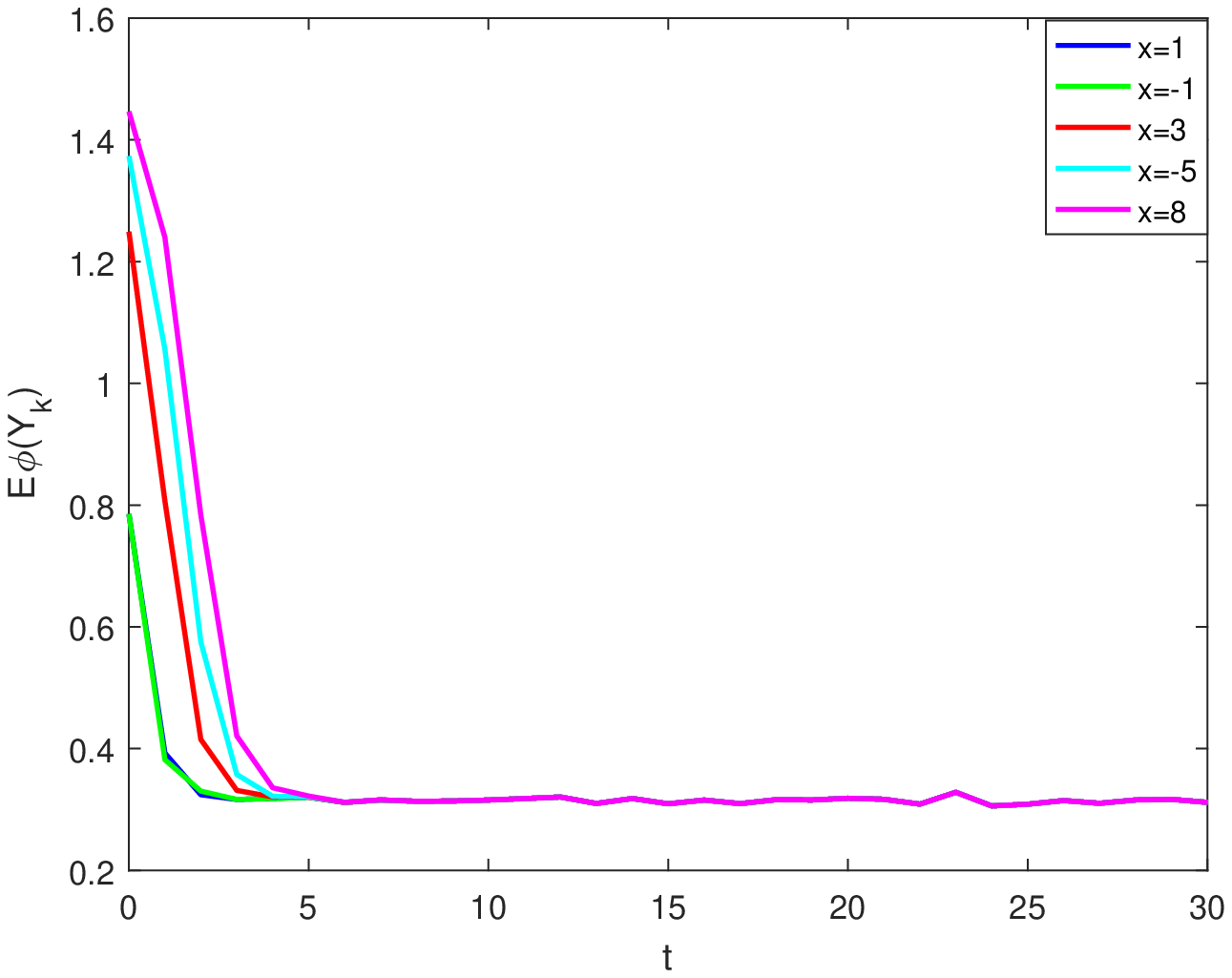}
		\end{minipage}%
	}%
	\subfigure[$\phi=\cos(|x|)$]{
		\begin{minipage}[t]{0.31\linewidth}
			\centering
			\includegraphics[scale=0.35]{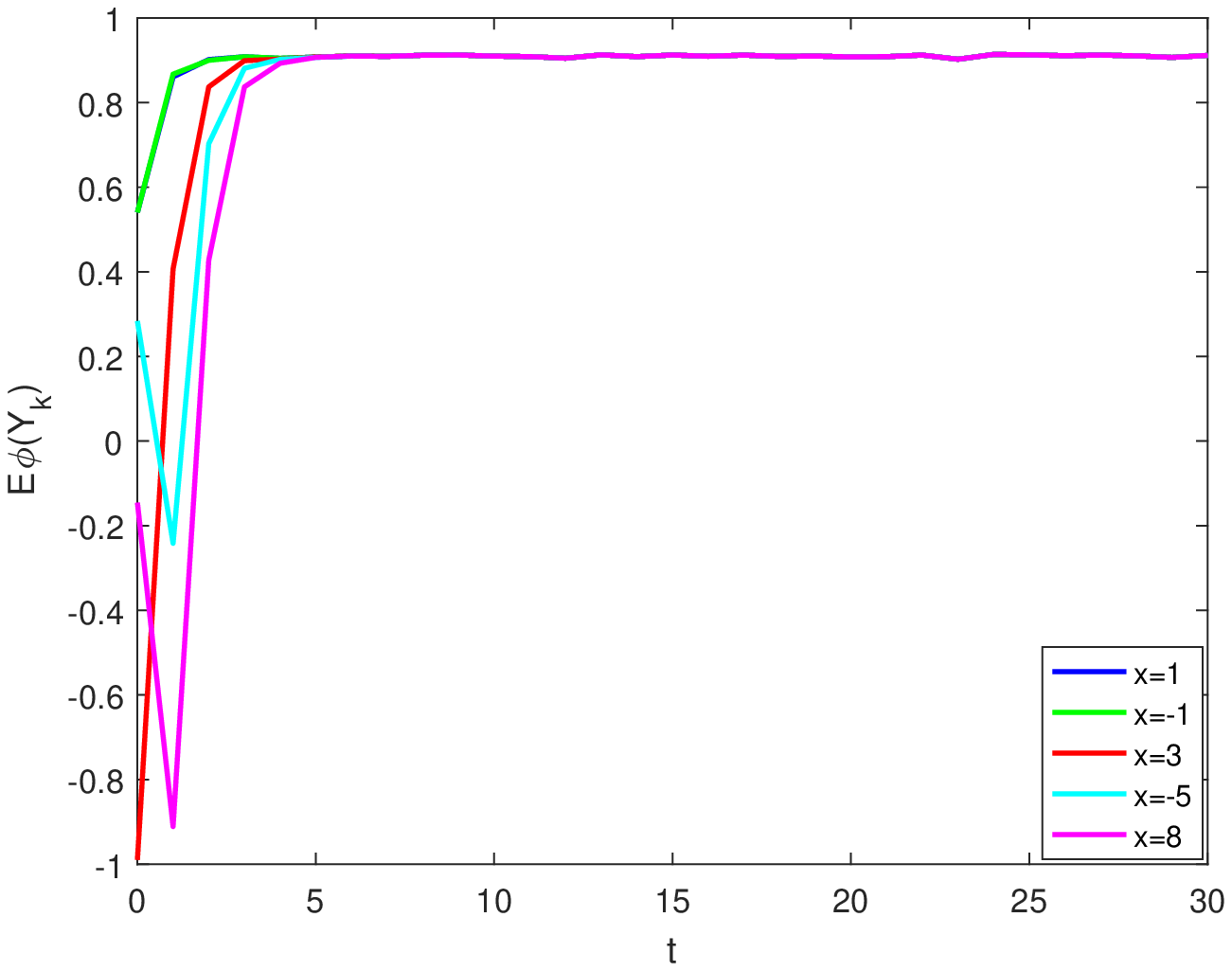}
		\end{minipage}%
	}%
	\subfigure[$\phi=\sin(|x|^2)$]{
		\begin{minipage}[t]{0.31\linewidth}
			\centering
			\includegraphics[scale=0.35]{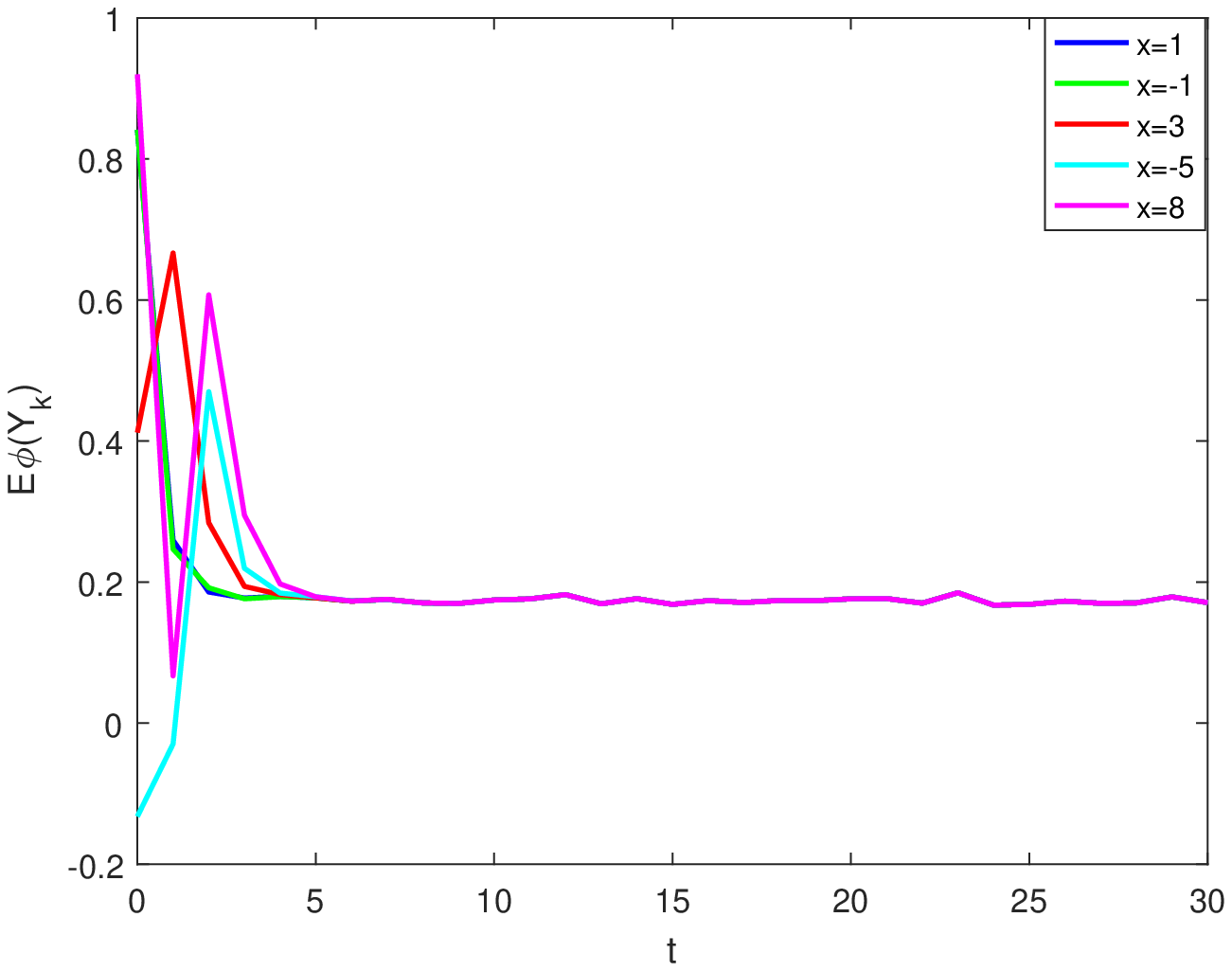}
		\end{minipage}
	}%
	
	\centering
	\caption{The mean values of $Y_k$ with different initial data }\label{mean value of phi}
\end{figure}

Then we consider the longtime behavior of the Markov chain  $\{Y_k\}_{k\in\mathbb{N}}$. Theorem 3.15 shows that $\mathbb{E}\phi(Y_k^{0,x})$ converges exponentially to the ``spatial" average of $\phi$ with different initial data, i.e $Y_k$ is strongly mixing, and this implies the ergodicity of $Y_k$. In this test, we let $\theta_1=3$ and $\theta_2=1$ and choose three test functions (a) $\phi(x)=\arctan(|x|)$, (b) $\phi(x)=\cos(|x|)$ and (c) $\phi(x)=\sin(|x|^2)$ to compute $\mathbb{E}\phi(Y_k^{0,x})$. Fig. \ref{mean value of phi} shows the mean value of $\phi(Y_k^{0,x})$ started from 5 different initial data. As can be seen from the figure, for each $\phi$, $\mathbb{E}\phi(Y_k^{0,x})$ converges exponentially to the same value. 

\textbf{Example 2.} Consider the following 1-dimensional nonlinear SDE with PCAs driven by multiplicative noise
\begin{equation}\label{NL-SDEPCAs}
\begin{cases}
dX(t)=(-X(t)^3-10X(t)+2X([t])+1)dt+(aX(t)+bX([t]))dB(t)\\
X_0=x,
\end{cases}
\end{equation}
where $x=2$ and $a$, $b$ are two parameters. Firstly, we verify the weak convergence of the BE method on a finite time interval $[0,T]$.
Let $T=6$ and we create 2000 discretized Brownian paths over $[0,T]$ with a small step-size $\bar{\delta}= 2^{-11}$. Since the exact solution can not be obtained, we use the numerical solution of the split-step backward Euler method with $\bar{\delta}= 2^{-11}$ as the ``exact solution".
We also compute the numerical solutions of the BE method using 4 different step-sizes $\delta = 2^{-6}, 2^{-7}, 2^{-8}, 2^{-9}$ on the same Brownian path. Let $X(T)$ and $Y_T$ denote the
exact and numerical solutions at the endpoint $T$, respectively.  And three sets of $a,b$ are tested. Fig. \ref{order of weak convergence multiple} plots
the weak errors $\mathbb{E}|\phi(X(T))-\phi(Y_T)|$ against $\delta$ on a log-log scale with 4 different kinds
of test functions $\phi(x)=\sin(|x|^2+\pi/2)$, $\phi(x)=\cos(|x|)$, $\phi(x)=\arctan(|x|^2)$ and $\phi(x)=e^{-|x|^2}$. The red dashed line represents a reference line with slope 1. As can be observed from Fig. \ref{order of weak convergence multiple}, the BE method converges in the weak sense with order 1.

\begin{figure}[htbp]
	\centering
	\subfigure[$a=1$, $b=0$]{
		\begin{minipage}[t]{0.31\linewidth}
			\centering
			\includegraphics[scale=0.35]{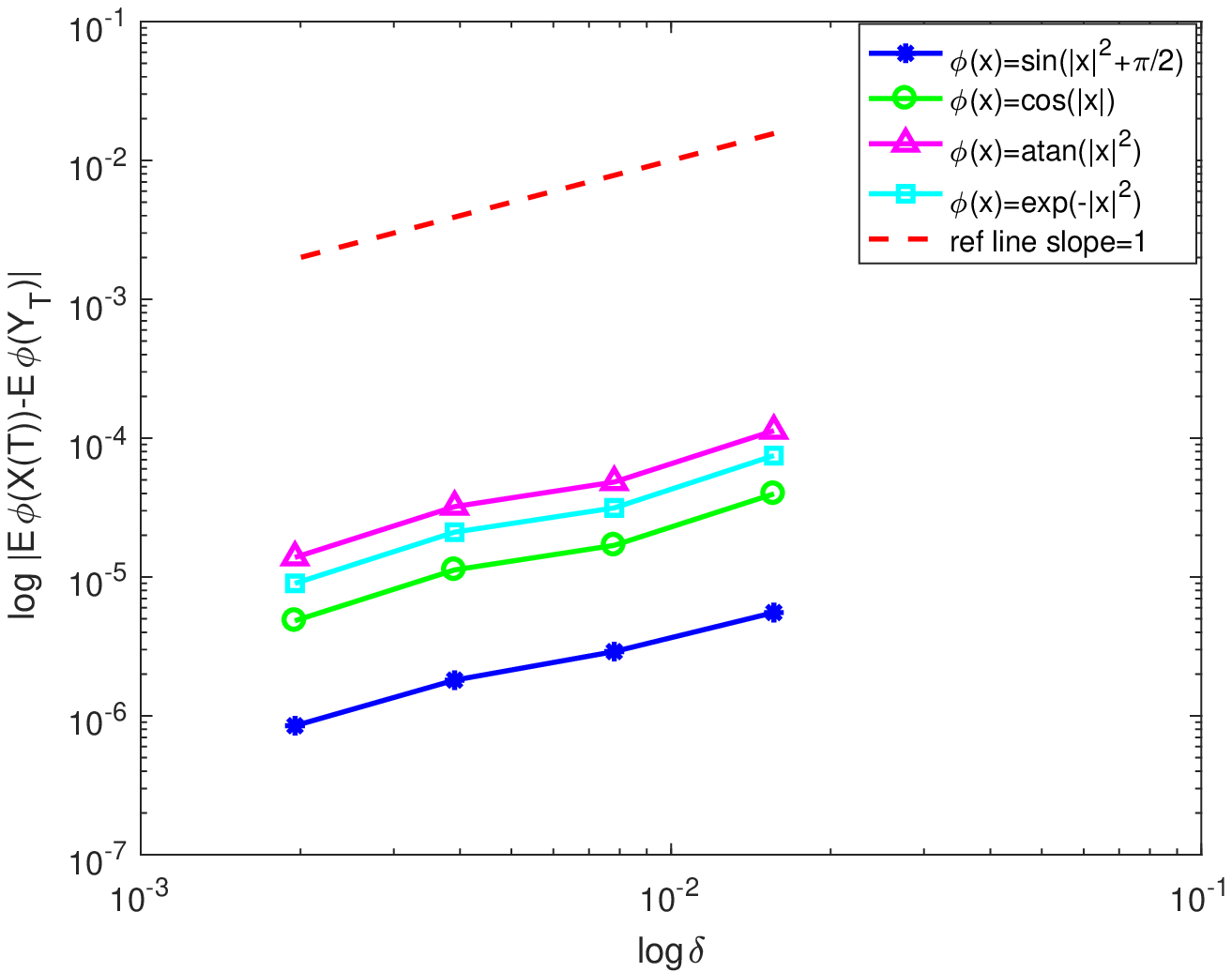}
		\end{minipage}%
	}%
	\subfigure[$a=0$, $b=1$]{
		\begin{minipage}[t]{0.31\linewidth}
			\centering
			\includegraphics[scale=0.35]{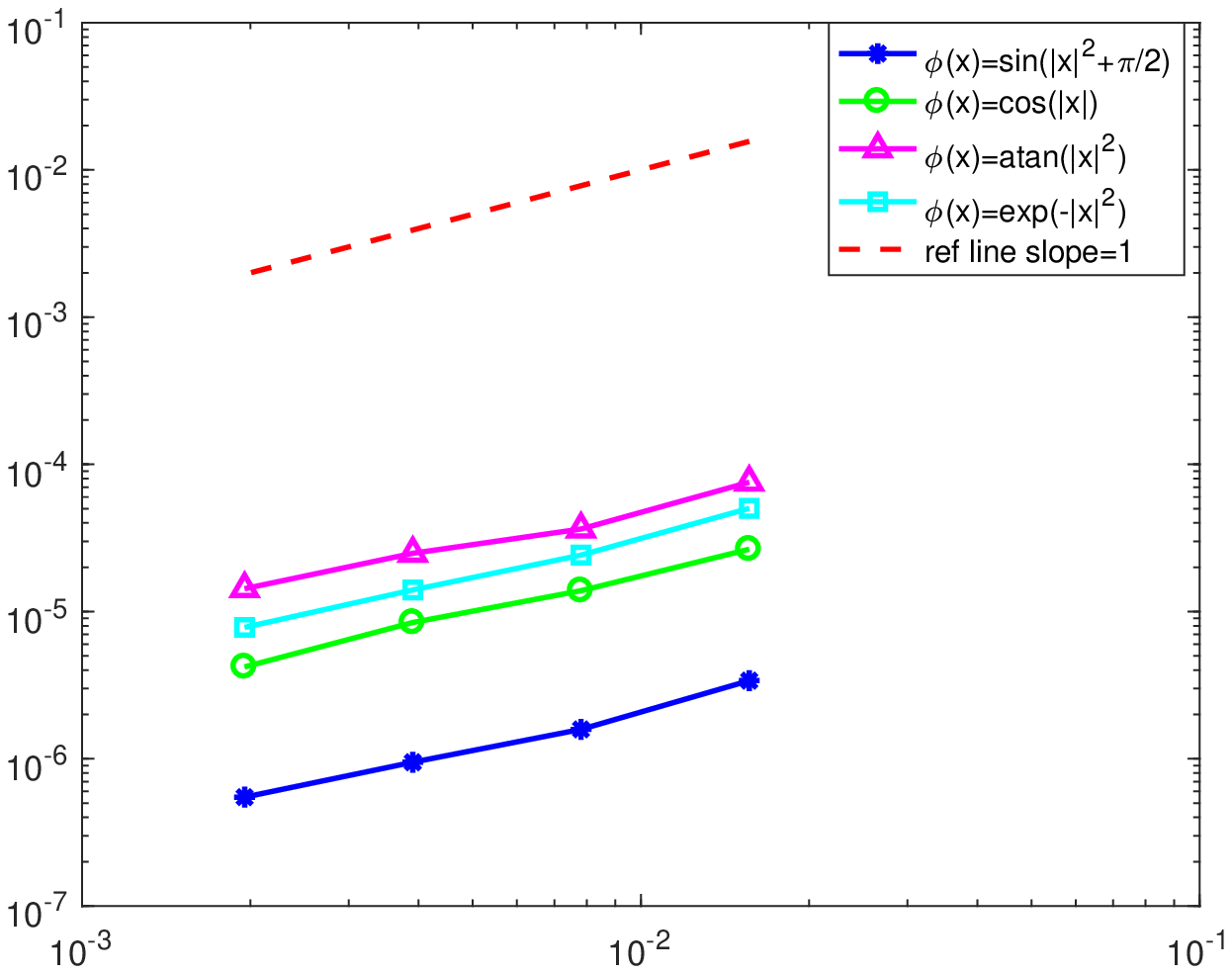}
		\end{minipage}%
	}%
	\subfigure[$a=1$, $b=1$]{
		\begin{minipage}[t]{0.31\linewidth}
			\centering
			\includegraphics[scale=0.35]{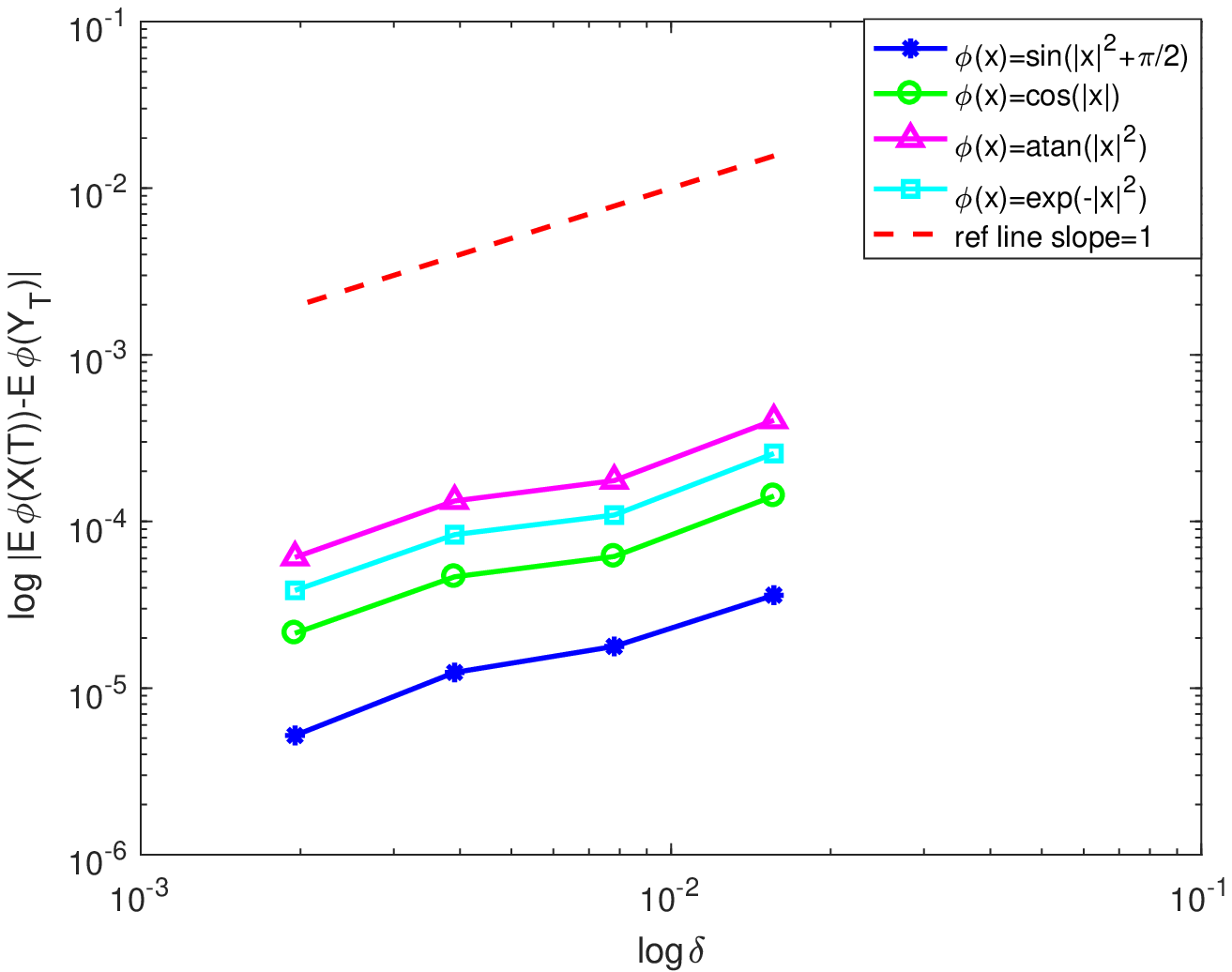}
		\end{minipage}
	}%
	
	\centering
	\caption{Order of weak convergence of BE method }\label{order of weak convergence multiple}
\end{figure}

Finally the longtime behavior of the Markov chain  $\{Y_k\}_{k\in\mathbb{N}}$ is considered. In this simulation, we take $a=1$ and $b=1$ for example and choose three  test functions (a) $\phi(x)=\arctan(|x|)$, (b) $\phi(x)=\sin(|x|^2)$ and (c) $\phi(x)=e^{-|x|^2}$. Fig. \ref{average of phi} plots the mean value of $\phi(Y_k^{0,x})$ started from 5 different initial data. It is observed that, for each $\phi$, $\mathbb{E}\phi(Y_k^{0,x})$ is exponentially convergent as $k$ tends infinity, which verifies the theoretical results.

\begin{figure}[htbp]
	\centering
	\subfigure[$\phi=\arctan(|x|)$]{
		\begin{minipage}[t]{0.31\linewidth}
			\centering
			\includegraphics[scale=0.35]{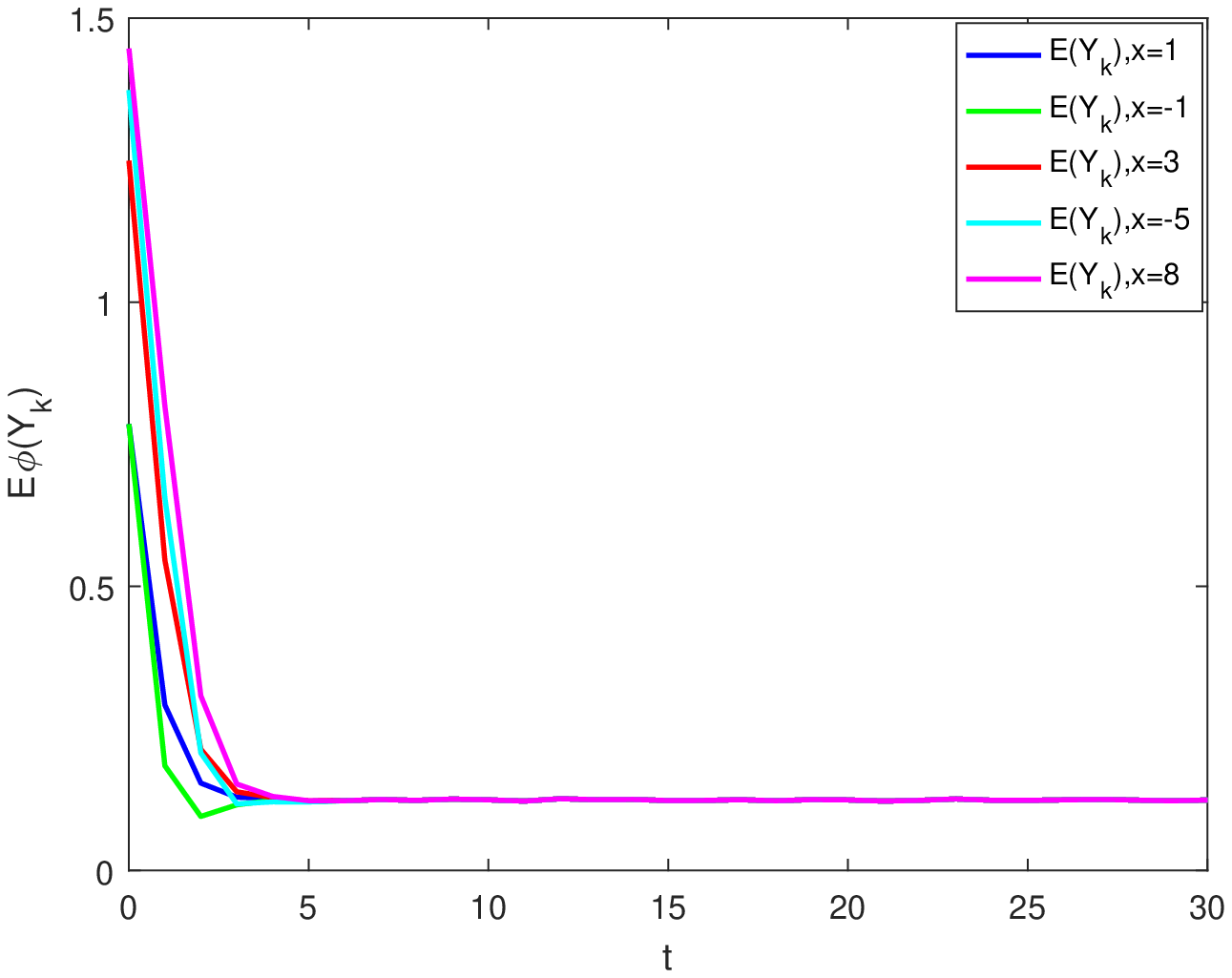}
		\end{minipage}%
	}%
	\subfigure[$\phi=\sin(|x|^2)$]{
		\begin{minipage}[t]{0.31\linewidth}
			\centering
			\includegraphics[scale=0.35]{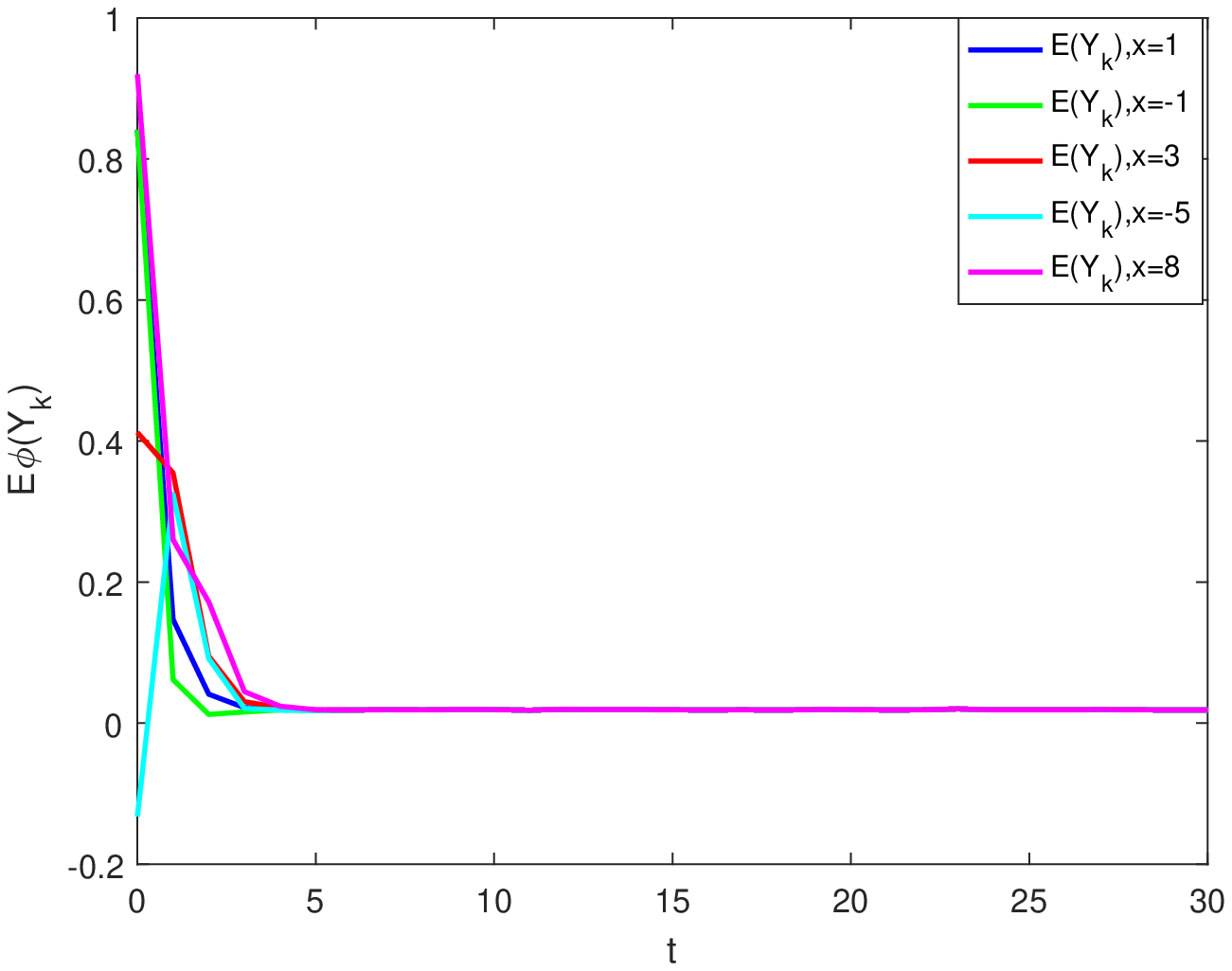}
		\end{minipage}%
	}%
	\subfigure[$\phi=e^{-|x|^2}$]{
		\begin{minipage}[t]{0.31\linewidth}
			\centering
			\includegraphics[scale=0.35]{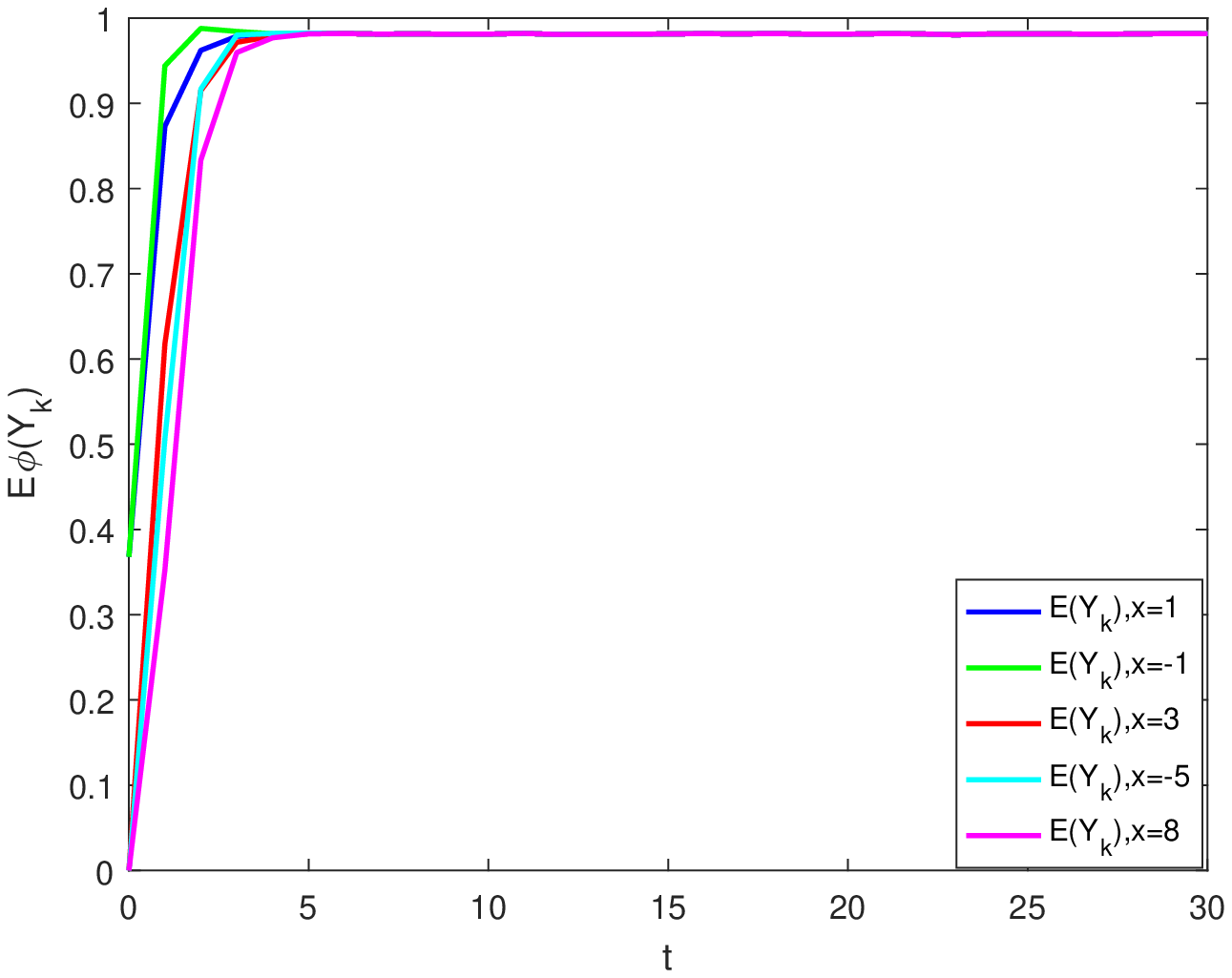}
		\end{minipage}
	}%
	
	\centering
	\caption{The mean values of $Y_k$ with different initial data }\label{average of phi}
\end{figure}


\begin{thebibliography}{99}
\bibitem{Vilmart2014}
A. Abdulle,  G. Vilmart,  K. C. Zygalakis, High order numerical approximation of the invariant measure of
              ergodic SDEs, SIAM J. Numer. Anal., 52(4):1600-1622, 2014.

\bibitem{Bao2014}
J. H. Bao,  G. Yin,  C. G. Yuan, Ergodicity for functional stochastic differential equations
	and applications, Nonlinear Anal., 98:66-82, 2014.

\bibitem{Brehier2014}
C.-E. Br\'{e}hier, Approximation of the invariant measure with an Euler scheme
              for stochastic PDEs driven by space-time white noise, Potential Anal., 40(1):1-40, 2014.

\bibitem{Chen2016}
C. C. Chen, J. L. Hong, X. Wang, Approximation of invariant measure for damped stochastic
	nonlinear Schr\"{o}dinger equation via an ergodic numerical
	scheme, Potential Anal., 46(2):323-367, 2017.


\bibitem{Cui2018}
J. B. Cui, J. L. Hong,  L. Y. Sun, Weak convergence and invariant measure of a full discretization for non-globally Lipschitz parabolic SPDE, arXiv:1811.04075.


\bibitem{Prato}
G. Da Prato, An Introduction to Infinite-dimensional Analysis, Springer-Verlag, Berlin, 2006.

\bibitem{Prato1}
G. Da Prato, J. Zabczyk, Ergodicity for Infinite-dimensional Systems, Cambridge University Press, Cambridge, 1996.
\bibitem{Gusak2010Theory}
D. Gusak, A. Kukush,  A. Kulik,  Y. Mishura,  A. Pilipenko,
Theory of Stochastic Processes: With Applications to Financial Mathematics and Risk Theory, Springer, New York, 2010.

\bibitem{Hong2017}
 J. L. Hong, L. Y. Sun, X. Wang, High order conformal symplectic and ergodic schemes for the
              stochastic Langevin equation via generating functions, SIAM J. Numer. Anal., 55(6):3006-3029, 2017.

\bibitem{Ito}
 K. It\^{o}, M. Nisio, On stationary solutions of a stochastic differential equation, J. Math. Kyoto Univ., 4:1-75, 1964.

\bibitem{Lu2017}
Y. L. Lu, M. H. Song, M. Z. Liu, Convergence and stability of the split-step theta method for
              stochastic differential equations with piecewise continuous
              arguments, J. Comput. Appl. Math., 317:55-71, 2017.

\bibitem{Lu2019}
Y. L. Lu, M. H. Song, M. Z. Liu, Convergence rate and stability of the split-step theta method for stochastic differential equations with piecewise continuous arguments, Discrete Contin. Dyn. Syst. Ser. B, 24(2):695-717, 2019.


\bibitem{Mao2016}
 X. R. Mao, Almost sure exponential stabilization by discrete-time
              stochastic feedback control, IEEE Trans. Automat. Control, 61(6):1619-1624, 2016.
  \bibitem{Mattingly2010}
 J. C. Mattingly,   A. M. Stuart, M. V. Tretyakov, Convergence of numerical time-averaging and stationary
	measures via {P}oisson equations, SIAM J. Numer. Anal., 48(2):552-577, 2010.

 \bibitem{mm}
 M. Milo\v{s}evi\'{c}, The Euler--Maruyama approximation of solutions to
	stochastic differential equations with piecewise constant
	arguments, J. Comput. Appl. Math., 298:1-12, 2016.


              \bibitem{Mohammed1984}
 S.-E. A. Mohammed, Stochastic Functional Differential Equations, Pitman, Boston, 1984.
              \bibitem{Nualart2006}
D. Nualart, The Malliavin Calculus and Related Topics, Springer-Verlag, Berlin, Heidelberg, 2006.			





\bibitem{zhangling}
M. H. Song,  L. Zhang, Numerical solutions of stochastic differential equations with piecewise continuous arguments under Khasminskii-type conditions, J. Appl. Math., 2012:1-21, 2012.


\bibitem{Talay1990}
D. Talay, Second-order discretization schemes of stochastic differential systems for the computation of the invariant law, Rapports de Recherche, Institut National de Recherche en Informatique et en Automatique, 1987.
	



\bibitem{Wiener1993}
 J. Wiener, Generalized Solutions of Functional Differential Equations, World Scientific Publishing Co. Pte. Ltd., 1993.






\bibitem{Xie2019}
Y. Xie,  C. J. Zhang, A class of stochastic one-parameter methods for nonlinear
	SFDEs with piecewise continuous arguments, Appl. Numer. Math., 135:1-14, 2019.


\end{thebibliography}

\end{document}